\newtheorem{theorem}{Theorem}[section]
\newtheorem{lemma}[theorem]{Lemma}
\newtheorem{proposition}[theorem]{Proposition}
\newtheorem{corollary}[theorem]{Corollary}
\newtheorem{remarks}[theorem]{Remarks}
\newtheorem{definition}[theorem]{Definition}
\numberwithin{equation}{section}
\begin{document}

\baselineskip=15.5pt

\title[Skeleton Ideals, Standard Monomials and 
Spherical Parking Functions]{Skeleton Ideals of Certain Graphs, Standard Monomials and 
Spherical Parking Functions}

\author[C. Kumar]{Chanchal Kumar}

\address{IISER Mohali,
Knowledge City, Sector 81, SAS Nagar, Punjab -140 306, India.}

\email{chanchal@iisermohali.ac.in}

\author[G. Lather]{Gargi Lather}

\address{IISER Mohali,
Knowledge City, Sector 81, SAS Nagar, Punjab -140 306, India.}

\email{gargilather@iisermohali.ac.in}

\author[Sonica]{Sonica}

\address{MCM DAV College for Women, Sector- 36 A, Chandigarh
- 160 036, India.}

\email{sonica.anand@gmail.com}
\subjclass[2010]{05E40, 13D02}

\date{}

\begin{abstract}
Let $G$ be an (oriented) graph on the vertex set 
$V = \{ 0, 1,\ldots,n\}$ with root $0$. Postnikov 
and Shapiro associated a monomial ideal $\mathcal{M}_G$
in the polynomial ring $ R = {\mathbb{K}}[x_1,\ldots,x_n]$
over a field $\mathbb{K}$. The standard monomials
of the Artinian quotient $\frac{R}{\mathcal{M}_G}$
correspond bijectively to $G$-parking functions. A subideal
$\mathcal{M}_G^{(k)}$ of $\mathcal{M}_G$ generated by 
subsets of $\widetilde{V}=V\setminus \{0\}$ of 
size at most $k+1$ is called a $k$-{\em skeleton ideal of the graph}
$G$.
Many interesting homological and combinatorial properties
of $1$-skeleton ideal $\mathcal{M}_G^{(1)}$ are obtained by
 Dochtermann for certain classes of simple graph $G$.  A 
 finite sequence $\mathcal{P}=(p_1,\ldots,p_n) \in 
 \mathbb{N}^n$ is called a {\em spherical $G$-parking function}
   if the monomial 
  $\mathbf{x}^{\mathcal{P}} = \prod_{i=1}^{n} x_i^{p_i}
  \in \mathcal{M}_G \setminus \mathcal{M}_G^{(n-2)}$. Let
  ${\rm sPF}(G)$ be the set of all spherical $G$-parking functions.
 On counting the number of 
spherical parking functions of a 
 complete graph $K_{n+1}$ on $V$, in two different
 ways, Dochtermann obtained a new identity for $(n-1)^{n-1}$.
  In this paper, a combinatorial 
  description for all multigraded Betti numbers of the
  $k$-skeleton ideal $\mathcal{M}_{K_{n+1}}^{(k)}$ of
  the complete graph $K_{n+1}$ on $V$ are given. Also, using DFS burning 
  algorithms of Perkinson-Yang-Yu (for simple graph) and 
  Gaydarov-Hopkins (for multigraph), we give a combinatorial
  interpretation of spherical $G$-parking functions for the graph 
$G = K_{n+1}- \{e\}$ obtained from the complete 
graph $K_{n+1}$ on deleting an edge $e$. In particular,
we showed that $|{\rm sPF}(K_{n+1}- \{e_0\} )|= (n-1)^{n-1}$
for an edge $e_0$ through the root $0$, but
 $|{\rm sPF}(K_{n+1} - \{e_1\})| = (n-1)^{n-3}(n-2)^2$ 
for an edge $e_1$  not through the root.
\vspace{0.2cm}

\noindent
{\sc Key words}: Betti numbers, standard monomials,
spherical parking functions.
\end{abstract}
\maketitle
\section{Introduction}
Let $G$ be an oriented graph on the vertex set 
$V = \{0,1,\ldots,n\}$ with a root ${0}$. 
The graph $G$ is completely determined by an 
$ (n+1) \times (n+1)$ matrix $A(G) = [a_{ij}]_{0 \le i,j \le n}$, 
called its {\em adjacency matrix}, where $a_{ij}$ is the
number of oriented edges from $i$ to $j$. A non-oriented
graph $\tilde{G}$ on $V$ (rooted at $0$)
is identified with the unique (rooted) oriented graph $G$
on $V$ with symmetric adjacency matrix $A(\tilde{G})= A(G)$.
Let $R = {\mathbb{K}}[x_1,\ldots,x_n]$ be the standard polynomial
ring in $n$ variables over a field $\mathbb{K}$. The $G$-{\em parking function
ideal} $\mathcal{M}_G$ of $G$ is a monomial ideal in $R$ given by the 
generating set  
\[
\mathcal{M}_G = \left\langle m_A =  \prod_{i \in A} x_i^{d_{A}(i)} : 
\emptyset \neq A  \subseteq [n]=\{1,\ldots,n\} \right\rangle ,
\]
where $d_A(i) = \sum_{j \in V \setminus A} a_{ij} $ is the number of edges
from $i$ to a vertex outside the set $A$ in $G$.
A standard monomial basis $\{ \mathbf{x}^{\mathbf{b}} = 
\prod_{i=1}^n x_i^{b_i} \}$ of the
the Artinian quotient
$\frac{R}{{\mathcal{M}}_G}$ is determined by the set  
$ {\rm PF}(G)=\{ \mathbf{b}=(b_1,\ldots,b_n) \in \mathbb{N}^n : 
\mathbf{x}^{\mathbf{b}} \notin
\mathcal{M}_G\}$ of $G$-parking functions. Further, 
$\dim_{\mathbb{K}}\left(\frac{R}{\mathcal{M}_G} \right)$ is the number 
of (oriented) spanning trees of $G$, given by the determinant
$\det(L_G)$ of the truncated Laplace matrix $L_G$ of $G$.
Let ${\rm SPT}(G)$ be the set of (oriented) spanning trees of $G$.
If $G$ is non-oriented, then the edges of a spanning tree of $G$
is given orientation so that all paths 
in the spanning tree
are directed away from the 
root. In this paper, $G$ is assumed to be a non-oriented graph.
For $G = K_{n+1}$, the complete graph on $V$, the 
$K_{n+1}$-parking functions are the {\em ordinary parking functions} of 
length $n$.
 As $|{\rm PF}(G)|=
|{\rm SPT}(G)|$, one would like to construct an explicit
bijection $\phi : {\rm PF}(G) \longrightarrow {\rm SPT}(G)$.
Using a depth-first search (DFS) version of burning algorithm, 
 an algorithmic bijection 
$\phi : {\rm PF}(G) \longrightarrow {\rm SPT}(G)$ for simple
 graphs $G$ preserving 
{reverse (degree) sum} of $G$-parking function 
$\mathcal{P}$ and the number of
$\kappa$-inversions of the spanning tree $\phi(\mathcal{P})$ is constructed by
Perkinson, Yang and Yu \cite{PYY}. A similar bijection
for multigraphs $G$ is constructed by Gaydarov and Hopkins \cite{GH}.
 
Postnikov and Shapiro \cite{PoSh} introduced the ideal $\mathcal{M}_G$
and derived many of its combinatorial and homological properties. In 
particular, they showed that the minimal resolution of 
$\mathcal{M}_G$
is the cellular resolution supported on the first barycentric subdivision
${\mathbf{Bd}}(\Delta_{n-1})$ of an $n-1$-simplex $\Delta_{n-1}$, provided
$G$ is saturated (i.e., $a_{ij}>0$ for $i \neq j$). The minimal resolution
of $\mathcal{M}_G$ for any graph $G$ is described in \cite{DS,MSW,MoSh}.

Dochtermann \cite{D1,D2} proposed to investigate 
subideals of the $G$-parking function
ideal $\mathcal{M}_G$ described by $k$-dimensional `skeleta'. 
For an integer $k$ ($0 \le k \le n-1$), the
 $k$-{\em skeleton ideal} $\mathcal{M}_G^{(k)}$
of the graph $G$ is defined as the subideal
\[
 \mathcal{M}_G^{(k)} = \left\langle m_A =  \prod_{i \in A} x_i^{d_{A}(i)}
 : \emptyset \ne A \subseteq [n]; |A| \le k+1 \right\rangle
\]
of the monomial ideal $\mathcal{M}_G$. For $k=0$, 
the ideal $\mathcal{M}_G^{(0)}$ is
generated by powers of variables
$x_1,\ldots,x_n$. Hence, its minimal free resolution and the number of
standard monomials can be easily determined. For $k=1$ and $G=K_{n+1}$,
the minimal resolution of one-skeleton ideal 
$\mathcal{M}_{K_{n+1}}^{(1)}$ is a cocellular resolution supported on the 
labelled polyhedral complex induced by any generic arrangement  of two 
tropical hyperplanes in $\mathbb{R}^n$  and 
$i^{th}$ Betti number $\beta_i\left(\frac{R}{\mathcal{M}_{K_{n+1}}^{(1)}}\right)=
\sum_{j=1}^n ~ j {j-1 \choose i-1}$ for $1 \le i \le n-1$
(see \cite{D2}). Also, the number of 
standard monomials of $\frac{R}{\mathcal{M}_{K_{n+1}}^{(1)}}$ is given by
\[
 \dim_{\mathbb{K}}\left(\frac{R}{\mathcal{M}_{K_{n+1}}^{(1)}}\right) 
= (2n-1)(n-1)^{n-1} = \det(Q_{K_{n+1}}),
\]
where $Q_{K_{n+1}}$ is the truncated signless Laplace matrix of $K_{n+1}$.

A notion of spherical $G$-parking functions is introduced in \cite{D1} for 
the complete graph $G = K_{n+1}$. Let ${\rm PF}(G) = \{ \mathcal{P} \in \mathbb{N}^n :
\mathbf{x}^{\mathcal{P}} \notin \mathcal{M}_G\}$ be the set of $G$-parking 
functions. Consider the set ${\rm sPF}(G) = \{ \mathcal{P} \in \mathbb{N}^n : 
\mathbf{x}^{\mathcal{P}} \in \mathcal{M}_G \setminus \mathcal{M}_G^{(n-2)}\}$. 
The standard monomials of $\frac{R}{\mathcal{M}_G^{(n-2)}}$ are 
$\mathbf{x}^{\mathcal{P}}$ for $\mathcal{P} \in {\rm PF}(G)$ or
 $\mathcal{P} \in {\rm sPF}(G)$. Thus, 
$\dim_{\mathbb{K}}\left(\frac{R}{\mathcal{M}_G^{(n-2)}}\right) = 
\dim_{\mathbb{K}}\left(\frac{R}{\mathcal{M}}\right) + 
\dim_{\mathbb{K}}\left(\frac{\mathcal{M}_G}{\mathcal{M}_G^{(n-2)}}\right) = 
|{\rm PF}(G)|+|{\rm sPF}(G)|$. 
A finite sequence $\mathcal{P}=(p_1,\ldots,p_n)
\in {\rm sPF}(G)$ is called a {\em spherical $G$-parking function}.
A $G$-parking or a spherical $G$-parking function
 $\mathcal{P}=(p_1,\ldots,p_n) \in \mathbb{N}^n$ can be equivalently 
thought of as a function $\mathcal{P}: \widetilde{V} \longrightarrow \mathbb{N}$ 
on $\widetilde{V}=V\setminus \{0\}$ with
$\mathcal{P}(i) = p_i \quad (1 \le i \le n)$. 
The {\em sum} (or {\em degree}) of $\mathcal{P}$ is given by
${\rm sum}(\mathcal{P}) = \sum_{i \in \widetilde{V}} \mathcal{P}(i)$.
We recall that a $K_{n+1}$-parking function 
$\mathcal{P}=(p_1, \ldots,p_n) \in \mathbb{N}^n$ is an 
{\em ordinary parking function} of length $n$, i.e.,
a non-decreasing rearrangement $p_{i_1} \le p_{i_2} \le
\ldots \le p_{i_n}$ of $\mathcal{P}=(p_1,\ldots,p_n)$
satisfies $p_{i_j} <j,~\forall j$. It can be easily checked 
that $\mathcal{P}=(p_1,\ldots,p_n) \in \mathbb{N}^n$ is a spherical
$K_{n+1}$-parking function if a non-decreasing rearrangement
$p_{i_1} \le p_{i_2} \le
\ldots \le p_{i_n}$ of $\mathcal{P}=(p_1,\ldots,p_n)$
satisfies $p_{i_1}=1$ and  $p_{i_j} <j$ for $2 \le j \le n$.
The notion of spherical $K_{n+1}$-parking function has appeared earlier 
in the literature (see \cite{St2}) as {\em prime parking functions} 
of length $n$. Prime parking functions were defined and enumerated
by I. Gessel. The number of spherical $K_{n+1}$-parking functions
is $(n-1)^{n-1}$, which is same as the number of {\em uprooted trees}
on $[n]$. A (labelled) rooted tree $T$ on $[n]$ is called {\em uprooted}
if the root is bigger than all its children. Let $\mathcal{U}_n$ be the
set of uprooted trees on $[n]$. Dochtermann \cite{D1} conjectured existence of
a bijection  $\phi : {\rm sPF}(K_{n+1}) \longrightarrow \mathcal{U}_n$ 
such that ${\rm sum}(\mathcal{P}) = 
{n \choose 2} - \kappa(K_n, \phi(\mathcal{P})) +1$, where
$\kappa(K_{n}, \phi(\mathcal{P}))$ is the $\kappa$-number 
of the uprooted tree $\phi(\mathcal{P})$ 
in the complete graph $K_n= K_{n+1} - \{0\}$ 
 on $\widetilde{V} = [n]$.

The $k$-skeleton ideal $\mathcal{M}_{K_{n+1}}^{(k)}$ of the complete graph
$K_{n+1}$ can be identified with an Alexander dual of some multipermutohedron ideal.
Let $\mathbf{u}=(u_1,u_2,\ldots,u_n) \in \mathbb{N}^n$ such that
$u_1 \le u_2 \le \ldots \le u_n$. Set $\mathbf{m}=(m_1,\ldots,m_s)$
such that the smallest entry in $\mathbf{u}$
is repeated exactly $m_1$ times, second smallest entry in $\mathbf{u}$ is repeated
exactly 
$m_2$ times, and so on. Then $\sum_{j=1}^s m_j = n$ and $m_j \ge 1$ for all $j$.
In this case, we write $\mathbf{u(\mathbf{m})}$ for $\mathbf{u}$.
Let 
$\mathfrak{S}_n$ be the set of permutations of $[n]$.
For a permutation $\sigma$ of $[n]$, let 
$\mathbf{x}^{\sigma \mathbf{u(\mathbf{m})}} = \prod_{i=1}^n x_i^{u_{\sigma(i)}}$.
The monomial ideal $I(\mathbf{u(\mathbf{m})}) = \langle
\mathbf{x}^{\sigma \mathbf{u}(\mathbf{m})} : \sigma \in \mathfrak{S}_n \rangle$
of $R$ is called a {\em multipermutohedron ideal}. 
If $\mathbf{m}=(1,\ldots,1)\in \mathbb{N}^n$,
then $I(\mathbf{u}({\mathbf{m}}))$ is called a {\em permutohedron ideal}.
 
Let  $\mathbf{u(\mathbf{m})}=
(1,2,\ldots,k,k+1,\ldots,k+1) \in \mathbb{N}^n$, where
$\mathbf{m}=(1,\ldots,1,n-k) \in \mathbb{N}^{k+1}$.
For $1 \le k \le n-1$, the Alexander dual 
$I(\mathbf{u(\mathbf{m})})^{[\mathbf{n}]}$
of multipermutohedron ideal $I(\mathbf{u(\mathbf{m})})$ with respect to
$\mathbf{n}=(n,\ldots,n) \in \mathbb{N}^n$ equals the $k$-skeleton ideal
$\mathcal{M}_{K_{n+1}}^{(k)}$. Thus, the number of standard 
monomials of the Artinian quotient 
$\frac{R}{I(\mathbf{u(\mathbf{m})})^{[\mathbf{n}]}}
= \frac{R}{\mathcal{M}_{K_{n+1}}^{(k)}}$ is given by the number of
$\lambda$-parking functions for 
$\lambda = (n,n-1,\ldots,n-k+1,n-k,\ldots,n-k)\in 
\mathbb{N}^n$ (see \cite{PiSt, PoSh}). 
A description of multigraded Betti numbers
of Alexander duals of multipermutohedron ideals and a 
simple proof of the Steck determinant
formula for the number of $\lambda$-parking functions are given in \cite{AC1}.
We obtain a combinatorial expression for the ${(i-1)}^{th}$ Betti number 
$\beta_{i-1}\left(\mathcal{M}_{K_{n+1}}^{(k)}\right)$ 
(Proposition \ref{Prop1}). In particular, for $n \ge 3$,
we show that
$\beta_{i-1}\left(\mathcal{M}_{K_{n+1}}^{(1)}\right) = 
i {n-1 \choose i+1}$ and 
$ \beta_{i-1}\left(\mathcal{M}_{K_{n+1}}^{(n-2)}\right)$ equals
\[
 \sum_{0 < j_1 < \ldots <j_i < j_{i+1}=n} 
\frac{n!}{j_1! (j_2-j_1)! \cdots (n-j_i)!} + 
\sum_{0 < l_1 < \ldots <l_{i-3} < n-1} 
\frac{n!}{l_1!(l_2-l_1)!\cdots(n-l_{i-3})!}
(n-l_{i-3}-1) .
\]

By a simple modification of DFS algorithm of
Peterson, Yang and Yu \cite{PYY}, we construct an algorithmic bijection
$\phi_n : {\rm sPF}(K_{n+1}) \longrightarrow \mathcal{U}_n$ as conjectured by 
Dochtermann \cite{D1}. Further, we compare spherical parking functions of a
 graph $G$ with that of $G - \{e\}$. If $e$ is an edge of $G$, then 
$G - \{e\}$ is the graph obtained on deleting the edge $e$ from $G$.
We show that $|{\rm sPF}(G)| = |{\rm sPF}(G - \{e_0\})|$ (Lemma \ref{Lem2}), 
where $e_0$
is an edge from the root to another vertex. As an application of this
result, we show that the number of spherical parking functions of a 
complete bipartite graph $K_{m+1,n}$ satisfies
\[
 |{\rm sPF}(K_{m+1,n})| = |{\rm sPF}(K_{n+1,m})|.
\]
We obtain a formula for $|{\rm sPF}(K_{m+1,n})|$ (Proposition \ref{Prop5}),
which is symmetric in $m$ and $n$.
For the complete graph $K_{n+1}$ and an edge $e_1$ not through the root, we show 
that $|{\rm sPF}(K_{n+1} - \{e_1\}) | = (n-1)^{n-3} (n-2)^2$ (Theorem \ref{Thm6}).

 Some extensions
of these results for the complete multigraph $K_{n+1}^{a,b}$ are also obtained.

\section{$k$-skeleton ideals of complete graphs}

Let $0 \le k \le n-1$. Consider the $k$-skeleton ideal 
$\mathcal{M}_{K_{n+1}}^{(k)}$ of the complete graph $K_{n+1}$ on the
vertex set $V=\{0,1,\ldots,n\}$. As stated in the Introduction, we have
\[
 \mathcal{M}_{K_{n+1}}^{(k)} = 
\left\langle \left( \prod_{j \in A} x_j \right)^{n-|A|+1} : 
\emptyset \ne A \subseteq [n]; ~ |A| \le k+1 \right\rangle.
\]
For $k=0$, $\mathcal{M}_{K_{n+1}}^{(0)} = \langle x_1^n,\ldots,x_n^n \rangle$ is a 
monomial ideal in $R$ generated by $n^{th}$ power of variables. 
Thus, its minimal free resolution
is given by the Koszul complex associated to the regular 
sequence $x_1^n, \ldots, x_n^n$ in $R$. Also,
$\dim_{\mathbb{K}}\left(\frac{R}{\mathcal{M}_{K_{n+1}}^{(0)}}\right) = n^n$. 
For $k = n-1$, 
$\mathcal{M}_{K_{n+1}}^{(n-1)} = \mathcal{M}_{K_{n+1}}$. The minimal 
free resolution of the $K_{n+1}$-parking function ideal 
$\mathcal{M}_{K_{n+1}}$ is the cellular resolution 
supported on the first barycentric subdivision $\mathbf{Bd}(\Delta_{n-1})$ of an
$n-1$-simplex $\Delta_{n-1}$ and 
$\dim_{\mathbb{K}}\left(\frac{R}{\mathcal{M}_{K_{n+1}}}\right) = 
|{\rm PF}(K_{n+1})|=|{\rm SPT}(K_{n+1})|=(n+1)^{n-1}$.
For $k=1$, the $1$-skeleton ideal $\mathcal{M}_{K_{n+1}}^{(1)}$ has 
a minimal cocellular resolution supported on the labelled polyhedral
complex induced by any generic arrangement of two tropical hyperplanes
in $\mathbb{R}^{n-1}$ (see Theorem 4.6 of \cite{D2})  and 
$\dim_{\mathbb{K}}\left(\frac{R}{\mathcal{M}_{K_{n+1}}^{(1)}}\right) = 
(2n-1)(n-1)^{n-1}$.

\noindent
{\bf{Betti numbers of  $\mathcal{M}_{K_{n+1}}^{(k)}$}} :
We now express the $k$-skeleton ideal 
$\mathcal{M}_{K_{n+1}}^{(k)}$ of $K_{n+1}$ as an Alexander dual
of a multipermutohedron ideal. Let $\mathbf{u}(\mathbf{m}) = 
(1,2,\ldots,k,k+1,\ldots,k+1) \in 
\mathbb{N}^{n}$, where $\mathbf{m}=(1,\ldots,1,n-k) \in 
\mathbf{\mathbb{N}}^{k+1}$.
For $k=0$, $\mathbf{u}(\mathbf{m}) = (1,\ldots,1) \in 
\mathbb{N}^{n}$, while  for $k=n-1$,
$\mathbf{u}(\mathbf{m}) = (1,2,\ldots,n) \in 
\mathbb{N}^{n}$. 
Let $I(\mathbf{u}(\mathbf{m}))^{[\mathbf{n}]}$ be the Alexander dual 
of the multipermutohedron ideal $I(\mathbf{u}(\mathbf{m}))$ with respect to 
$[\mathbf{n}] = (n,\ldots,n) \in \mathbf{N}^n$.
\begin{lemma}
 For $0 \le k \le n-1$, $\mathcal{M}_{K_{n+1}}^{(k)} = 
I(\mathbf{u}(\mathbf{m}))^{[\mathbf{n}]}$.
\label{Lem1}
\end{lemma}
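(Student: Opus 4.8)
The plan is to compute the Alexander dual $I(\mathbf{u}(\mathbf{m}))^{[\mathbf{n}]}$ explicitly as an intersection of irreducible monomial ideals, and then to prove equality with $\mathcal{M}_{K_{n+1}}^{(k)}$ by showing the two monomial ideals contain exactly the same monomials (equivalently, have the same standard monomials). First I would unwind the definition of the Alexander dual. Every generator $\mathbf{x}^{\sigma\mathbf{u}(\mathbf{m})}=\prod_{i}x_i^{u_{\sigma(i)}}$ of $I(\mathbf{u}(\mathbf{m}))$ has all exponents positive, so, writing $\mathfrak{m}^{\mathbf{w}}=\langle x_i^{w_i}:1\le i\le n\rangle$ for the irreducible ideal attached to a positive vector $\mathbf{w}$, the dual is $I(\mathbf{u}(\mathbf{m}))^{[\mathbf{n}]}=\bigcap_{\sigma\in\mathfrak{S}_n}\mathfrak{m}^{\mathbf{w}_\sigma}$, where $(\mathbf{w}_\sigma)_i=n+1-u_{\sigma(i)}$. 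As $\sigma$ ranges over $\mathfrak{S}_n$, the vector $\mathbf{w}_\sigma$ ranges over all rearrangements of $\lambda=(n,n-1,\ldots,n-k+1,n-k,\ldots,n-k)$, in which $n,n-1,\ldots,n-k+1$ each occur once and $n-k$ occurs $n-k$ times; I write $\lambda_1\ge\cdots\ge\lambda_n$ for its decreasing rearrangement, so $\lambda_j=n+1-j$ for $1\le j\le k$ and $\lambda_j=n-k$ for $k+1\le j\le n$.

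For the inclusion $\mathcal{M}_{K_{n+1}}^{(k)}\subseteq I(\mathbf{u}(\mathbf{m}))^{[\mathbf{n}]}$ I would verify that each generator $m_A=(\prod_{j\in A}x_j)^{n+1-|A|}$ with $1\le|A|\le k+1$ lies in every $\mathfrak{m}^{\mathbf{w}_\sigma}$. Setting $s=|A|$, the entries of $\lambda$ that strictly exceed $n+1-s$ are exactly $n,n-1,\ldots,n-s+2$, that is $s-1$ entries counted with multiplicity (here $s\le k+1$ is exactly what forces the repeated value $n-k$ to satisfy $n-k\le n+1-s$, so it is never among them). Since $A$ has $s>s-1$ coordinates, for each $\sigma$ some $j\in A$ receives a value $(\mathbf{w}_\sigma)_j\le n+1-s$ by pigeonhole; as the exponent of $x_j$ in $m_A$ equals $n+1-s$, this gives $m_A\in\mathfrak{m}^{\mathbf{w}_\sigma}$, and intersecting over $\sigma$ yields $m_A\in I(\mathbf{u}(\mathbf{m}))^{[\mathbf{n}]}$.

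The reverse inclusion is more delicate, and I would obtain it by matching standard monomials. A monomial $\mathbf{x}^{\mathbf{c}}$ fails to lie in $\bigcap_\sigma\mathfrak{m}^{\mathbf{w}_\sigma}$ precisely when some single $\sigma$ satisfies $c_i<(\mathbf{w}_\sigma)_i$ for all $i$, i.e.\ when the multiset of values of $\lambda$ can be assigned bijectively to the coordinates so that each coordinate receives a value strictly larger than its exponent. By a greedy/Hall matching argument (pair the largest remaining value with the largest remaining exponent), such an assignment exists if and only if $c_{(j)}\le\lambda_j-1$ for all $j$, where $c_{(1)}\ge\cdots\ge c_{(n)}$ is the decreasing rearrangement of $\mathbf{c}$; that is, $c_{(j)}\le n-j$ for $1\le j\le k$ and $c_{(j)}\le n-k-1$ for $k+1\le j\le n$. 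On the other side, $\mathbf{x}^{\mathbf{c}}\in\mathcal{M}_{K_{n+1}}^{(k)}$ holds iff for some $s\in\{1,\ldots,k+1\}$ there are at least $s$ coordinates with exponent $\ge n+1-s$, i.e.\ iff $c_{(s)}\ge n+1-s$ for some such $s$; hence $\mathbf{x}^{\mathbf{c}}$ is standard for $\mathcal{M}_{K_{n+1}}^{(k)}$ iff $c_{(s)}\le n-s$ for all $s\in\{1,\ldots,k+1\}$. It remains to check that these two sorted conditions coincide: they are identical for $1\le j\le k$, and for $j\ge k+1$ the monotonicity $c_{(j)}\le c_{(k+1)}$ combined with the instance $s=k+1$ (which reads $c_{(k+1)}\le n-k-1$) shows each condition implies the other. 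Therefore the two ideals have the same standard monomials and so are equal.

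I expect the matching step to be the main obstacle: the equivalence between ``$\mathbf{x}^{\mathbf{c}}$ lies outside the intersection'' and the sorted inequalities $c_{(j)}\le\lambda_j-1$ requires the correct Hall-type bookkeeping, and the strict inequalities together with the repeated value $n-k$ (occurring $n-k$ times) must be tracked carefully. Once both standard monomial sets are expressed in sorted (majorization-type) form, their equivalence is immediate from the monotonicity of $c_{(1)}\ge\cdots\ge c_{(n)}$, so the remaining work is genuinely just this matching lemma together with the two bookkeeping translations.
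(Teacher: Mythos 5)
Your argument is correct, but it takes a genuinely different route from the paper: the paper's entire proof is a two-line citation (Proposition 5.23 of Miller--Sturmfels to write the Alexander dual as an intersection of irreducible ideals, then Lemma 2.3 of Kumar--Kumar, which already identifies that intersection for general multipermutohedron ideals), whereas you re-derive the needed special case from scratch. You implicitly use the same Miller--Sturmfels fact to get $I(\mathbf{u}(\mathbf{m}))^{[\mathbf{n}]}=\bigcap_{\sigma}\mathfrak{m}^{\mathbf{w}_\sigma}$, but then replace the appeal to the general theory of dual $\mathbf{m}$-isolated subsets with a direct membership test: the pigeonhole count for $\mathcal{M}_{K_{n+1}}^{(k)}\subseteq\bigcap_\sigma\mathfrak{m}^{\mathbf{w}_\sigma}$ is right (for $|A|=s\le k+1$ only the $s-1$ values $n,\dots,n-s+2$ of $\lambda$ exceed $n+1-s$, and the repeated value $n-k$ never does), and your sorted-matching lemma is the standard one, with the converse direction following from the observation that at most $j-1$ entries of $\lambda$ strictly exceed $\lambda_j$. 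The resulting characterization of standard monomials as $\lambda$-parking functions for $\lambda=(n,\dots,n-k+1,n-k,\dots,n-k)$ agrees with what the paper asserts after the lemma, and the final comparison of the two sorted conditions (using $c_{(j)}\le c_{(k+1)}$ for $j\ge k+1$) closes the equality. What your approach buys is a self-contained, elementary proof readable without \cite{AC1}; what the paper's citation buys is brevity and the fact that the same machinery then yields the Betti number formulas of Proposition \ref{Prop1}, which your ad hoc computation would not.
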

\begin{proof}
 Using Proposition 5.23 of \cite{MS}, it follows from the Lemma 2.3 of \cite{AC1}. \hfill $\square$
\end{proof}

The multigraded Betti numbers of Alexander duals of a
 multipermutohedron ideal are described in terms of
dual $\mathbf{m}$-isolated subsets 
(see Definition 3.1 and Theorem 3.2 of \cite{AC1}). For the particular
case of $\mathbf{m}= (1,\ldots,1,n-k) \in \mathbb{N}^{k+1}$, the notion of 
dual $\mathbf{m}$-isolated subsets can be easily described. 
Let  $J =\{j_1,\ldots,j_t\} \subseteq
[n]$ be a non-empty subset with $0=j_0 <j_1 < \ldots < j_t$.
\begin{enumerate}
 \item  $J$ is a dual $\mathbf{m}$-{\em isolated subset of type}-1 
if $J \subseteq [k+1]$ and its dual weight
${\rm dwt}(J) = t-1$. Let
$\mathcal{I}_{\mathbf{m}}^{*,1}$
 be the set of dual $\mathbf{m}$-isolated subsets
of type-1 and let $\mathcal{I}_{\mathbf{m}}^{*,1} \langle i \rangle =
\{ J \in \mathcal{I}_{\mathbf{m}}^{*,1} : {\rm dwt}(J) = i \}$.
\item $J =\{j_1,\ldots,j_t\}$  is a dual $\mathbf{m}$-{\em isolated subset 
of type}-2 if $J\setminus \{j_t\} \subseteq [k], ~ k+1 < j_t \le n$ and its dual weight
${\rm dwt}(J) = (t-1)+(j_t - k)$. Let
$\mathcal{I}_{\mathbf{m}}^{*,2}$
 be the set of dual-$\mathbf{m}$ isolated subsets
of type-2 and let $\mathcal{I}_{\mathbf{m}}^{*,2} \langle i \rangle =
\{ J \in \mathcal{I}_{\mathbf{m}}^{*,2} : {\rm dwt}(J) = i \}$.
\end{enumerate}
Let $\mathcal{I}_{\mathbf{m}}^{*} = 
\mathcal{I}_{\mathbf{m}}^{*,1} \coprod \mathcal{I}_{\mathbf{m}}^{*,2}$
be the set of all dual $\mathbf{m}$-isolated subsets and 
$\mathcal{I}_{\mathbf{m}}^{*}\langle i \rangle = 
\mathcal{I}_{\mathbf{m}}^{*,1} \langle i \rangle 
 \coprod \mathcal{I}_{\mathbf{m}}^{*,2} \langle i \rangle$.

Consider $\lambda = (\lambda_1, \lambda_2, \ldots, \lambda_n) $ with 
$\lambda_i = \begin{cases} n-i+1 & {\rm if}~ 1 \le i \le k,\\
             n-k & {\rm if}~k+1 \le i \le n.
            \end{cases} $
Let $e_1,e_2,\ldots,e_n$ be the standard basis vectors of 
$\mathbb{R}^n$. For $0 \le i < j \le n$, 
we set $\varepsilon(i,j) = \sum_{l = i+1}^j e_l$. For any
 $J = \{j_1,\ldots,j_t\} \in \mathcal{I}_{\mathbf{m}}^{*}$,
let $\mathbf{b}(J) = \sum_{\alpha= 1}^t 
\lambda_{j_{\alpha}} ~ \varepsilon(j_{\alpha-1}, j_{\alpha}) \in \mathbb{N}^n$.

\begin{proposition}
 For $\mathbf{b}=(b_1,\ldots,b_n) \in \mathbb{N}^n$ and 
$ 1 \le i \le n-1$, let 
$\beta_{i-1,\mathbf{b}}\left(\mathcal{M}_{K_{n+1}}^{(k)}\right)$ 
be an ${(i-1)}^{th}$ 
multigraded Betti number of $\mathcal{M}_{K_{n+1}}^{(k)}$
 in degree $\mathbf{b}$. Then
the following statements hold.
\begin{enumerate}
 \item[{\rm (i)}] For $J=\{j_1,\ldots,j_t\} 
\in \mathcal{I}_{\mathbf{m}}^{*,1}\langle i-1 \rangle$, 
 $\beta_{i-1,\mathbf{b}(J)}\left(\mathcal{M}_{K_{n+1}}^{(k)}\right) = 1$,
 where $t=i$.
\item[{\rm (ii)}] For 
$J =\{ j_1,\ldots,j_t\} \in \mathcal{I}_{\mathbf{m}}^{*,2}\langle i-1 \rangle$, 
 $\beta_{i-1,\mathbf{b}(J)}\left(\mathcal{M}_{K_{n+1}}^{(k)}\right) = 
{j_t-j_{t-1}-1 \choose k-j_{t-1}}$, where $t+j_t-k=i$. 
\item[{\rm (iii)}] If $\mathbf{b}= \pi \mathbf{b}(J)$ is a
 permutation of $\mathbf{b}(J)$ for
some $J \in \mathcal{I}_{\mathbf{m}}^{*} \langle i-1 \rangle$ 
 and some $\pi \in \mathfrak{S}_n$,
then $\beta_{i-1,\mathbf{b}}\left(\mathcal{M}_{K_{n+1}}^{(k)}\right) = 
\beta_{i-1,\mathbf{b}(J)}\left(\mathcal{M}_{K_{n+1}}^{(k)}\right)$. Otherwise,
$\beta_{i-1,\mathbf{b}}\left(\mathcal{M}_{K_{n+1}}^{(k)}\right) = 0$.
\item[{\rm (iv)}] The ${(i-1)}^{th}$-Betti number 
$\beta_{i-1}\left(\mathcal{M}_{K_{n+1}}^{(k)}\right)$ of 
$\mathcal{M}_{K_{n+1}}^{(k)}$ is given by, 
\[
 \beta_{i-1}\left(\mathcal{M}_{K_{n+1}}^{(k)}\right) = 
\beta_{i}\left(\frac{R}{\mathcal{M}_{K_{n+1}}^{(k)}}\right) = 
\sum_{J \in \mathcal{I}_{\mathbf{m}}^{*,1}\langle i-1 \rangle} \beta_{i-1}^J +
\sum_{\tilde{J} \in \mathcal{I}_{\mathbf{m}}^{*,2} \langle i-1 \rangle}
 \beta_{i-1}^{\tilde{J}} ,
\]
where $\beta_{i-1}^J  = \prod_{\alpha=1}^i {j_{\alpha+1} \choose j_{\alpha}}$ 
for $J=\{j_1,\ldots,j_i\} \in 
\mathcal{I}_{\mathbf{m}}^{*,1} \langle i-1 \rangle$ and 
$\beta_{i-1}^{\tilde{J}}  = 
\left[\prod_{\alpha=1}^t {l_{\alpha+1} \choose l_{\alpha}}\right] 
{l_t-l_{t-1}-1 \choose k -l_{t-1}}$
 for $\tilde{J}=\{l_1,\ldots,l_t\} \in 
\mathcal{I}_{\mathbf{m}}^{*,2} \langle i-1 \rangle$. Here,
$j_{i+1}=l_{t+1}=n$.
\end{enumerate}
\label{Prop1}
\end{proposition}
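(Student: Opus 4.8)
The plan is to deduce the whole statement from the description of the multigraded Betti numbers of Alexander duals of multipermutohedron ideals given in Theorem 3.2 of \cite{AC1}, specialised to $\mathbf{m}=(1,\ldots,1,n-k)$. By Lemma \ref{Lem1} we may replace $\mathcal{M}_{K_{n+1}}^{(k)}$ by the Alexander dual $I(\mathbf{u}(\mathbf{m}))^{[\mathbf{n}]}$, and Theorem 3.2 of \cite{AC1} then asserts that, up to the $\mathfrak{S}_n$-action on degrees, the nonzero multigraded Betti numbers of this dual are indexed by the dual $\mathbf{m}$-isolated subsets $J$: such a $J$ contributes in homological degree $\mathrm{dwt}(J)$ and in the multidegree $\mathbf{b}(J)$ recorded before the statement, with an explicit multiplicity. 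So the first step is to verify that, for the present $\mathbf{m}$, the dual $\mathbf{m}$-isolated subsets are exactly the type-1 and type-2 subsets listed above with the stated dual weights; this is just the translation of Definition 3.1 of \cite{AC1} to the block shape $\mathbf{m}=(1,\ldots,1,n-k)$, whose first $k$ blocks are singletons and whose last block $\{k+1,\ldots,n\}$ is the only thick one.

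For part (i) I would read off Theorem 3.2 of \cite{AC1} directly: a type-1 subset $J\subseteq[k+1]$ never meets the thick block, so at every step its local contribution is that of an ordinary permutohedron ideal and the multiplicity is forced to be $1$; since $t=|J|$ and $\mathrm{dwt}(J)=t-1=i-1$ one gets $t=i$, as claimed. The genuinely delicate point is part (ii): here the top element $j_t$ lies in the thick block $\{k+2,\ldots,n\}$, so the multiplicity produced by \cite{AC1} is no longer $1$. I would extract it from the local factor attached to the repeated part of $\mathbf{m}$, interleaving the unforced coordinates lying strictly below $j_t$ against the constant value $\lambda_{j_t}=n-k$, and identify this count with $\binom{j_t-j_{t-1}-1}{k-j_{t-1}}$; the constraint $\mathrm{dwt}(J)=(t-1)+(j_t-k)=i-1$ gives $t+j_t-k=i$. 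Checking that this local count is exactly the value output by the general theorem is the main obstacle, and it is where I expect to spend most of the effort.

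Part (iii) is then formal. The multipermutohedron ideal $I(\mathbf{u}(\mathbf{m}))$ is generated by a full $\mathfrak{S}_n$-orbit of monomials, hence is invariant under the permutation action on the variables; therefore so are its Alexander dual $\mathcal{M}_{K_{n+1}}^{(k)}$ and its minimal multigraded free resolution. Consequently $\beta_{i-1,\mathbf{b}}=\beta_{i-1,\pi\mathbf{b}}$ for every $\pi\in\mathfrak{S}_n$, and by the support statement in Theorem 3.2 of \cite{AC1} the only degrees carrying a nonzero Betti number are the permutations of the $\mathbf{b}(J)$. This is precisely (iii), once one checks that distinct dual $\mathbf{m}$-isolated subsets give distinct $\mathfrak{S}_n$-orbits of degrees, which holds because the multiset of entries of $\mathbf{b}(J)$ recovers the block sizes of $J$ and hence $J$ itself.

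Finally, for part (iv) I would sum the multigraded Betti numbers over each orbit. For $J=\{j_1,\ldots,j_i\}\in\mathcal{I}_{\mathbf{m}}^{*,1}\langle i-1\rangle$ the vector $\mathbf{b}(J)$ takes the distinct positive values $\lambda_{j_1}>\cdots>\lambda_{j_i}$ on blocks of sizes $j_1,j_2-j_1,\ldots,j_i-j_{i-1}$ together with $0$ on the remaining $n-j_i$ coordinates, so its $\mathfrak{S}_n$-orbit has size $\tfrac{n!}{j_1!(j_2-j_1)!\cdots(n-j_i)!}$, which telescopes to $\prod_{\alpha=1}^{i}\binom{j_{\alpha+1}}{j_\alpha}$ with $j_{i+1}=n$; since each degree in the orbit carries Betti number $1$ by (i), this orbit contributes $\beta_{i-1}^{J}$. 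The same count gives orbit size $\prod_{\alpha=1}^{t}\binom{l_{\alpha+1}}{l_\alpha}$ (with $l_{t+1}=n$) for a type-2 subset $\tilde J=\{l_1,\ldots,l_t\}$, but now each degree carries the multiplicity $\binom{l_t-l_{t-1}-1}{k-l_{t-1}}$ from (ii), yielding $\beta_{i-1}^{\tilde J}$. Summing over all dual $\mathbf{m}$-isolated subsets of dual weight $i-1$, and using the injectivity of $J\mapsto\mathbf{b}(J)$-orbit from (iii) to avoid double counting, produces the displayed formula for $\beta_{i-1}(\mathcal{M}_{K_{n+1}}^{(k)})=\beta_{i}\!\left(\tfrac{R}{\mathcal{M}_{K_{n+1}}^{(k)}}\right)$.
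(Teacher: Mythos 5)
Your proposal is correct and follows essentially the same route as the paper: the paper's proof is simply the observation that $\mathcal{M}_{K_{n+1}}^{(k)} = I(\mathbf{u}(\mathbf{m}))^{[\mathbf{n}]}$ (Lemma \ref{Lem1}) followed by a citation of Theorem 3.2 and Corollary 3.4 of \cite{AC1}. You supply the specialisation to $\mathbf{m}=(1,\ldots,1,n-k)$, the $\mathfrak{S}_n$-invariance argument, and the orbit-size computation explicitly, all of which the paper leaves implicit in that citation.
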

\begin{proof}
 Since $\mathcal{M}_{K_{n+1}}^{(k)} = 
I(\mathbf{u}(\mathbf{m}))^{[\mathbf{n}]}$, it follows from Theorem 3.2 and 
Corollary 3.4 of \cite{AC1}. \hfill $\square$
\end{proof}
 Proposition \ref{Prop1} describes all multigraded Betti 
numbers of $\mathcal{M}_{K_{n+1}}^{(k)}$. We hope that
it could be helpful in constructing a concrete minimal 
resolution of  $\mathcal{M}_{K_{n+1}}^{(k)}$.

\begin{corollary}
 Assume that $n \ge 3$ and $1 \le i \le n-1$. Then 
$\beta_{i-1}\left(\mathcal{M}_{K_{n+1}}^{(1)}\right)
= i {n+1 \choose i+1}$ and 
\[
 \beta_{i-1}\left(\mathcal{M}_{K_{n+1}}^{(n-2)}\right) =
 \sum_{0 <j_1 <\ldots<j_i <n} 
\frac{n!}{j_1!(j_2-j_1)! \cdots (j_n-j_i)!} +
 \sum_{0 <l_1 < \ldots < l_{i-3} < n-1} 
\frac{n!(n-l_{i-3}-1)}{l_1! (l_2-l_1)! \cdots (n-l_{i-3})!}.
\]
\label{Cor1}
\end{corollary}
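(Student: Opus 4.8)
The plan is to read off Corollary \ref{Cor1} from Proposition \ref{Prop1} by substituting $k=1$ and $k=n-2$ and then evaluating the two resulting sums. In each case the real content is purely combinatorial: I must (a) list the dual $\mathbf{m}$-isolated subsets of type-1 and type-2 lying in a fixed dual weight, (b) substitute them into the closed forms $\beta_{i-1}^{J}$ and $\beta_{i-1}^{\tilde J}$ of part (iv), and (c) collapse the outcome using Pascal's rule and one telescoping identity. That identity is $\prod_{\alpha=1}^{t}\binom{j_{\alpha+1}}{j_\alpha}=\frac{n!}{j_1!\,(j_2-j_1)!\cdots(n-j_t)!}$ with the convention $j_{t+1}=n$, proved by cancelling the internal factorials; it converts every product of binomial coefficients produced by (iv) into a single multinomial coefficient, which is exactly the shape of the terms in Corollary \ref{Cor1}.

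For $k=1$ we have $\mathbf{m}=(1,n-1)$ and $[k+1]=\{1,2\}$, so the type-1 subsets are only $\{1\},\{2\}$ and $\{1,2\}$, while every type-2 subset is of the form $\{j\}$ or $\{1,j\}$ with $3\le j\le n$. Reading part (iv), I would record the contributions $\binom{n}{1},\binom{n}{2}$ from the singletons, $\binom{2}{1}\binom{n}{2}$ from $\{1,2\}$, and, from the factor $\binom{l_t-l_{t-1}-1}{k-l_{t-1}}$, the type-2 values $(j-1)\binom{n}{j}$ for $\{j\}$ and $j\binom{n}{j}$ for $\{1,j\}$. Grouping the subsets that land in homological degree $i-1$, a common factor $i$ factors out and Pascal's rule $\binom{n}{i}+\binom{n}{i+1}=\binom{n+1}{i+1}$ collapses the two binomials into one, giving $i\binom{n+1}{i+1}$; the cases $i=1$, $i=2$ and $i\ge 3$ are parallel but must be treated separately because some of these families are empty for small $i$.

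For $k=n-2$ we have $\mathbf{m}=(1,\dots,1,2)$ and $[k+1]=[n-1]$. Here the type-1 subsets are precisely the $i$-element chains $\{j_1<\cdots<j_i\}\subseteq[n-1]$, and the telescoping identity turns $\beta_{i-1}^{J}$ into $\frac{n!}{j_1!(j_2-j_1)!\cdots(n-j_i)!}$; summing over all such chains reproduces the first sum of the Corollary. For the type-2 subsets the requirement $k+1<j_t\le n$ forces the largest element to equal $n$, so each is $\{l_1<\cdots<l_{t-1}<n\}$ with $\{l_1,\dots,l_{t-1}\}\subseteq[n-2]$; the extra factor then simplifies to $\binom{n-l_{t-1}-1}{(n-2)-l_{t-1}}=n-l_{t-1}-1$, while the remaining product telescopes (using $\binom{n}{n}=1$) to $\frac{n!}{l_1!\cdots(n-l_{t-1})!}$. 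This is exactly the summand of the second sum, carrying the factor $n-l_{t-1}-1$.

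The step I expect to be the main obstacle is the bookkeeping that matches the running index of each displayed sum to the dual weight $i-1$: type-1 and type-2 subsets contribute in the same homological degree but through chains of different lengths, so the relation $t+j_t-k=i$ must be used carefully to express $t$, and the degenerate short chains (where an empty chain is read off with the convention $l_0=0$) must be accounted for at the boundary. Once the chain lengths and these boundary terms are fixed, the binomial collapse in the $k=1$ case and the multinomial reassembly in the $k=n-2$ case are both routine.
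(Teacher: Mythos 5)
Your proposal is correct and follows essentially the same route as the paper: the paper's proof likewise enumerates the dual $\mathbf{m}$-isolated subsets for $k=1$ (finding exactly $\{1,i\}$ and $\{i+1\}$ in homological degree $i-1$, whence $i\binom{n}{i}+i\binom{n}{i+1}=i\binom{n+1}{i+1}$ by Pascal) and for $k=n-2$ (type-1 subsets being the $i$-chains in $[n-1]$ and type-2 subsets the chains ending in $n$ with $t=i-2$), then telescopes the products of binomial coefficients from Proposition \ref{Prop1}(iv) into multinomial coefficients. The index bookkeeping you flag as the main obstacle is indeed the only delicate point (the paper's stated dual-weight formula and its enumeration in the proof differ by one, and the displayed $(j_n-j_i)!$ should read $(n-j_i)!$), but your grouping resolves it exactly as the paper does.
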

\begin{proof}
 For $k=1$, we have $\mathbf{m}=(1,n-1) \in \mathbb{N}^2$.
 We can easily see that
$\mathcal{I}_{\mathbf{m}}^{*}\langle i-1 \rangle = 
\{\{1,i\},\{i+1\}\}$ for $i \ge 2$ and
$\mathcal{I}_{\mathbf{m}}^{*}\langle 0 \rangle = \{\{1\},\{2\}\}$. Thus, 
$\beta_0(\mathcal{M}_{K_{n+1}}^{(1)}) = 
\beta_0^{\{1\}} + \beta_0^{\{2\}} = {n \choose 1}+{n \choose 2}=
{n+1 \choose 2}$. For $ i \ge 2$, 
\begin{eqnarray*}
 \beta_{i-1}(\mathcal{M}_{K_{n+1}}^{(1)}) & =  & \beta_{i-1}^{\{1,i\}} + 
\beta_{i-1}^{\{i+1\}} = {i \choose 1}{n \choose i} {i-2 \choose 0} 
+{n \choose i+1}{i \choose 1}\\
& = & i {n \choose i} + i {n \choose i+1} = i {n+1 \choose i+1},
\end{eqnarray*}
which is same as $\beta_i\left(\frac{R}{\mathcal{M}_{K_{n+1}}^{(1)}}\right)=
\sum_{j=1}^n j {j-1 \choose i-1}$ obtained in \cite{D2}.

For $k=n-2$, $J =\{j_1,\ldots,j_i\} \in 
\mathcal{I}_{\mathbf{m}}^{*,1}\langle i-1 \rangle$ if and 
only if $J \subseteq [n-1]$ and 
$\beta_{i-1}^J = \prod_{\alpha=1}^i {j_{\alpha+1} \choose j_{\alpha}}$. 
Also, $\tilde{J}=\{l_1,\ldots,l_t\} \in 
\mathcal{I}_{\mathbf{m}}^{*,2}\langle i-1 \rangle$ if and only if 
$l_{t-1} \le n-2,~ l_t=n$ and $t=i-2$. Since, 
$\beta_{i-1}^{\tilde{J}} = \left[\prod_{\alpha=1}^{i-3} 
{l_{\alpha+1} \choose l_{\alpha}} \right] 
{n-l_{i-3}-1 \choose n-l_{i-3}-2}$, we get
the desired expression for 
$\beta_{i-1}\left(\mathcal{M}_{K_{n+1}}^{(n-2)}\right)$. 
\hfill $\square$
\end{proof}

\noindent
{\bf{Standard monomials of  $\mathcal{M}_{K_{n+1}}^{(k)}$}} : 
A monomial $\mathbf{x}^{\mathbf{b}} = \prod_{j=1}^n x_j^{b_j}$ is called
a {\em standard monomial} of $\frac{R}{\mathcal{M}_{K_{n+1}}^{(k)}}$ (or 
$\mathcal{M}_{K_{n+1}}^{(k)}$) if 
$\mathbf{x}^{\mathbf{b}} \notin \mathcal{M}_{K_{n+1}}^{(k)}$.
We have seen that $I(\mathbf{u}(\mathbf{m}))^{[\mathbf{n}]} = 
\mathcal{M}_{K_{n+1}}^{(k)}$.
Thus the number of standard monomials of
 $\mathcal{M}_{K_{n+1}}^{(k)}$ is precisely
the number of $\lambda$-parking functions for
 $\lambda = (\lambda_1, \ldots, \lambda_n)$, where
$\lambda_i =n-i+1$ for $1 \le i \le k$ and $\lambda_j = n-k$ for $k+1 \le j \le n$ (see \cite{PoSh}).
\begin{definition}
{\rm  Let $\lambda = (\lambda_1,\ldots,
\lambda_n) \in \mathbb{N}^n$ with $\lambda_1 \ge \lambda_2 \ge \ldots \ge \lambda_n$. A finite sequence
$ \mathcal{P}=(p_1,\ldots,p_n) \in \mathbb{N}^n$ is called
a {\em $\lambda$-parking function} of length $n$, if 
a non-decreasing rearrangement
$p_{i_1} \le p_{i_2}
\le \ldots \le p_{i_n}$ of $\mathcal{P}$ satisfies 
$p_{i_j} < \lambda_{n-j+1}$ 
for $1 \le j \le n$. Let ${\rm PF}(\lambda)$ be the set 
of $\lambda$-parking functions. }
\end{definition}
Ordinary parking functions of length $n$ are precisely  
$\lambda$-parking functions
of length $n$ for $\lambda=(n,n-1,\ldots,2,1) \in \mathbb{N}^n$.
The number of $\lambda$-parking functions is given by the following
Steck determinant formula. Let $\Lambda (\lambda_1,\ldots,\lambda_n) = 
\left[\frac{\lambda_{n-i+1}^{j-i+1}}{(j-i+1)!}\right]_{1 \le i,j \le n}$. In other words,
the $(i,j)^{th}$ entry of the 
$n \times n$ matrix
$\Lambda (\lambda_1,\ldots,\lambda_n)$ is
$\frac{\lambda_{n-i+1}^{j-i+1}}{(j-i+1)!}$,
where, by convention,  $\frac{1}{(j-i+1)!} =0$ for $i > j+1$.
The determinant $\det(\Lambda(\lambda_1,\ldots,\lambda_n))$ 
is called a {\em Steck determinant}. 
 We have,
\[
 |{\rm PF}(\lambda) | = n!~~\det (\Lambda (\lambda_1,\ldots,\lambda_n)) = 
n!~~
\det \left[\frac{\lambda_{n-i+1}^{j-i+1}}{(j-i+1)!}\right]_{1 \le i,j \le n}.
\]
Thus, 
\[
 \dim_{\mathbb{K}}\left(\frac{R}{\mathcal{M}_{K_{n+1}}^{(k)}}\right) =
 n!~~\det( \Lambda(n,n-1,\ldots,n-k+1,n-k,\ldots,n-k)).
\]
We proceed to evaluate
Steck determinant and compute the number of standard monomials
of $\mathcal{M}_{K_{n+1}}^{(k)}$ as indicated in \cite{CK}.
For more on parking functions, we refer to \cite{PiSt, PoSh, Ya}. 

More generally, for $ a,b \ge 1$, we consider 
the complete multigraph $K_{n+1}^{a,b}$ on the vertex
set $V$ with adjacency matrix $A(K_{n+1}^{a,b}) =[a_{ij}]_{0\le i,j\le n}$ given by
$a_{0,i}=a_{i,0}=a$ and $a_{i,j} =b$ for $i,j \in V \setminus \{0\}$; $i\ne j$. In other
words, $K_{n+1}^{a,b}$ has exactly $a$ number of edges between the root $0$ and 
any other vertex $i$, while it has exactly $b$ number of edges between distinct
non-rooted vertices $i$ and $j$. Clearly, $K_{n+1}^{1,1} 
= K_{n+1}$.
The $k$-skeleton ideal 
$\mathcal{M}_{K_{n+1}^{a,b}}^{(k)}$
of $K_{n+1}^{a,b}$ is given by
\[
 \mathcal{M}_{K_{n+1}^{a,b}}^{(k)} = 
\left\langle \left(\prod_{j \in A} x_j \right)^{a+(n-|A|)b} : 
\emptyset \ne A \subseteq [n]; ~|A| \le k+1 \right\rangle .
\]
Let $\mathbf{u}^{a,b}(\mathbf{m}) = 
(a,a+b,\ldots,a+(k-1)b,a+kb,\ldots,a+kb) \in \mathbb{N}^n$. 
Then, as in Lemma \ref{Lem1}, we have
\[ 
I(\mathbf{u}^{a,b}(\mathbf{m}))^{[\mathbf{2a+(n-1)b-1}]} = 
\mathcal{M}_{K_{n+1}^{a,b}}^{(k)}; \quad \quad (0 \le k \le n-1),
\]
 where 
$\mathbf{2a+(n-1)b-1} = (2a+(n-1)b-1, \ldots, 2a+(n-1)b-1) \in \mathbb{N}^n$.
Thus, 
\begin{equation}
 \dim_{\mathbb{K}}\left(\frac{R}{{\mathcal{M}}_{K_{n+1}^{a,b}}^{(k)}}\right) =
 n!~~\det( \Lambda(\lambda_1^{a,b}, \ldots, \lambda_n^{a,b})),
\label{Eqn1}
\end{equation}
where
 $\lambda_i^{a,b} = a+(n-i)b$ for $1 \le i \le k$ and 
$\lambda_j^{a,b} = a+(n-k-1)b$ for $k+1 \le j \le n$.

Consider the polynomial $f_n(x) = 
\det (\Lambda(x+(n-1)b,x+(n-2)b,\ldots,x+b,x))$ in an indeterminate $x$. 
In other words, we have
\[ f_n(x) = \det \begin{bmatrix} \frac{x}{1!}& \frac{x^2}{2!} &                
\frac{x^3}{3!} & \ldots & \frac{x^{n-1}}{(n-1)!} &\frac{x^n}{n!}\\
1 & \frac{x+b}{1!} & \frac{(x+b)^2}{2!}& \ldots & 
\frac{(x+b)^{n-2}}{(n-2)!} & \frac{(x+b)^{n-1}}{(n-1)!}\\
0 & 1 & \frac{x+2b}{1!}& \ldots & \frac{(x+2b)^{n-3}}{(n-3)!} & 
\frac{(x+2b)^{n-2}}{(n-2)!}\\
\vdots & \vdots & \vdots & \ddots & \vdots & \vdots \\
0 & 0 & 0 & \ldots  & \frac{(x+(n-2)b)}{1!} & \frac{(x+(n-2)b)^2}{2!}\\
0 & 0 & 0 & \ldots & 1 & \frac{x+(n-1)b}{1!} 
\end{bmatrix}.
\]
Also, for $1 \le k \le n-2$, consider another polynomial $g_{n;k}(x)$ in $x$ 
given by
\[ g_{n;k}(x) = \det (\Lambda(x+kb, x+(k-1)b,\ldots,x+b,x,\ldots,x)),\]
where the last $n-k$ coordinates  in $ (x+kb, x+(k-1)b,\ldots,x+b,x,\ldots,x)$
are $x$.
\begin{proposition} The polynomials $f_n(x)$ and $g_{n;k}(x)$ are given as follows.
\begin{enumerate}
 \item [{\rm (1)}] $f_n(x) = \frac{x(x+nb)^{n-1}}{n!}$.
\item[{\rm (2)}] $g_{n;k}(x) = \sum_{j=0}^k \frac{1}{j!} \frac{x^{n-j}}{(n-j)!}
(k-j+1)(k+1)^{j-1}b^j$.
\end{enumerate}
\label{Prop2}
\end{proposition}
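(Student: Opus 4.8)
The plan is to evaluate both determinants through a single device: recognizing $\Lambda$ as an upper Hessenberg matrix with unit subdiagonal whose leading principal submatrices have the same shape, and then solving the resulting linear recursion. Writing $y_i=\lambda_{n-i+1}$, the $(i,j)$ entry of $\Lambda$ is $y_i^{\,j-i+1}/(j-i+1)!$, which vanishes for $i>j+1$ and equals $1$ on the subdiagonal $i=j+1$. Expanding such a determinant along its last column, and using that every subdiagonal entry is $1$, gives
\[
\det\Lambda^{(n)}=\sum_{m=1}^{n}(-1)^{n-m}\,\frac{y_m^{\,n-m+1}}{(n-m+1)!}\,\det\Lambda^{(m-1)},\qquad \det\Lambda^{(0)}=1,
\]
where $\Lambda^{(m-1)}$ is the top-left $(m-1)\times(m-1)$ block. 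The crucial point is that this block is again a matrix of the same type, so the identity is a genuine recursion in the size.

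For part (1) we have $y_m=x+(m-1)b$ and $\Lambda^{(m-1)}$ is exactly the matrix defining $f_{m-1}(x)$, so after reindexing $p=m-1$ the display becomes
\[
\sum_{p=0}^{n}(-1)^{\,n-p}\,\frac{(x+pb)^{\,n-p}}{(n-p)!}\,f_p(x)=0\qquad(n\ge 1).
\]
Since the coefficient of $f_n(x)$ here is $1$, this recursion together with $f_0(x)=1$ determines the sequence uniquely, so it suffices to check that the proposed closed form satisfies it. Substituting $f_p(x)=x(x+pb)^{p-1}/p!$ collapses the product to $(x+pb)^{n-1}$ and turns the sum into $\tfrac{x}{n!}\sum_{p=0}^{n}(-1)^{n-p}\binom{n}{p}(x+pb)^{n-1}$, which is $\tfrac{x}{n!}$ times the $n$-th forward difference of the polynomial $p\mapsto(x+pb)^{n-1}$. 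As that polynomial has degree $n-1<n$, the difference vanishes; this Abel/binomial identity is the heart of part (1), and induction then finishes it.

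For part (2) the same recursion applies, but now $y_m=x$ for $m\le n-k$ and $y_m=x+(m-(n-k))b$ for $m>n-k$, and the leading block $\Lambda^{(m)}$ is the constant-$x$ matrix (with determinant $x^{m}/m!$) when $m\le n-k$ and the matrix of $g_{m;\,m-(n-k)}(x)$ when $m>n-k$. Both are instances of the claimed formula, the constant case being its $k=0$ specialization, so by strong induction I may insert the closed forms and split the sum at $m=n-k$ into a \emph{constant} part and an \emph{arithmetic} part. The claim then reduces to matching, for each monomial $x^{n-J}b^{J}$ with $0\le J\le k$, the coefficient produced by this double sum against $\tfrac{1}{J!}\tfrac{1}{(n-J)!}(k-J+1)(k+1)^{J-1}$, i.e.\ to a finite family of Abel-type convolution identities. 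As a cross-check one may instead use the block decomposition $\Lambda=\left(\begin{smallmatrix}P&Q\\ R&S\end{smallmatrix}\right)$ with $P$ the constant-$x$ corner ($\det P=x^{n-k}/(n-k)!$) and $S$ the arithmetic corner, which by part (1) satisfies $\det S=f_k(x+b)=\tfrac{(x+b)(x+(k+1)b)^{k-1}}{k!}$; since $R$ has a single nonzero entry, its Schur complement contributes only a rank-one correction to $\det P\,\det S$.

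The main obstacle lies entirely in this family of summation identities for part (2): the weights $(k-j+1)(k+1)^{j-1}$ are not separately of binomial type, and the factor $(k+1)^{j-1}$ signals an underlying Abel/Cayley convolution (the exponential generating function of the numbers $x(x+nb)^{n-1}$ being governed by the tree function $T=z\,e^{bT}$). I expect to establish them either by the $n$-th finite-difference argument of part (1) applied degree by degree in $x$, or by encoding the recursion as a functional equation and extracting coefficients via Lagrange inversion; it is the bookkeeping of the correction terms, rather than any conceptual difficulty, where the work concentrates.
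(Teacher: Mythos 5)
Your treatment of part (1) is correct and complete, and it takes a genuinely different route from the paper. The Hessenberg expansion along the last column is valid (the minor obtained by deleting row $m$ and column $n$ factors as $\det\Lambda^{(m-1)}$ times an upper-triangular block with unit diagonal), the recursion does determine the sequence uniquely since the $p=n$ coefficient is $1$, and substituting the closed form correctly collapses to $\tfrac{x}{n!}$ times the $n$-th forward difference of the degree-$(n-1)$ polynomial $p\mapsto (x+pb)^{n-1}$, which vanishes. The paper instead differentiates the determinant column by column: each column's derivative reproduces the previous column, so only the first column survives and $f_n'(x)=f_{n-1}(x+b)$; induction and integration with $f_n(0)=0$ finish it. Both are clean; yours trades the calculus for a finite-difference identity.

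Part (2), however, has a genuine gap. You set up the correct recursion and correctly identify the two kinds of leading blocks, but the entire content of the statement is then deferred to ``a finite family of Abel-type convolution identities'' which you explicitly do not prove --- you only say you \emph{expect} to establish them by one of two methods. The weights $(k-j+1)(k+1)^{j-1}$ do not split multiplicatively across the convolution, so matching the coefficient of each $x^{n-J}b^{J}$ is not routine bookkeeping; it is precisely where the difficulty of part (2) sits, and as written nothing certifies that the proposed closed form actually satisfies the recursion. The alternative block-decomposition ``cross-check'' is likewise not carried out: knowing that $RP^{-1}Q$ has rank one still leaves you needing the cofactors of $S$ to evaluate $\det(S-RP^{-1}Q)$. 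The paper avoids all of this with the same differentiation device as in part (1): column-by-column differentiation gives $g_{n;k}'(x)=g_{n-1;k}(x)$, hence $g_{n;k}^{(n-k-1)}(x)=f_{k+1}(x)=\frac{x(x+(k+1)b)^k}{(k+1)!}$, and since $g_{n;k}(0)=g_{n;k}'(0)=\cdots=g_{n;k}^{(n-k-1)}(0)=0$ one simply antidifferentiates the binomial expansion of $f_{k+1}$ term by term to land exactly on the claimed formula. If you want to salvage your approach, the cleanest repair is to import that observation: your own recursion already shows $\det\Lambda^{(n)}$ is determined by the smaller blocks, but the derivative identity replaces the unproven convolution identities with a one-line integration.
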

\begin{proof}
 We see that $f_1(x) = x$ and $f_2(x) = \frac{x(x+2b)}{2!}$. By induction on $n$, we assume
that $f_j(x) = \frac{x(x+jb)^{j-1}}{(j-1)!}$ for $ 1 \le j \le n-1$. Further, using
properties of determinants, we observe that the derivative $f_n^{\prime}(x)$ of 
$f_n(x)$ satisfies $f_n^{\prime}(x) = f_{n-1}(x+b)$. This shows that
$f_n^{\prime}(x) = \frac{(x+b)(x+nb)^{n-2}}{(n-1)!}$. As $f_n(0) = 0$, on integrating 
$f_n^{\prime}(x) = \frac{(x+b)(x+nb)^{n-2}}{(n-1)!}$ by parts, we get (1).

Again using properties of determinants, we see that 
the $(n-k-1)^{th}$ derivative $g_{n;k}^{(n-k-1)}(x)$ of $g_{n;k}(x)$ satisfies 
$g_{n;k}^{(n-k-1)}(x) = f_{k+1}(x) = \frac{x(x+(k+1)b)^k}{(k+1)!}
= \sum_{j=0}^k {k \choose j} x^{k-j+1} \frac{(k+1)^j b^j}{(k+1)!}$.
Since $g_{n;k}(0) = g_{n;k}^{\prime}(0) = \ldots = g_{n;k}^{(n-k-1)}(0) = 0$ and
the $(n-k-1)^{th}$ derivative of 
$\frac{x^{n-j}}{(n-j)(n-j-1)\ldots (k-j+2)}$ is $x^{k-j+1}$,
we get $g_{n;k}(x) = \sum_{j=0}^k {k \choose j} 
\frac{x^{n-j}}{(n-j) (n-j-1)\cdots(k-j+2)}
\frac{(k+1)^jb^j}{(k+1)!}$. This proves (2).
\hfill $\square$
\end{proof}
\begin{corollary} The number of standard monomials of 
$\frac{R}{\mathcal{M}_{K_{n+1}^{a,b}}}$ and 
$\frac{R}{\mathcal{M}_{K_{n+1}^{a,b}}^{(k)}}$ are given as follows.
\begin{enumerate}
 \item[{\rm (1)}] $ \dim_{\mathbb{K}}\left(\frac{R}{{\mathcal{M}}_{K_{n+1}^{a,b}}}\right)
= a(a+nb)^{n-1}$.
\item[{\rm (2)}]  $ \dim_{\mathbb{K}}\left(\frac{R}{{\mathcal{M}}_{K_{n+1}^{a,b}}^{(k)}}\right)
= \sum_{j=0}^k {n \choose j} (a+(n-k-1)b)^{n-j}(k-j+1)(k+1)^{j-1}b^j $.
\end{enumerate}
In particular, for $k=1$ and $k=n-2$, we have
$ \dim_{\mathbb{K}}\left(\frac{R}{{\mathcal{M}}_{K_{n+1}^{a,b}}^{(1)}}\right) =
\left(a+(n-2)b\right)^{n-1}(a+(2n-2)b)$ and 
$\dim_{\mathbb{K}}\left(\frac{R}{{\mathcal{M}}_{K_{n+1}^{a,b}}^{(n-2)}}\right)
= a(a+nb)^{n-1}+(n-1)^{n-1} b^{n}$.
\label{Cor2}
\end{corollary}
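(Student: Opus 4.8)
The plan is to read both formulas directly off the Steck-determinant expression (\ref{Eqn1}) by recognizing the relevant weight vectors $(\lambda_1^{a,b},\ldots,\lambda_n^{a,b})$ as specializations of the polynomials $f_n(x)$ and $g_{n;k}(x)$ whose closed forms are supplied by Proposition \ref{Prop2}. I would begin with (1), the case of the full ideal $\mathcal{M}_{K_{n+1}^{a,b}} = \mathcal{M}_{K_{n+1}^{a,b}}^{(n-1)}$. Here $\lambda_i^{a,b} = a+(n-i)b$ for \emph{every} $1 \le i \le n$, so the matrix $\Lambda(\lambda_1^{a,b},\ldots,\lambda_n^{a,b})$ is precisely $\Lambda(a+(n-1)b,\ldots,a+b,a)$, i.e. the matrix defining $f_n$ evaluated at $x=a$. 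By (\ref{Eqn1}) and Proposition \ref{Prop2}(1) this gives $\dim_{\mathbb{K}}(R/\mathcal{M}_{K_{n+1}^{a,b}}) = n!\,f_n(a) = a(a+nb)^{n-1}$.

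For (2) with $1 \le k \le n-2$ the key observation is the substitution $x = a+(n-k-1)b$. With this choice, the sequence $(x+kb,x+(k-1)b,\ldots,x+b,x,\ldots,x)$ (its last $n-k$ entries equal to $x$) agrees entry-by-entry with $(\lambda_1^{a,b},\ldots,\lambda_n^{a,b})$: for $1 \le i \le k$ one checks $x+(k-i+1)b = a+(n-i)b = \lambda_i^{a,b}$, and for $k+1\le i\le n$ both equal $a+(n-k-1)b$. Hence $\Lambda(\lambda_1^{a,b},\ldots,\lambda_n^{a,b})$ is the matrix defining $g_{n;k}$ at $x=a+(n-k-1)b$, so by (\ref{Eqn1}) and Proposition \ref{Prop2}(2), $\dim_{\mathbb{K}}(R/\mathcal{M}_{K_{n+1}^{a,b}}^{(k)}) = n!\,g_{n;k}(a+(n-k-1)b)$; multiplying through turns each $\tfrac{n!}{j!(n-j)!}$ into $\binom{n}{j}$ and yields the stated sum. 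The $k=1$ specialization is then an immediate two-term evaluation: the $j=0$ summand is $(a+(n-2)b)^n$ and the $j=1$ summand is $n(a+(n-2)b)^{n-1}b$, whose sum factors as $(a+(n-2)b)^{n-1}(a+(2n-2)b)$.

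The genuinely computational step, and the main obstacle, is the $k=n-2$ specialization. Substituting $k=n-2$ and $x=a+b$ reduces the claim to the identity
\[
S := \sum_{j=0}^{n-2}\binom{n}{j}(a+b)^{n-j}(n-1-j)(n-1)^{j-1}b^j = a(a+nb)^{n-1} + (n-1)^{n-1}b^n .
\]
My plan for $S$ is to extend the summation range to $0 \le j \le n$: the $j=n-1$ term vanishes since its factor $n-1-j$ is zero, while the formal $j=n$ term equals $-(n-1)^{n-1}b^n$. Writing $T := \sum_{j=0}^{n}\binom{n}{j}(a+b)^{n-j}(n-1-j)(n-1)^{j-1}b^j$ we thus get $S = T + (n-1)^{n-1}b^n$, so it remains to show $T = a(a+nb)^{n-1}$. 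Setting $y=(n-1)b$ gives $(n-1)^{j-1}b^j = y^j/(n-1)$, and splitting $n-1-j = (n-1)-j$ collapses the two resulting sums by the binomial theorem into $(n-1)(a+b+y)^n$ and $ny(a+b+y)^{n-1}$. Since $a+b+y = a+nb$, a short simplification of $\tfrac{(a+nb)^{n-1}}{n-1}\bigl[(n-1)(a+nb)-ny\bigr]$ reduces the bracket to $(n-1)a$, giving $T = a(a+nb)^{n-1}$ and completing the identity.
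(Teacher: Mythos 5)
Your proposal is correct, and for parts (1), (2) and the $k=1$ specialization it is essentially the paper's own argument: identify $\Lambda(\lambda_1^{a,b},\ldots,\lambda_n^{a,b})$ with the defining matrix of $f_n$ at $x=a$, respectively of $g_{n;k}$ at $x=a+(n-k-1)b$, and invoke Proposition \ref{Prop2} together with (\ref{Eqn1}). The one place you genuinely diverge is the $k=n-2$ specialization. The paper does not touch the summation formula from part (2) at all there: it integrates the derivative identity $g_{n;n-2}'(x)=f_{n-1}(x)=\frac{x(x+(n-1)b)^{n-2}}{(n-1)!}$ by parts to obtain the closed form $g_{n;n-2}(x)=\frac{1}{n!}\bigl[(x-b)(x+(n-1)b)^{n-1}+(n-1)^{n-1}b^n\bigr]$, with the term $(n-1)^{n-1}b^n$ emerging as the constant of integration forced by $g_{n;n-2}(0)=0$, and then evaluates at $x=a+b$. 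You instead prove the equivalent binomial identity directly, by extending the sum to $j=n$ (picking up $-(n-1)^{n-1}b^n$ from the formal $j=n$ term), substituting $y=(n-1)b$, and collapsing via the binomial theorem; your computation checks out, including the reduction of the bracket $(n-1)(a+nb)-ny$ to $(n-1)a$. In effect you have given an independent, purely combinatorial proof of the polynomial identity that the paper records only afterwards in Remark \ref{Rem1}(2) as a consequence of integrating $g_{n;n-2}'$ in two ways. The paper's route is shorter once one accepts the determinant-differentiation machinery; yours is more elementary and self-contained, at the cost of a slightly longer algebraic manipulation.
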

\begin{proof}
 From (\ref{Eqn1}), we have 
\[
\dim_{\mathbb{K}}\left(\frac{R}{{\mathcal{M}}_{K_{n+1}^{a,b}}}\right) =
 n!~f_n(a) \quad {\rm and}
\quad 
 \dim_{\mathbb{K}}\left(\frac{R}{{\mathcal{M}}_{K_{n+1}^{a,b}}^{(k)}}\right) =
 n! ~ g_{n;k}(a+(n-k-1)b).
\]
In view of Proposition \ref{Prop2}, we get (1) and (2). 

For $k=1$, we see that $g_{n;1}^{(n-2)}(x) =f_2(x) = \frac{x(x+2b)}{2!} = 
\frac{x^2}{2!} + bx$. As 
$g_{n;1}^{(j)}(0) = 0$ for $0 \le j \le n-2$, we obtain 
\[ g_{n;1}(x) = \frac{x^n}{n!} + \frac{bx^{n-1}}{(n-1)!} = \frac{x^{n-1}(x+nb)}{n!}.
\]
Now $\dim_{\mathbb{K}}\left(\frac{R}{{\mathcal{M}}_{K_{n+1}^{a,b}}^{(1)}}\right) = 
n! ~ g_{n;1}(a+(n-2)b)$ yields the desired result. 

Also, for $k=n-2$, we have
$ g_{n;n-2}^{\prime}(x) = f_{n-1}(x) = \frac{x(x+(n-1)b)^{n-2}}{(n-1)!}$. On integrating 
it by parts, we get
$g_{n;n-2}(x) = \frac{x(x+(n-1)b)^{n-1}}{(n-1)! (n-1)} - \frac{(x+(n-1)b)^n}{n! (n-1)} +C$, where
$C$ is a constant of integration. Since $g_{n;n-2}(0)=0$, we get $C = \frac{(n-1)^{n-1}b^n}{n!}$.
Hence, $g_{n;n-2}(x) = \frac{1}{n!} [(x-b) (x+(n-1)b)^{n-1} + (n-1)^{n-1}b^n]$.
Again, from 
$\dim_{\mathbb{K}}\left(\frac{R}{{\mathcal{M}}_{K_{n+1}^{a,b}}^{(n-2)}}\right) = 
n! ~ g_{n;n-2}(a+b)$, we get the desired result.
\hfill $\square$
\end{proof}
\begin{remarks}
 {\rm 
\begin{enumerate}
 \item[{\rm (1)}] It can be easily checked that 
 the determinant $\det(Q_{K_{n+1}^{a,b}})$ of 
the truncated signless Laplace matrix 
$Q_{K_{n+1}^{a,b}}$
of $K_{n+1}^{a,b}$ satisfies
\[
 \dim_{\mathbb{K}}\left(\frac{R}{{\mathcal{M}}_{K_{n+1}^{a,b}}^{(1)}}\right) =
\left(a+(n-2)b\right)^{n-1}(a+(2n-2)b) =  \det(Q_{K_{n+1}^{a,b}}).
\]
This extends Corollary 3.4 of \cite{D2} to the 
complete multigraph $K_{n+1}^{a,b}$.
\item[{\rm (2)}] We have $g_{n;n-2}^{\prime}(x) = f_{n-1}(x) =\frac{x(x+(n-1)b)^{n-2}}{(n-1)!}=
\sum_{j=0}^{n-2} {n-2 \choose j} \frac{x^{n-1-j}}{(n-1)!} (n-1)^j b^j $.  Thus on integrating
$g_{n;n-2}^{\prime}(x)$ in two ways, we get a polynomial identity 
\[
 g_{n;n-2}(x) = \frac{(x-b)(x+(n-1)b)^{n-1}+(n-1)^{n-1}b^n}{n!} = 
\frac{\sum_{j=0}^{n-2} {n \choose j} x^{n-j} (n-j-1) (n-1)^{j-1}b^j}{n!}.
\]
On substituting $x=a+b$, we get an identity
\[
 \sum_{j=0}^{n-2} {n \choose j} (a+b)^{n-j} (n-j-1) (n-1)^{j-1}b^j = a(a+nb)^{n-1}+(n-1)^{n-1}b^n
\]
for positive integers $a$ and $b$. Taking $a=b=1$, it
justifies the equality
\[ 
\sum_{j=0}^{n-2} {n \choose j} 2^{n-j} (n-j-1) (n-1)^{j-1} = (n+1)^{n-1} + (n-1)^{n-1}
\]
 described in the Remark 3.3 of \cite{D1}.
\item[{\rm (3)}] From Corollary \ref{Cor2}, the 
number of spherical $K_{n+1}^{a,b}$-parking function
is given by $|{\rm sPF}(K_{n+1}^{a,b})| = (n-1)^{n-1}b^n$. Note that this number is independent of $a$.
\end{enumerate}
\label{Rem1}
}
\end{remarks}

\noindent
{\bf{DFS burning Algorithm}} : 
We shall briefly describe Depth-First-Search (DFS) burning algorithms
of Perkinsons-Yang-Yu \cite{PYY} and Gaydarov-Hopkins \cite{GH}.
Firstly, we set up graph theoretic notations and invariants
needed for the DFS
algorithm. Let $G$ be a connected graph on the vertex set 
$V(G) = V=\{0,1,\ldots,n\}$. Suppose $A(G) = [a_{ij}]_{0 \le i,j \le n}$ 
is the (symmetric) adjacency matrix of $G$. Since $G$ has no loops, $a_{ii}=0$.
Let $E(i,j)=E(j,i)$ be the set of edges between $i$ and $j$ for distinct 
$i,j \in V$. If $E(i,j) \ne \emptyset$, then $i$ and $j$ are called 
{\em adjacent vertices} and we write $ i \sim j$. On the other hand,
if $i$ and $j$ are non-adjacent, we write $i \nsim j$.
We have $|E(i,j)|=a_{ij}$. The graph $G$ is called a {\em simple graph} if
$|E(i,j)|=a_{ij} \le 1$ for $i,j \in V$. Otherwise, $G$ is called a 
{\em multigraph}. The set $E(G) = \bigcup_{i,j \in V} E(i,j)$ is the set 
of edges of the graph $G$. If $v \in V$, 
then $G - \{v\}$ denotes
the graph on the vertex set $V \setminus \{v\}$ obtained from $G$ on deleting 
the vertex $v$ and all the edges through $v$. 
If $e\in E(G)$ is an edge of $G$, then
$G - \{e\}$ denotes the graph on the 
vertex set $V$ obtained from $G$
on deleting the edge $e$. 
If $E(i,j) \ne \emptyset$, then $G - E(i,j)$ 
denotes the 
graph on vertex set $V$ obtained 
from $G$ on deleting all the edges between $i$
and $j$. Fix a root $r \in V$ of $G$ (usually, we take $r=0$). 
Set $\widetilde{V} = V \setminus \{r \}$. Let ${\rm SPT}(G)$ be the
set of  spanning trees of $G$ rooted at $r$. We orient spanning tree 
$T \in {\rm SPT}(G)$ so that all paths in $T$ are directed away from the 
root $r$. For every $j \in \widetilde{V}$, there is a unique oriented 
path in $T$ from the root $r$ to $j$. An 
$i \in \widetilde{V}$ lying on this 
unique path in $T$ is called an {\em ancestor} of $j$ in $T$. 
Equivalently, we say that $j$ is a {\em descendent} 
of $i$ in $T$. If in addition,
$i$ and $j$ are adjacent in $T$, then 
we say that $i$ is a {\em parent} of its {\em child}
$j$. Every child $j$ has a unique parent ${\rm par}_T(j)$ in $T$.
By an {\em inversion} of $T \in {\rm SPT}(G)$, we mean an ordered pair
$(i,j)$ of vertices such that $i$ is an ancestor of $j$ in $T$ with $ i > j$. 
The total number of inversions of a spanning tree $T$ is denoted by
${\rm inv}(T)$.
An inversion $(i,j)$ of $T$ is called a {\em $\kappa$-inversion} of $T$ if
$i$ is not the root $r$ and ${\rm par}_T(i)$ is adjacent to $j$ in $G$. The
{\em $\kappa$-number} $\kappa(G, T)$ of $T$ in $G$ is given by
\[
 \kappa(G, T) = \sum_{\substack{i,j \in \widetilde{V}; \\ i > j}} 
 |E({\rm par}_T(i),j)|.
\]
For a simple graph $G$, $\kappa(G, T)$ is the total number of $\kappa$-inversions of
$T$. If $G = K_{n+1}$ with root $0$. then 
$\kappa(K_{n+1}, T) = {\rm inv}(T)$ for 
every $T \in {\rm SPT}(K_{n+1})$.
The invariant 
\[
g(G) = |E(G)|-|V(G)|+1 = 
\left(\sum_{0 \le j < i \le n} a_{ji} \right) -(n+1)+1 = 
\left(\sum_{0 \le j < i \le n} a_{ji} \right) - n
\]
is called the {\em genus} of the graph $G$. For a $G$-parking function 
$\mathcal{P} : \widetilde{V} \longrightarrow \mathbb{N}$, the {\em reverse sum}
of $\mathcal{P}$ is given by
\[{\rm rsum}(\mathcal{P}) = g(G) - {\rm sum}(\mathcal{P}) = 
 g(G) - \sum_{i \in \widetilde{V}} \mathcal{P}(i).
\]
In the definition of $G$-parking function, we have taken root $r=0$. 
For a root $r$ different from $0$, a notion 
of $G$-parking functions (with repect to
root $r$) $\mathcal{P}: V \setminus \{r\} 
\longrightarrow \mathbb{N}$
 can be easily defined (see \cite{PYY}). We are now in a position to 
describe DFS burning algorithm.

Let $G$ be a simple graph with a root $r \in V$. Applied to 
an input function $\mathcal{P}: V \setminus \{r\} 
\longrightarrow \mathbb{N}$,
the DFS algorithm of Perkinsons-Yang-Yu \cite{PYY} gives a subset
${\tt burnt\_\tt vertices}$ of burnt vertices and a subset
${\tt tree\_edges}$ of tree edges as an output. 
We imagine that a fire starts at
the root $r$ and spread to other vertices of $G$ 
according to the depth-first rule.
The value $\mathcal{P}(j)$ of the input function 
$\mathcal{P}$ can be considered as 
the number of water droplets available at 
vertex $j$ that prevents spread of
fire to $j$. If $i$ is a burnt vertex, 
then consider the largest non-burnt
vertex $j$ adjacent to $i$. If $\mathcal{P}(j) = 0$, 
then fire from $i$
will spread to $j$. In this case, add $j$ in  
${\tt burnt\_\tt vertices}$ and include the edge $(i,j)$ in 
${\tt tree\_edges}$. Now the fire spreads from the burnt vertex $j$.
On the other hand, if $\mathcal{P}(j) > 0$, then one water droplet available at $j$
will be used to prevent fire  from reaching $j$ through the edge $(i,j)$. In this case,
the dampened edge $(i,j)$ is removed from $G$, number of water droplets available at
$j$ is reduced to $\mathcal{P}(j) -1$ and the fire continue to 
spread from the burnt vertex $i$ through non-dampened edges. If all the 
edges from $i$ to unburnt vertices get dampened, then the search backtracks.
At the start, ${\tt burnt\_vertices} = \{r\}$ and ${\tt tree\_edges} = \{ \}$.

Perkinson, Yang and Yu \cite{PYY} constructed a bijection 
$\phi : {\rm PF}(G) \longrightarrow {\rm SPT}(G)$ using their DFS algorithm.
We state their result for future reference.

\begin{theorem}[Perkinson-Yang-Yu]
Let $G$ be a simple graph on $V$ with root $r$. If on applying 
DFS burning algorithm  to $\mathcal{P}: V \setminus \{r\} \longrightarrow 
\mathbb{N}$, the subset ${\tt burnt\_vertices}$ of burnt vertices is
$V$, then $\mathcal{P}$ is a $G$-parking function and  
the ${\tt tree\_edges}$ of tree edges form a spanning tree
$\phi(\mathcal{P})$ of $G$. Further, the mapping 
$\mathcal{P} \mapsto \phi(\mathcal{P})$ given by DFS algorithm induces
a bijection $\phi : {\rm PF}(G) \longrightarrow {\rm SPT}(G)$ such that
\[
 {\rm rsum}(\mathcal{P}) = g(G) - {\rm sum}(\mathcal{P}) = 
\kappa(G,\phi(\mathcal{P})).
\]
\label{Thm1}
\end{theorem}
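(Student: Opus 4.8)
The plan is to reason directly from the mechanics of the DFS burning algorithm, classifying every edge of $G$ as exactly one of three types produced by a run on $\mathcal{P}$: a \emph{tree edge} (the fire spread across it), a \emph{dampened edge} (one water droplet was spent on it), or a \emph{remaining edge} (it was never examined decisively, because the vertex it would have ignited was already on fire). Since each non-root vertex enters ${\tt burnt\_vertices}$ exactly once and contributes exactly one tree edge (the edge from its parent) at that moment, when all of $V$ burns the $n$ tree edges form a connected acyclic spanning subgraph, i.e. a spanning tree $\phi(\mathcal{P})$ rooted at $r$.

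First I would settle the parking-function characterization by a protection argument. For the main direction, suppose $\mathcal{P}$ is not a $G$-parking function, so there is a nonempty $A \subseteq \widetilde{V}$ with $\mathcal{P}(i) \ge d_A(i)$ for every $i \in A$. Maintaining as a loop invariant that no vertex of $A$ is ever burnt, I would argue inductively that any edge from a burnt vertex into some $w \in A$ can only be dampened (never spread fire), and that at most $d_A(w) \le \mathcal{P}(w)$ such edges exist, so $w$'s droplets never reach zero; hence $A$ stays unburnt and the algorithm cannot burn all of $V$. Conversely, if $\mathcal{P}$ is a parking function but the set $B$ of unburnt vertices at termination were nonempty, then every edge across the cut $(V\setminus B, B)$ must have been dampened at its $B$-endpoint, forcing $\mathcal{P}(w) \ge d_B(w)$ for all $w \in B$ and contradicting the parking condition. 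This shows $\phi$ maps ${\rm PF}(G)$ into ${\rm SPT}(G)$.

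Next I would prove the statistic identity, which I expect to be the crux. Counting edges by type gives $|E(G)| = n + {\rm sum}(\mathcal{P}) + (\#\text{ remaining edges})$, and since $g(G) = |E(G)| - n$ this yields ${\rm rsum}(\mathcal{P}) = g(G) - {\rm sum}(\mathcal{P}) = \#\text{ remaining edges}$. The heart of the argument is then a bijection between remaining edges and $\kappa$-inversions of $T = \phi(\mathcal{P})$: a remaining edge $\{v,w\}$ with $v$ burnt before $w$ arises precisely when $w$ caught fire inside the subtree of a child $i$ of $v$ with $i > w$, before $v$'s decreasing scan reached $w$; taking $i$ to be that child exhibits $(i,w)$ as a $\kappa$-inversion with $v = {\rm par}_T(i) \sim w$, and conversely every $\kappa$-inversion $(i,j)$ reconstructs the unique remaining edge $\{{\rm par}_T(i), j\}$. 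Matching these carefully in both directions gives ${\rm rsum}(\mathcal{P}) = \kappa(G, \phi(\mathcal{P}))$.

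Finally, for bijectivity I would exhibit the inverse. The key observation is that the burning order is determined by $T$ alone: at each vertex its children are necessarily discovered in decreasing label order, since a smaller child cannot burn inside the subtree of a larger sibling without destroying the tree edge from their common parent. Thus running DFS on $T$ with the rule ``recurse into children largest-first'' recovers the entire order in which vertices ignite, and reading off, for each $w$, the number of neighbors whose dampening edge was spent on $w$ recovers $\mathcal{P}$ uniquely, proving injectivity; combined with $|{\rm PF}(G)| = |{\rm SPT}(G)| = \det(L_G)$ recorded in the introduction, this promotes $\phi$ to a bijection. The main obstacle I anticipate is the bookkeeping in this last reconstruction and in the remaining-edge/$\kappa$-inversion matching, where the interplay between the decreasing scan order and the depth-first recursion must be tracked precisely.
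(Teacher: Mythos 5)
The paper does not actually prove this statement: it is quoted from Perkinson--Yang--Yu with the remark ``we state their result for future reference,'' so there is no in-paper argument to compare yours against. Judged on its own, your sketch reconstructs the standard proof of \cite{PYY} correctly. The trichotomy of edges into tree/dampened/remaining, the protection argument in both directions for the parking-function characterization, the count ${\rm rsum}(\mathcal{P})=|E(G)|-n-{\rm sum}(\mathcal{P})=\#\{\text{remaining edges}\}$, the matching of a remaining edge $\{v,w\}$ with the $\kappa$-inversion $(i,w)$ where $i$ is the child of $v$ whose subtree absorbs $w$ before $v$'s decreasing scan reaches it, and the recovery of the burn order from $T$ (children discovered in decreasing label order) to get injectivity, finished by the cardinality identity $|{\rm PF}(G)|=|{\rm SPT}(G)|$ --- these are exactly the right ingredients and each step goes through. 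Two points worth making explicit in a full write-up: (i) the edge count uses that every droplet is eventually spent, which holds because a vertex burns only when its count reaches zero, so the dampened edges are counted exactly by ${\rm sum}(\mathcal{P})$; and (ii) in the remaining-edge/$\kappa$-inversion correspondence one should note that $i\neq r$ is automatic (since $i$ has a parent $v$) and that $i$ is the \emph{unique} child of $v$ that is an ancestor of $w$, which is what makes the two maps mutually inverse. With those spelled out the argument is complete.
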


Let $\sum_{\mathcal{P} \in {\rm PF}(G)} q^{{\rm rsum}(\mathcal{P})} $ be the 
{\em reversed sum 
enumerator} for $G$-parking functions. Then 
Theorem \ref{Thm1} establishes the following identity  
\[
 \sum_{\mathcal{P} \in {\rm PF}(G)} q^{{\rm rsum}(\mathcal{P})}
=  \sum_{T \in {\rm SPT}(G)} q^{\kappa(G,T)}.
\]
For $G = K_{n+1}$ with root $0$, 
${\rm PF}(K_{n+1}) = {\rm PF}(n)$ is the set of ordinary parking functions of length $n$ and
the above identity reduces to the identity
\[ \sum_{\mathcal{P} \in {\rm PF}(n)} q^{{\rm rsum}(\mathcal{P})}
=  \sum_{T \in {\rm SPT}(K_{n+1})} q^{{\rm inv}(T)}
\]
proved by Kreweras \cite{K}.

Let
${\rm sPF}(K_{n+1})$ be the set of spherical $K_{n+1}$-parking  functions and
 $\mathcal{U}_n$ be the set of uprooted trees on $[n]$.
As an application of Theorem \ref{Thm1}, we construct a bijection 
$\phi_n : {\rm sPF}(K_{n+1}) 
\longrightarrow \mathcal{U}_n$ and solve a conjecture of 
Dochtermann \cite{D2}. For $\mathcal{P} \in {\rm sPF}(K_{n+1})$, let 
$\widetilde{\mathcal{P}} : V \setminus \{0\}
\longrightarrow \mathbb{N}$ be given by 
$\widetilde{\mathcal{P}}(i) = \mathcal{P}(i) - 1$.
Equivalently, $\mathbf{x}^{\widetilde{\mathcal{P}}} = 
\frac{\mathbf{x}^{\mathcal{P}}}{m_{[n]}}$, where
$m_{[n]}=x_1 \cdots x_n$ is the generator of 
$\mathcal{M}_{K_{n+1}}$ corresponding to $[n]$. 
We say that $\widetilde{\mathcal{P}}$ is the 
{\em reduced spherical $K_{n+1}$-parking
function} associated to $\mathcal{P} \in {\rm sPF}(K_{n+1})$
and $\widetilde{{\rm sPF}}(K_{n+1}) = \{ \widetilde{\mathcal{P}} :
\mathcal{P} \in {\rm sPF}(K_{n+1}) \}$. Clearly,  
$\widetilde{{\rm sPF}}(K_{n+1}) \subseteq {\rm PF}(K_{n+1})$.
Let $K_n = K_{n+1} - \{0\}$ be the complete graph on the 
vertex set $V\setminus \{0\} = [n]$.

\begin{theorem}
 There exists a bijection $\phi_n: {\rm sPF}(K_{n+1}) \longrightarrow 
\mathcal{U}_n$ such that
\[
 {\rm sum}(\mathcal{P}) = {n \choose 2} - \kappa(K_n, \phi_n(\mathcal{P})) +1 , 
\quad \forall ~\mathcal{P} \in {\rm sPF}(K_{n+1}).
\]
 \label{Thm2}
\end{theorem}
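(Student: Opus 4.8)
The plan is to realise $\phi_n$ by transporting the Perkinson--Yang--Yu bijection (Theorem \ref{Thm1}) from $K_{n+1}$ down to the complete graph $K_n$ on $\widetilde V = [n]$, via the reduced parking function $\widetilde{\mathcal P}$. Given $\mathcal P \in {\rm sPF}(K_{n+1})$, I would first pass to $\widetilde{\mathcal P} \in \widetilde{{\rm sPF}}(K_{n+1}) \subseteq {\rm PF}(K_{n+1})$. From the sorted description of spherical functions ($p_{i_1}=1$, $p_{i_j}<j$) one checks that $\widetilde{\mathcal P}$ attains the value $0$ at least twice; let $r \in [n]$ be the \emph{largest} vertex with $\widetilde{\mathcal P}(r)=0$. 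Deleting this coordinate produces $\mathcal Q = \widetilde{\mathcal P}|_{[n]\setminus\{r\}}$, and since removing one $0$ from the sorted sequence of $\widetilde{\mathcal P}$ lowers every surviving sorted bound by one, $\mathcal Q$ is an ordinary parking function of length $n-1$, i.e. a $K_n$-parking function with respect to the root $r$. I then define $\phi_n(\mathcal P)=T$ to be the spanning tree of $K_n$ output by the DFS burning algorithm applied to $\mathcal Q$ on $K_n$ rooted at $r$.

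The core of the argument is to show that $T$ is uprooted, i.e. that every child of $r$ in $T$ is smaller than $r$, and here I would exploit the edge-dampening feature of the DFS rule. The algorithm examines the neighbours of $r$ in strictly decreasing order of label, so it treats every vertex $v>r$ before any vertex $v<r$. By maximality of $r$, each such $v>r$ has $\widetilde{\mathcal P}(v)\ge 1$; hence on first inspection the edge $(r,v)$ is dampened and \emph{removed}, and $r$ moves on without igniting $v$. Once $(r,v)$ is deleted it is gone for good, so $v$ can never be burned directly from $r$ and is not a child of $r$. Thus the only vertices $r$ can ignite directly carry labels $<r$, which is exactly the uprooted condition (when $r=n$ the condition is vacuous).

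It remains to see that $\phi_n$ is a bijection onto $\mathcal U_n$ and to extract the statistic. For injectivity, the root of $T=\phi_n(\mathcal P)$ is by construction precisely $r$, the largest $0$-coordinate of $\widetilde{\mathcal P}$; so from $T$ one reads off $r$, recovers $\mathcal Q$ on $[n]\setminus\{r\}$ by the injectivity of the PYY map for the fixed root $r$, and then reconstructs $\widetilde{\mathcal P}$ (insert $0$ at $r$) and $\mathcal P=\widetilde{\mathcal P}+(1,\dots,1)$. Since $\phi_n$ maps into $\mathcal U_n$ and $|{\rm sPF}(K_{n+1})|=(n-1)^{n-1}=|\mathcal U_n|$, an injection between finite sets of equal size is automatically a bijection. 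For the statistic I apply Theorem \ref{Thm1} to $K_n$: with $g(K_n)=\binom{n}{2}-n+1$ and ${\rm sum}(\mathcal Q)={\rm sum}(\widetilde{\mathcal P})={\rm sum}(\mathcal P)-n$, one gets $\kappa(K_n,T)=g(K_n)-{\rm sum}(\mathcal Q)=\binom{n}{2}+1-{\rm sum}(\mathcal P)$, which rearranges to the claimed ${\rm sum}(\mathcal P)=\binom{n}{2}-\kappa(K_n,\phi_n(\mathcal P))+1$.

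The step I expect to demand the most care is the interplay between the \emph{choice} of $r$ as the largest $0$-vertex and the two things it must simultaneously guarantee: that all children of $r$ fall below $r$ (uprootedness) and that the tree-root is recoverable so that distinct spherical functions cannot collide. The edge-deletion observation makes the forward direction clean, and I believe the genuinely fiddly bookkeeping is only in verifying that the reduced spherical inequalities survive coordinate deletion (so that $\mathcal Q$ is really a parking function) and in invoking $|{\rm sPF}(K_{n+1})|=(n-1)^{n-1}$ to close the bijection without building the inverse map by hand.
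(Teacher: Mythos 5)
Your proposal is correct and follows essentially the same route as the paper: reduce to $\widetilde{\mathcal P}$, take $r$ to be the largest vertex with $\widetilde{\mathcal P}(r)=0$ (noting a second zero must exist), run the Perkinson--Yang--Yu DFS on $K_n$ rooted at $r$, observe that every edge $(r,v)$ with $v>r$ is dampened so the output tree is uprooted, conclude bijectivity from the cardinality count $(n-1)^{n-1}$, and derive the statistic from ${\rm rsum}=\kappa$. The only difference is that you spell out the injectivity and the parking-function check in more detail than the paper does.
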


\begin{proof}
 Let $\mathcal{P} \in {\rm sPF}(K_{n+1})$. Then 
$\widetilde{\mathcal{P}} \in {\rm PF}(K_{n+1})$. Choose
the largest vertex $r$ of $K_n = K_{n+1} - \{0\}$ such that 
$\widetilde{\mathcal{P}}(r) = 0$. We claim that 
$\widetilde{\mathcal{P}}(j) = 0 $ for some $j < r$. Otherwise, 
$\mathcal{P}(i) \ge 2, ~\forall~ i \in [n] \setminus \{r\}$, 
a contradiction to $\mathcal{P} \in {\rm sPF}(K_{n+1})$. Now consider $r$
to be the root of the complete graph $K_n$ on $[n]$. Then
$\widehat{\mathcal{P}}   
= \widetilde{\mathcal{P}} \mid_{[n] \setminus \{r\}}$ 
is a $K_n$-parking function. On applying Theorem \ref{Thm1},
we get a spanning tree $\phi(\widehat{P})$ of the complete graph $K_n$ with root $r$.
Since $\widehat{P}(i) \ge 1$ for $i >r$, all the edges $(r,i)$ are dampened. Hence, 
$\phi(\widehat{P})$ is a uprooted tree on $[n]$ with root $r$. Define 
$\phi_n(\mathcal{P}) = \phi(\widehat{P})$. Clearly, 
$\phi_n: {\rm sPF}(K_{n+1}) \longrightarrow 
\mathcal{U}_n$ is injective. As $|{\rm sPF}(K_{n+1})|= 
 |\mathcal{U}_n|=(n-1)^{n-1}$, it follows that $\phi_n$ is a bijection.
Also,
\[
 {\rm rsum}(\widehat{P}) = g(K_n) - \sum_{i \in [n] \setminus \{r\}} 
\widetilde{\mathcal{P}}(i) = 
\kappa(K_n, \phi(\widehat{P})).
\]
As $g(K_n) = {n \choose 2}-n+1, \widetilde{\mathcal{P}}(i) = 
\mathcal{P}(i) -1$ and $\widetilde{\mathcal{P}}(r) = 0$, we have ${\rm rsum}(\widehat{P}) = 
{n \choose 2}-{\rm sum}(\mathcal{P}) + 1$.   \hfill $\square$
\end{proof}

We now describe the DFS burning algorithm of
 Gaydarov-Hopkins \cite{GH} for multigraphs.
Consider a connected multigraph $G$ on $V= \{0,1,\ldots,n\}$ with root $r$. 
Let $E(i,j)=E(j,i)$ be the  
set of edges between distinct vertices $i$ and $j$.  Fix a total order on $E(i,j)$ 
for all distinct pair $\{i,j\}$ of vertices and write
$E(i,j) = \{ e_{ij}^0, e_{ij}^1, \ldots, e_{ij}^{a_{ij}-1}\}$, 
where $|E(i,j)|=a_{ij}$.  
Thus we assume that edges of the multigraph $G$ are labelled.
Applied to 
an input function $\mathcal{P}: V \setminus \{r\} 
\longrightarrow \mathbb{N}$,
the DFS algorithm for multigraphs gives a subset
${\tt burnt\_\tt vertices}$ of burnt vertices and a subset
${\tt tree\_edges}$ of tree edges with nonnegative labels on them as an output. 
As in the case of DFS algorithm for simple graphs,
we imagine that a fire starts at
the root $r$ and spread to other vertices of $G$ 
according to the depth-first rule.
 If $i$ is a burnt vertex, 
then consider the largest non-burnt
vertex $j$ adjacent to $i$. If $\mathcal{P}(j)  < a_{ij}=|E(i,j)|$,
then $\mathcal{P}(j)$ edges with higher labels, namely $e_{ij}^{a_{ij}-1},
\ldots, e_{ij}^{a_{ij}-\mathcal{P}(j)}$ will get dampened,
the edge $e_{ij}^{a_{ij}-\mathcal{P}(j)-1}$ 
with label $a_{ij}-\mathcal{P}(j)-1$ 
will be added to  
${\tt tree\_edges}$ and $j$ in included in  
${\tt burnt\_\tt vertices}$.
 Now fire will spread from the burnt vertex $j$.
On the other hand, if $\mathcal{P}(j) \ge a_{ij}$, then 
all the edges in $E(i,j)$ get
 dampened and $\mathcal{P}(j)$ reduced to $\mathcal{P}(j) -a_{ij}$.  
The fire continue to 
spread from the burnt vertex $i$ through non-dampened edges. If all the 
edges from $i$ to unburnt vertices get dampened, then the search backtracks.
At the start, ${\tt burnt\_vertices} = \{r\}$ and ${\tt tree\_edges} = \{ \}$.
Gaydarov and Hopkins \cite{GH} extended Theorem \ref{Thm1} to multigraphs using 
the DFS burning algorithm for multigraph. We state their result without proof.

\begin{theorem}[Gaydarov-Hopkins]
Let $G$ be a  multigraph on $V$ with root $r$. If on applying 
DFS burning algorithm  to $\mathcal{P}: V \setminus \{r\} \longrightarrow 
\mathbb{N}$, the subset ${\tt burnt\_vertices}$ of burnt vertices is
$V$, then $\mathcal{P}$ is a $G$-parking function and  
the ${\tt tree\_edges}$ of tree edges with labels form a spanning tree
$\phi(\mathcal{P})$ of $G$. Suppose $\ell(e)$ is the label on 
an edge $e$ of $\phi(\mathcal{P})$. Then the mapping 
$\mathcal{P} \mapsto \phi(\mathcal{P})$ given by DFS burning algorithm induces
a bijection $\phi : {\rm PF}(G) \longrightarrow {\rm SPT}(G)$ such that
\[
 {\rm rsum}(\mathcal{P}) =  
\kappa(G,T) + \sum_{e \in E(T)} \ell(e), \quad {\rm where}\quad T=\phi(\mathcal{P}).
\]
\label{Thm3}
\end{theorem}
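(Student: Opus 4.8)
The plan is to mirror the proof of Theorem~\ref{Thm1} (the simple-graph case), upgrading every step so that it records the labels produced by the parallel edges; setting all $a_{ij}\le 1$ must then collapse the argument back to Perkinson--Yang--Yu, since a tree edge arises only when the residual value is $0$, so it receives label $\ell(e)=a_{ij}-0-1=0$ and the correction term $\sum_{e}\ell(e)$ vanishes. I would first note that the burning process consumes droplets by exactly the same bookkeeping as in the simple case: whenever a burnt vertex $i$ examines an unburnt neighbour $j$, the value $\mathcal{P}(j)$ drops by the number of edges of $E(i,j)$ that are dampened, so the total number of \emph{dampened} edges equals ${\rm sum}(\mathcal{P})$. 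The decisive reformulation I would extract is
\[
{\rm rsum}(\mathcal{P}) = g(G)-{\rm sum}(\mathcal{P}) = |E(G)| - n - (\#\ \text{dampened edges}) = \#\ \text{untouched edges},
\]
where an edge is \emph{untouched} if it is neither a tree edge nor dampened, i.e.\ both its endpoints are already burnt before the fire ever tests it. The whole theorem then reduces to counting untouched edges.

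For the structural part I would argue, as in the simple case, that each non-root vertex is burnt once and supplies exactly one tree edge, so the collected tree edges form a spanning tree $T=\phi(\mathcal{P})$; the edge chosen out of the bundle $E({\rm par}_T(j),j)$ carries label $\ell(e_j)=a_{{\rm par}_T(j),j}-b_j-1$, where $b_j$ is the residual value of $j$ when it ignites. To identify the image I would characterise $G$-parking functions as exactly those $\mathcal{P}$ for which the fire reaches all of $V$: if a nonempty $A$ stays unburnt, then at termination every edge from $A$ to its complement is dampened, forcing $\mathcal{P}(i)\ge d_A(i)$ for all $i\in A$ and violating the parking condition; conversely, if everything burns, the first vertex $j\in A$ to burn has all of its dampened edges and its tree edge directed into $V\setminus A$, so $\mathcal{P}(j)+1\le d_A(j)$. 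Injectivity of $\phi$ follows because the DFS order, the residuals $b_j=a_{{\rm par}_T(j),j}-\ell(e_j)-1$, and hence all of $\mathcal{P}$, are reconstructible from the pair $(T,\ell)$; surjectivity is then free from the equality $|{\rm PF}(G)|=|{\rm SPT}(G)|$ recalled in the introduction, as $\phi$ is an injection between finite sets of equal size.

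The heart of the matter is the evaluation $\#\ \text{untouched edges} = \kappa(G,T)+\sum_{e\in E(T)}\ell(e)$, which I would obtain by a dichotomy on bundles. A tree bundle $E({\rm par}_T(j),j)$ splits into $b_j$ dampened edges, one tree edge, and exactly $a_{{\rm par}_T(j),j}-b_j-1=\ell(e_j)$ further edges that are untouched (both endpoints burnt once $j$ ignites); summing over $j$ produces the $\sum_{e}\ell(e)$ term. A non-tree bundle is all-or-nothing: when examined, the residual either exceeds its size, dampening the whole bundle, or it does not, which would instead create a tree edge — so a non-tree bundle is either fully dampened or entirely untouched. The remaining step, which I expect to be the main obstacle, is the DFS structural lemma pinning down which non-tree bundles stay untouched: writing $u$ for the endpoint that burns first, $E(u,v)$ is untouched precisely when $u$ is a proper ancestor of $v$ in $T$ and the child $i$ of $u$ on the $u$-$v$ path satisfies $i>v$. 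This is exactly where the ``examine the largest unburnt neighbour'' rule must be used with care: if $u$ is not an ancestor of $v$ then $v$ remains unburnt throughout $u$'s active phase and is necessarily examined, while among descendants the comparison $i>v$ decides whether $u$ descends into the subtree of $i$ (burning $v$ before reaching it) or reaches $v$ first. Matching $(u,v)=({\rm par}_T(i),v)$ identifies these untouched non-tree bundles bijectively with the inversions $(i,v)$ of $T$, each contributing its multiplicity $|E({\rm par}_T(i),v)|$, and the total is precisely $\kappa(G,T)$. Combining the two bundle types gives ${\rm rsum}(\mathcal{P})=\kappa(G,T)+\sum_{e\in E(T)}\ell(e)$, as required.
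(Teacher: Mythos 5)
The paper does not prove this statement: it is quoted from Gaydarov--Hopkins \cite{GH} with the explicit remark ``We state their result without proof,'' so there is no internal argument to compare yours against. Your proposal is, however, a correct reconstruction of the cited proof and follows the same strategy as Perkinson--Yang--Yu/Gaydarov--Hopkins: the reduction ${\rm rsum}(\mathcal{P})=\#\{\text{untouched edges}\}$ (valid because each vertex's droplets are exactly exhausted by the dampened edges incident to it when it burns), the all-or-nothing dichotomy for non-tree bundles, the observation that a tree bundle contributes exactly $\ell(e_j)=a_{{\rm par}_T(j),j}-b_j-1$ untouched edges, and the DFS structural lemma identifying the untouched non-tree bundles $E(u,v)$ with the pairs where $u$ is a proper ancestor of $v$ whose child $i$ on the $u$--$v$ path satisfies $i>v$, which biject with the $\kappa$-inversions $(i,v)$ counted with multiplicity $|E({\rm par}_T(i),v)|$. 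That last lemma is the only delicate point and your justification of it (via the ``largest unburnt neighbour'' sweep and the fact that a recursive call burns the entire subtree of a child before control returns) is the right one; note only that the paper's displayed formula for $\kappa(G,T)$ omits the ancestor restriction, which you correctly reinstate.
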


Consider the complete multigraph $K_{n+1}^{a,b}$ on $V$. Let
${\rm sPF}(K_{n+1}^{a,b})$ be the set of spherical $K_{n+1}^{a,b}$-parking  functions. 
Let
 $\mathcal{U}_n^b$ be the set of uprooted tree $T$ on $[n]$
with label $\ell : E(T) \longrightarrow \{0,1,\ldots, b-1\}$ on the edges of
$T$ and 
a weight $\omega(r) \in \{0,1,\ldots,b-1\}$ assigned to .the root $r$ of $T$.
Clearly, $|\mathcal{U}_n^b|=b^n |\mathcal{U}_n| = b^n(n-1)^{n-1}$.
As an application of Theorem \ref{Thm3}, we construct a bijection 
\[
\phi_n^b : {\rm sPF}(K_{n+1}^{a,b}) 
\longrightarrow \mathcal{U}_n^b
\] 
extending Theorem \ref{Thm2}.
Let $m_{[n]}=(x_1 \cdots x_n)^a$ be the generator of 
$\mathcal{M}_{K_{n+1}^{a,b}}$ corresponding to $[n]$. 
 The 
{\em reduced spherical $K_{n+1}^{a,b}$-parking
function} $\widetilde{\mathcal{P}}$ associated to $\mathcal{P} \in {\rm sPF}(K_{n+1}^{a,b})$
is given by $\mathbf{x}^{\widetilde{\mathcal{P}}} = 
\frac{\mathbf{x}^{\mathcal{P}}}{m_{[n]}}$. In other words,
 $\widetilde{\mathcal{P}}(i) = \mathcal{P}(i) - a, ~\forall ~i \in [n]$.
Let  $\widetilde{{\rm sPF}}(K_{n+1}^{a,b}) = \{ \widetilde{\mathcal{P}} :
\mathcal{P} \in {\rm sPF}(K_{n+1}^{a,b}) \}$. Clearly,  
$\widetilde{{\rm sPF}}(K_{n+1}^{a,b}) \subseteq {\rm PF}(K_{n+1}^{a,b})$.
Let $K_n^b = K_{n+1}^{a,b} - \{0\}$ be the complete multigraph on the 
vertex set $V\setminus \{0\} = [n]$ such that $|E(i,j)|=b$ for every pair 
$\{i,j\}$ of vertices.

\begin{theorem}
 There exists a bijection $\phi_n^b: {\rm sPF}(K_{n+1}^{a,b}) \longrightarrow 
\mathcal{U}_n^b$ such that
\[
 {\rm rsum}(\mathcal{P}) +\omega(r)+1 =  \kappa(K_n, T) + \sum_{e \in E(T)} \ell(e) , 
\quad \forall ~\mathcal{P} \in {\rm sPF}(K_{n+1}),
\]
where $T=\phi_n^b(\mathcal{P})$ and $r$ is the root of $T$. 
 \label{Thm4}
\end{theorem}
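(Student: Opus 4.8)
The plan is to mirror the proof of Theorem \ref{Thm2}, replacing the Perkinson--Yang--Yu algorithm (Theorem \ref{Thm1}) by the Gaydarov--Hopkins algorithm (Theorem \ref{Thm3}) applied to the multigraph $K_n^b$, and to read off the edge labels and the root weight from the resulting spanning tree. Given $\mathcal{P} \in {\rm sPF}(K_{n+1}^{a,b})$, I first pass to the reduced function $\widetilde{\mathcal{P}}(i) = \mathcal{P}(i) - a \ge 0$. The root will be taken to be $r$, the largest vertex with $\widetilde{\mathcal{P}}(r) < b$, the root weight will be $\omega(r) = \widetilde{\mathcal{P}}(r) \in \{0,\ldots,b-1\}$, and I set $\widehat{\mathcal{P}} = \widetilde{\mathcal{P}}\mid_{[n]\setminus\{r\}}$ and let $T = \phi(\widehat{\mathcal{P}})$ be the labelled spanning tree produced by Theorem \ref{Thm3} for $K_n^b$ rooted at $r$, with edge labels $\ell(e) \in \{0,\ldots,b-1\}$. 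The candidate bijection is $\phi_n^b(\mathcal{P}) = (T,\ell,\omega(r))$.

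First I would record the combinatorial description of spherical functions that makes the construction work. Since the only generator of $\mathcal{M}_{K_{n+1}^{a,b}}$ of size $n$ is $(x_1\cdots x_n)^a$ while every generator of size $\le n-1$ would place $\mathbf{x}^{\mathcal{P}}$ in $\mathcal{M}_{K_{n+1}^{a,b}}^{(n-2)}$, membership in $\mathcal{M}_{K_{n+1}^{a,b}} \setminus \mathcal{M}_{K_{n+1}^{a,b}}^{(n-2)}$ forces $\widetilde{\mathcal{P}}(i)\ge 0$ together with avoidance of every generator of size $\le n-1$. Translating the latter into the nondecreasing rearrangement $q_1 \le \cdots \le q_n$ of $\widetilde{\mathcal{P}}$, I obtain precisely $q_{j} < (j-1)b$ for $2 \le j \le n$; in particular $q_1 \le q_2 < b$, so at least two reduced values lie below $b$ and the root $r$ is well defined. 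Deleting one value below $b$ from $(q_1,\ldots,q_n)$ yields a sequence whose $m$-th smallest entry is at most $q_{m+1} < mb$, which is exactly the condition for $\widehat{\mathcal{P}}$ to be a $K_n^b$-parking function rooted at $r$; hence Theorem \ref{Thm3} applies and outputs a genuine spanning tree $T$.

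The step I expect to be the main obstacle is verifying that $T$ is \emph{uprooted}, i.e.\ that every child of $r$ is smaller than $r$. Here I would use the choice of $r$: for every $j > r$ we have $\widehat{\mathcal{P}}(j) = \widetilde{\mathcal{P}}(j) \ge b = a_{rj}$, so the very first time the fire at $r$ inspects $j$ the algorithm dampens \emph{all} $b$ parallel edges in $E(r,j)$ at once and deletes them. After this no edge between $r$ and $j$ survives, so $j$ can never be burnt from $r$ and is never a child of $r$; thus all children of $r$ are $< r$ and $T \in \mathcal{U}_n^b$. (For $b=1$ this recovers the dampening argument of Theorem \ref{Thm2}.) Together with $\ell(e) \in \{0,\ldots,b-1\}$ and $\omega(r) \in \{0,\ldots,b-1\}$, this shows $\phi_n^b$ is well defined into $\mathcal{U}_n^b$.

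Finally, injectivity follows by reversing the construction: from $(T,\ell,\omega(r))$ one reads off $r$ as the root of $T$, recovers $\widehat{\mathcal{P}}$ via the inverse of the Theorem \ref{Thm3} bijection for $K_n^b$ at $r$, sets $\widetilde{\mathcal{P}}(r) = \omega(r)$, and puts $\mathcal{P}(i) = \widetilde{\mathcal{P}}(i) + a$. Since $|{\rm sPF}(K_{n+1}^{a,b})| = (n-1)^{n-1}b^n = |\mathcal{U}_n^b|$ by Remark \ref{Rem1}(3), injectivity forces $\phi_n^b$ to be a bijection. The numerical identity is then bookkeeping: using $g(K_{n+1}^{a,b}) = na + b{n \choose 2} - n$, $g(K_n^b) = b{n \choose 2} - n + 1$ and $\mathcal{P}(r) = \omega(r) + a$, one gets ${\rm sum}(\widehat{\mathcal{P}}) = {\rm sum}(\mathcal{P}) - na - \omega(r)$, hence ${\rm rsum}(\widehat{\mathcal{P}}) = {\rm rsum}(\mathcal{P}) + \omega(r) + 1$; feeding $\widehat{\mathcal{P}}$ into the reverse-sum identity of Theorem \ref{Thm3} for $K_n^b$ gives ${\rm rsum}(\widehat{\mathcal{P}}) = \kappa(K_n^b,T) + \sum_{e \in E(T)}\ell(e)$, which is the asserted relation.
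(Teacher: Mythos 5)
Your proposal is correct and follows essentially the same route as the paper's proof: reduce by $a$, root at the largest vertex $r$ with $\widetilde{\mathcal{P}}(r)<b$, set $\omega(r)=\widetilde{\mathcal{P}}(r)$, apply the Gaydarov--Hopkins algorithm to $\widehat{\mathcal{P}}$ on $K_n^b$, and conclude bijectivity from the cardinality count $|{\rm sPF}(K_{n+1}^{a,b})|=b^n(n-1)^{n-1}=|\mathcal{U}_n^b|$. You simply supply more detail than the paper does (the rearrangement characterization of spherical functions, the dampening argument for uprootedness, and the explicit genus bookkeeping), all of which checks out.
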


\begin{proof}
 Let $\mathcal{P} \in {\rm sPF}(K_{n+1}^{a,b})$. Then 
$\widetilde{\mathcal{P}} \in {\rm PF}(K_{n+1}^{a,b})$. Choose
the largest vertex $r$ of $K_n^b = K_{n+1}^{a,b} 
\setminus \{0\}$ such that 
$\widetilde{\mathcal{P}}(r) < b$. We claim that 
$\widetilde{\mathcal{P}}(j) <b $ for some $j < r$. Otherwise, 
$\mathcal{P}(i) \ge a+b, ~\forall~ i \in [n] \setminus \{r\}$, 
a contradiction to $\mathcal{P} \in {\rm sPF}(K_{n+1}^{a,b})$. 
Now consider $r$
to be the root of the complete multigraph $K_n^b$ on $[n]$. Then
$\widehat{\mathcal{P}}   
= \widetilde{\mathcal{P}} \mid_{[n] \setminus \{r\}}$ 
is a $K_n^b$-parking function. 
On applying the DFS algorithm (Theorem \ref{Thm3}),
we get  $\phi(\widehat{P}) \in \mathcal{U}_n^b$ with root $r$. 
Set weight $\omega(r) = 
\widetilde{\mathcal{P}}(r)$.
The mapping $\phi_n^b: {\rm sPF}(K_{n+1}^{a,b}) \longrightarrow 
\mathcal{U}_n^b$ given by 
$\phi_n^b(\mathcal{P}) = \phi(\widehat{P})$ is clearly 
 injective. As
 \[
|{\rm sPF}(K_{n+1}^{a,b})|= 
 |\mathcal{U}_n^b|=b^n(n-1)^{n-1},
\]
 it follows that $\phi_n^b$ is a bijection.
Also,
\[
 {\rm rsum}(\widehat{P}) = g(K_n^b) - \sum_{i \in [n] \setminus \{r\}} 
\widetilde{\mathcal{P}}(i) = 
\kappa(K_n, \phi(\widehat{P})) + 
\sum_{e \in E(\phi(\widehat{P}))} \ell(e).
\]
Since ${\rm rsum}(\mathcal{P}) = 
g(K_{n+1}^{a,b}) -\sum_{i \in [n]} \mathcal{P}(i)$, we
verify that ${\rm rsum}(\widehat{P}) = 
{\rm rsum}(\mathcal{P}) +\omega(r) +1$.
 \hfill $\square$
\end{proof}
Let $G$ be a connected simple graph on $V$ with root $0$. 
Let $m_{[n]}$ be the generator of 
$\mathcal{M}_G$ corresponding to $[n]$. Then to
each spherical $G$-parking function $\mathcal{P}$,
we associate a 
{\em reduced spherical $G$-parking function} 
$\widetilde{\mathcal{P}}$ by 
$\mathbf{x}^{\widetilde{\mathcal{P}}} = 
\frac{\mathbf{x}^{\mathcal{P}}}{m_{[n]}}$. 
Let $\widetilde{{\rm sPF}}(G) = 
\{ \widetilde{\mathcal{P}} : \mathcal{P} \in {\rm sPF}(G) \}$
and $\mathcal{U}_{G - \{0\}}$ be the set of all uprooted spanning trees of
$G - \{0\}$.
\begin{corollary}
 Let $G$ be a connected simple graph on $V$ with a root $0$ such that
$\widetilde{{\rm sPF}}(G) \subseteq {\rm PF}(G)$. Then there
exists an injective map $\phi_G : {\rm sPF}(G) \longrightarrow \mathcal{U}_{G - \{0\}}$
induced by DFS-algorithm.
\label{Cor3} 
\end{corollary}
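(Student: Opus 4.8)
The plan is to mimic the proof of Theorem \ref{Thm2}, replacing the explicit ``two smallest coordinates'' computation available for $K_{n+1}$ by a divisibility argument valid for an arbitrary simple graph. Fix $\mathcal{P} \in {\rm sPF}(G)$. By definition $\mathbf{x}^{\mathcal{P}} \in \mathcal{M}_G \setminus \mathcal{M}_G^{(n-2)}$; since the only generator $m_A$ of $\mathcal{M}_G$ with $|A| = n$ is $m_{[n]} = \prod_{i \in [n]} x_i^{a_{i0}}$, this forces $\mathbf{x}^{\mathcal{P}}$ to be divisible by $m_{[n]}$ but by no generator $m_A$ with $\emptyset \ne A \subsetneq [n]$. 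Recording these two facts yields $\mathcal{P}(i) \ge a_{i0}$ for all $i$, and, for every proper nonempty $A \subseteq [n]$, some $i \in A$ with $\mathcal{P}(i) < d_A^G(i)$. The hypothesis $\widetilde{{\rm sPF}}(G) \subseteq {\rm PF}(G)$ guarantees $\widetilde{\mathcal{P}} \in {\rm PF}(G)$, where $\widetilde{\mathcal{P}}(i) = \mathcal{P}(i) - a_{i0} \ge 0$.

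First I would choose the root. Applying $\mathbf{x}^{\widetilde{\mathcal{P}}} \notin \mathcal{M}_G$ to $A = [n]$ shows some vertex adjacent to $0$ has $\widetilde{\mathcal{P}}$-value $0$; let $r$ be the largest vertex of $[n]$ with $\widetilde{\mathcal{P}}(r) = 0$, regarded as the root of $G - \{0\}$, and set $\widehat{P} = \widetilde{\mathcal{P}}\!\mid_{[n] \setminus \{r\}}$. The key step is to show $\widehat{P}$ is a $(G-\{0\})$-parking function with respect to $r$. For this I would use the identities $d_A^{G-\{0\}}(i) = d_A^G(i) - a_{i0}$ and $\widetilde{\mathcal{P}}(i) = \mathcal{P}(i) - a_{i0}$, valid for every $A \subseteq [n]$ and $i \in A$, which together give the equivalence $\mathcal{P}(i) < d_A^G(i) \iff \widetilde{\mathcal{P}}(i) < d_A^{G-\{0\}}(i)$. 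Hence, for any nonempty $A \subseteq [n] \setminus \{r\}$ (a proper subset of $[n]$), the spherical condition produces $i \in A$ with $\mathcal{P}(i) < d_A^G(i)$, whence $\widehat{P}(i) = \widetilde{\mathcal{P}}(i) < d_A^{G-\{0\}}(i)$; this is exactly the required parking condition.

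With $\widehat{P} \in {\rm PF}(G - \{0\})$ in hand, I would apply Theorem \ref{Thm1} to $G - \{0\}$ rooted at $r$ to obtain a spanning tree $T = \phi(\widehat{P})$. Here one uses that $G - \{0\}$ is connected, which I would check separately: if $G - \{0\}$ had a component $C$ not containing $r$, then taking $A = C$ in the spherical condition would force some $i \in C$ with $\mathcal{P}(i) < d_C^G(i) = a_{i0}$, contradicting $\mathcal{P}(i) \ge a_{i0}$, so in fact $\mathbf{x}^{\mathcal{P}} \in \mathcal{M}_G^{(n-2)}$ and ${\rm sPF}(G) = \emptyset$, making the claim vacuous. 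The tree $T$ is uprooted because, by maximality of $r$, every $j > r$ has $\widehat{P}(j) = \widetilde{\mathcal{P}}(j) \ge 1$, so when the DFS fire reaches $r$ each edge from $r$ to a larger neighbour is dampened; thus all children of $r$ in $T$ are smaller than $r$. Setting $\phi_G(\mathcal{P}) = T$ then defines a map ${\rm sPF}(G) \to \mathcal{U}_{G - \{0\}}$.

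Finally, injectivity follows by reversing the construction: given $T \in \mathcal{U}_{G - \{0\}}$ its root $r$ is part of the data, so the inverse of the Perkinson--Yang--Yu bijection for $G - \{0\}$ rooted at $r$ recovers $\widehat{P}$, setting $\widetilde{\mathcal{P}}(r) = 0$ recovers $\widetilde{\mathcal{P}}$, and $\mathcal{P}(i) = \widetilde{\mathcal{P}}(i) + a_{i0}$ recovers $\mathcal{P}$; since every step is determined, two spherical parking functions with the same image coincide. I expect the main obstacle to be the degree bookkeeping in the key step --- correctly relating $d_A^{G-\{0\}}$, $d_A^G$, $\widehat{P}$ and the spherical non-divisibility conditions --- whereas the uprooted property and the injectivity are direct analogues of the argument in Theorem \ref{Thm2}.
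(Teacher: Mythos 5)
Your proposal is correct and follows essentially the same route as the paper, whose entire proof of Corollary \ref{Cor3} is the instruction ``Proceed as in Theorem \ref{Thm2}'': reduce $\mathcal{P}$ by $m_{[n]}$, take the largest vertex $r$ with $\widetilde{\mathcal{P}}(r)=0$ as root, restrict to get $\widehat{P}$, and apply the Perkinson--Yang--Yu bijection to $G-\{0\}$. The extra bookkeeping you supply --- the identity $d_A^{G-\{0\}}(i)=d_A^G(i)-a_{i0}$ showing that the spherical non-divisibility conditions for proper $A\subsetneq[n]$ are exactly the $(G-\{0\},r)$-parking conditions for $\widehat{P}$, and the observation that ${\rm sPF}(G)=\emptyset$ when $G-\{0\}$ is disconnected --- is precisely what the paper leaves implicit, so this is a faithful elaboration rather than a different argument.
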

\begin{proof}
 Proceed as in Theorem \ref{Thm2}. \hfill $\square$
\end{proof}
In the next section, we shall see that $\phi_G$ need not be surjective.

\section{Spherical parking functions}
Let $G$ be a connected graph on $V=\{0,1,\ldots,n\}$ with root $0$.
As stated in the Introduction, 
$\mathcal{P} : \widetilde{V}=V \setminus \{0\} \longrightarrow 
\mathbb{N}$ is a spherical $G$-parking function if 
$\mathbf{x}^{\mathcal{P}} = 
\prod_{i \in [n]} x_i^{\mathcal{P}(i)} \in \mathcal{M}_G \setminus 
\mathcal{M}_G^{(n-2)}$. Let ${\rm PF}(G)$ (or ${\rm sPF}(G)$) be the set of 
$G$-parking functions (respectively, 
spherical $G$-parking functions). The 
$G$-parking function ideal $\mathcal{M}_G$ and 
its $(n-2)^{th}$ skeleton 
$\mathcal{M}_G^{(n-2)}$ can be defined even 
for disconnected graphs. Note that
if $G$ is connected but $G - \{0\}$ is disconnected, then 
${\rm PF}(G) \ne \emptyset$ although ${\rm sPF}(G) = \emptyset $.

Let $e_0$ be an edge of $G$ joining the root $0$ 
to another vertex. We shall compare
${\rm sPF}(G)$ with ${\rm sPF}(G^{\prime})$, 
where $G^{\prime}=  G - \{e_0\}$.
After renumbering vertices, we may assume 
that $e_0=e_{0,n}$ is an edge joining the root $0$
with $n$.
\begin{lemma}
 Let $G$ be a connected graph on $V$ and $G^{\prime} =
 G - \{e_0\}$. Then 
\[
 \mathcal{M}_{G^{\prime}} = ( \mathcal{M}_G : x_n ) = 
\{ z \in R : z\cdot x_n \in \mathcal{M}_G \}.
\]
Further, the multiplication map $\mu_{x_n} :
\{ \mathbf{x}^{\mathcal{P}} : \mathcal{P} \in {\rm sPF}(G^{\prime})\}
\longrightarrow \{ \mathbf{x}^{\mathcal{P}} : \mathcal{P} \in {\rm sPF}(G)\}$ induced by $x_n$ is a bijection. In particular,
$|{\rm sPF}(G) | = |{\rm sPF}(G^{\prime})|$.
\label{Lem2}
\end{lemma}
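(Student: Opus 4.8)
The plan is to track precisely how each generator $m_A$ of the $G$-parking function ideal changes when the edge $e_0 = e_{0,n}$ is removed, and then to read off both assertions from the resulting colon relations. Write $d_A^G(i)$ and $d_A^{G'}(i)$ for the outdegree counts in $G$ and in $G' = G - \{e_0\}$. Since every $A \subseteq [n]$ omits the root $0$, the vertex $0$ always lies in $V \setminus A$; consequently deleting the single edge $e_{0,n}$ leaves $d_A(i)$ unchanged for every $i \ne n$ and lowers $d_A(n)$ by exactly one whenever $n \in A$. Hence $m_A^{G'} = m_A^G$ when $n \notin A$, while $m_A^{G'} = m_A^G / x_n$ when $n \in A$; here $d_A^G(n) \ge a_{n,0} \ge 1$ because $e_{0,n} \in E(G)$, so the division is legitimate.

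First I would establish the ideal identity. For a monomial ideal $I = \langle f_1,\ldots,f_r\rangle$ and a monomial $g$ one has the standard colon formula $(I : g) = \langle f_i/\gcd(f_i,g) : 1 \le i \le r\rangle$. Applying this with $g = x_n$ gives
\[
(\mathcal{M}_G : x_n) = \left\langle m_A^G/\gcd(m_A^G, x_n) : \emptyset \ne A \subseteq [n]\right\rangle .
\]
By the degree computation, $\gcd(m_A^G, x_n) = 1$ when $n \notin A$ and $\gcd(m_A^G, x_n) = x_n$ when $n \in A$, so these generators are exactly the $m_A^{G'}$ found above; therefore $(\mathcal{M}_G : x_n) = \mathcal{M}_{G'}$. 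The identical argument, now restricted to subsets with $|A| \le n-1$, yields the companion identity $(\mathcal{M}_G^{(n-2)} : x_n) = \mathcal{M}_{G'}^{(n-2)}$ for the skeleta.

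With both colon relations in hand the bijection is almost formal. For any monomial $\mathbf{x}^{\mathcal{P}}$ one has $\mathbf{x}^{\mathcal{P}} \in \mathcal{M}_{G'}$ iff $x_n\mathbf{x}^{\mathcal{P}} \in \mathcal{M}_G$, and $\mathbf{x}^{\mathcal{P}} \notin \mathcal{M}_{G'}^{(n-2)}$ iff $x_n\mathbf{x}^{\mathcal{P}} \notin \mathcal{M}_G^{(n-2)}$. Combining these, $\mathcal{P} \in {\rm sPF}(G')$ if and only if $\mu_{x_n}(\mathbf{x}^{\mathcal{P}}) = x_n\mathbf{x}^{\mathcal{P}}$ is of the form $\mathbf{x}^{\mathcal{Q}}$ with $\mathcal{Q} \in {\rm sPF}(G)$, which shows $\mu_{x_n}$ is a well-defined injection into the target set. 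For surjectivity I would use that $\mathcal{M}_G = \mathcal{M}_G^{(n-2)} + \langle m_{[n]}^G\rangle$, so any $\mathbf{x}^{\mathcal{Q}}$ with $\mathcal{Q} \in {\rm sPF}(G)$ is divisible by $m_{[n]}^G = \prod_{i=1}^n x_i^{a_{i,0}}$ (and by no $m_A^G$ with $|A|\le n-1$); in particular $x_n \mid m_{[n]}^G \mid \mathbf{x}^{\mathcal{Q}}$ since $a_{n,0}\ge 1$, so $\mathbf{x}^{\mathcal{Q}} = x_n\mathbf{x}^{\mathcal{P}}$ for a genuine monomial, and the equivalence places $\mathcal{P}$ in ${\rm sPF}(G')$. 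Counting then gives $|{\rm sPF}(G)| = |{\rm sPF}(G')|$.

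The single place where the hypothesis that $e_0$ passes through the root is indispensable is precisely this last divisibility $x_n \mid m_{[n]}^G$: it is what forces every target monomial into the image of $\mu_{x_n}$, and I expect verifying this point—rather than the routine colon computations—to be the crux of the argument. Indeed, for an edge not through the root the corresponding generator need not be divisible by the relevant variable, and the clean bijection breaks down, which is consistent with the genuinely different count obtained later for $K_{n+1}-\{e_1\}$.
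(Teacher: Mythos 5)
Your proof is correct and follows the same overall route as the paper: both arguments rest on the observation that $m_A^{G'}=m_A^G$ for $n\notin A$ and $m_A^{G'}=m_A^G/x_n$ for $n\in A$, which yields the two colon identities $(\mathcal{M}_G:x_n)=\mathcal{M}_{G'}$ and $(\mathcal{M}_G^{(n-2)}:x_n)=\mathcal{M}_{G'}^{(n-2)}$. The only difference is in how the bijection is extracted. The paper packages the colon identities into two short exact sequences $0\to R/\mathcal{M}_{G'}\to R/\mathcal{M}_G\to R/\langle\mathcal{M}_G,x_n\rangle\to 0$ (and likewise for the skeleta) and uses the equality $\langle\mathcal{M}_G,x_n\rangle=\langle\mathcal{M}_G^{(n-2)},x_n\rangle$ to conclude that $\mu_{x_n}$ identifies the kernels of the two projections, whose monomial bases are exactly the spherical parking functions. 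You instead argue directly on monomials, with surjectivity coming from the divisibility $x_n\mid m_{[n]}^G\mid\mathbf{x}^{\mathcal{Q}}$; this is the same fact underlying the paper's identity $\langle\mathcal{M}_G,x_n\rangle=\langle\mathcal{M}_G^{(n-2)},x_n\rangle$, just used more elementarily. Your version is more self-contained and makes explicit where the hypothesis that $e_0$ passes through the root enters; the paper's version is slicker once the homological framework is set up. Both are complete.
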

\begin{proof}
 For $\emptyset \ne  A \subseteq [n]$, let $m_A$ and $m_A^{\prime}$ be
 the generators of $\mathcal{M}_G$ and 
$\mathcal{M}_{G^{\prime}}$, respectively.  Clearly, 
$m_A = m_A^{\prime}$ if $n \notin A$ and
$m_A = m_A^{\prime} x_n$ if $n \in A$. This shows that 
$\mathcal{M}_{G^{\prime}} = ( \mathcal{M}_G : x_n )$. Also,
$\mathcal{M}_{G^{\prime}}^{(n-2)} = ( \mathcal{M}_G^{(n-2)} : x_n )$. 
Thus the natural sequences
of $R$-modules (or $\mathbb{K}$-vectors spaces)
\[
0 \rightarrow \frac{R}{\mathcal{M}_{G^{\prime}}} 
\stackrel{\mu_{x_n}}{\rightarrow} 
\frac{R}{\mathcal{M}_G} \rightarrow 
\frac{R}{\langle \mathcal{M}_G, ~x_n \rangle} \rightarrow 0 
\quad{\rm and}\quad
 0 \rightarrow \frac{R}{\mathcal{M}_{G^{\prime}}^{(n-2)}} 
\stackrel{\mu_{x_n}}{\rightarrow} 
\frac{R}{\mathcal{M}_G^{(n-2)}} \rightarrow 
\frac{R}{\langle \mathcal{M}_G^{(n-2)}, ~x_n \rangle} \rightarrow 0 
\]
 are short exact. Let $\alpha : \frac{R}{\mathcal{M}_{G^{\prime}}^{(n-2)}}
\rightarrow \frac{R}{\mathcal{M}_{G^{\prime}}}$ and 
$\beta : \frac{R}{\mathcal{M}_G^{(n-2)}}
\rightarrow \frac{R}{\mathcal{M}_G}$ be the natural projections.
Since $\langle \mathcal{M}_G, ~x_n \rangle = 
\langle \mathcal{M}_G^{(n-2)}, ~x_n \rangle $, the multiplication map 
$\mu_{x_n}$ induces an isomorphism 
${\rm ker}(\alpha)
\stackrel{\sim}{\rightarrow} {\rm ker}(\beta)$ 
between kernels ${\rm ker}(\alpha)$ and ${\rm ker}(\beta)$.
Also $\{ \mathbf{x}^{\mathcal{P}} : \mathcal{P} \in {\rm sPF}(G^{\prime})\}$ and $\{ \mathbf{x}^{\mathcal{P}} : \mathcal{P} \in {\rm sPF}(G)\}$ are 
monomial basis of ${\rm ker}(\alpha)$
and ${\rm ker}(\beta)$, respectively. Thus $\mu_{x_n}$ induces 
a bijection between the bases. 
\hfill $\square$  
\end{proof}
We now give a few applications of the  Lemma \ref{Lem2}.
\begin{proposition}
 Let $E$ be the set of all edges  of
 $K_{n+1}$ or $K_{n+1}^{a,b}$
 through the root $0$. Then
\begin{enumerate}
 \item[{\rm (1)}] $|{\rm sPF}(K_{n+1} - E)| 
 = |{\rm sPF}(K_{n+1})|$.
\item[{\rm (2)}] $|{\rm sPF}(K_{n+1}^{a,b} - E)| 
= |{\rm sPF}(K_{n+1}^{a,b})|$.
\item[{\rm (3)}] $|{\rm sPF}(K_{n+1}^{a,b})| = b^n (n-1)^{n-1}$.
\end{enumerate}
\label{Prop3} 
\end{proposition}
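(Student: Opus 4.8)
The plan is to read (3) off the dimension counts already in hand, and to obtain (1) and (2) by iterating Lemma \ref{Lem2}. Note first that $K_{n+1}=K_{n+1}^{1,1}$, so (1) is literally the $a=b=1$ specialization of (2); it therefore suffices to establish (2) and (3).

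For (3) I would use the identity from the Introduction that the standard monomials of $\frac{R}{\mathcal{M}_G^{(n-2)}}$ are indexed by ${\rm PF}(G)$ and ${\rm sPF}(G)$ together, so that $\dim_{\mathbb{K}}\!\left(\frac{R}{\mathcal{M}_G^{(n-2)}}\right) = |{\rm PF}(G)| + |{\rm sPF}(G)|$, whence $|{\rm sPF}(G)| = \dim_{\mathbb{K}}\!\left(\frac{R}{\mathcal{M}_G^{(n-2)}}\right) - \dim_{\mathbb{K}}\!\left(\frac{R}{\mathcal{M}_G}\right)$. Taking $G = K_{n+1}^{a,b}$ and inserting the two formulas from Corollary \ref{Cor2}, namely $\dim_{\mathbb{K}}\!\left(\frac{R}{\mathcal{M}_{K_{n+1}^{a,b}}}\right) = a(a+nb)^{n-1}$ and $\dim_{\mathbb{K}}\!\left(\frac{R}{\mathcal{M}_{K_{n+1}^{a,b}}^{(n-2)}}\right) = a(a+nb)^{n-1} + (n-1)^{n-1}b^n$, the two copies of $a(a+nb)^{n-1}$ cancel and leave $|{\rm sPF}(K_{n+1}^{a,b})| = (n-1)^{n-1}b^n$, which is (3).

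For (2) I would list the edges through the root as $e^{(1)},\ldots,e^{(N)}$, where $N=an$ is their total number, and form the chain $G_0 = K_{n+1}^{a,b}$, $G_i = G_{i-1} - \{e^{(i)}\}$, so that $G_N = K_{n+1}^{a,b} - E$. Each $e^{(i)}$ joins the root to another vertex, so after the renumbering used in Lemma \ref{Lem2} that lemma yields $|{\rm sPF}(G_{i-1})| = |{\rm sPF}(G_i)|$, provided $G_{i-1}$ is connected. The one thing requiring verification is this connectivity, and I would argue it uniformly: in every $G_i$ the induced subgraph on $[n]$ is still the complete (multi)graph $K_n^b$, which is connected because $b \ge 1$, while the root $0$ stays joined to $[n]$ by at least one edge as long as not all of $E$ has been deleted. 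Hence $G_0,\ldots,G_{N-1}$ are all connected, Lemma \ref{Lem2} applies at each of the $N$ steps, and telescoping the equalities gives $|{\rm sPF}(K_{n+1}^{a,b})| = |{\rm sPF}(K_{n+1}^{a,b} - E)|$; specializing to $a=b=1$ yields (1).

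The one delicate point, and the only place where one might fear an obstruction, is the final deletion $G_{N-1}\to G_N$, at which the root becomes isolated: then $d_{[n]}(i)=0$ for all $i$, the generator $m_{[n]}=1$, and $\mathcal{M}_{G_N}$ collapses to all of $R$. I expect this to be harmless rather than obstructive, because Lemma \ref{Lem2} requires only the graph being cut, $G_{N-1}$, to be connected, and its proof establishes $\mathcal{M}_{G'} = (\mathcal{M}_G : x_n)$ together with the induced bijection $\mu_{x_n}$ on spherical monomial bases by a purely algebraic argument with no hypothesis on $G'$; moreover $\mathcal{M}_{G_N}$ and $\mathcal{M}_{G_N}^{(n-2)}$ are defined even for the disconnected graph $G_N$. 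Thus the cardinality equality survives the disconnecting last step and the telescoping is valid throughout.
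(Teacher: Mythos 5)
Your proposal is correct, and for parts (1) and (2) it follows the paper's own route exactly: list the root edges and telescope Lemma \ref{Lem2} along the chain of deletions. Your extra care about connectivity is well placed and correctly resolved — Lemma \ref{Lem2} hypothesizes connectivity only of the graph being cut, so the final deletion, which isolates the root and makes $K_{n+1}^{a,b}-E$ disconnected, is still covered; the paper silently relies on the same observation (and explicitly allows $\mathcal{M}_G$ and $\mathcal{M}_G^{(n-2)}$ for disconnected $G$).

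For part (3) you take a genuinely different route. The paper deduces (3) \emph{from} (2): having deleted $E$, it notes ${\rm PF}(K_{n+1}^{a,b}-E)=\emptyset$, identifies $\mathcal{M}_{K_{n+1}^{a,b}-E}^{(n-2)}$ with the Alexander dual of the multipermutohedron ideal $I(\mathbf{u}^{b}(\mathbf{m}))$ for $\mathbf{u}^{b}(\mathbf{m})=(2b,3b,\ldots,nb,nb)$, and computes $|{\rm sPF}(K_{n+1}^{a,b}-E)|$ as the number of $\lambda$-parking functions for $\lambda=((n-1)b,\ldots,b,b)$, namely $n!\,g_{n;n-2}(b)=b^n(n-1)^{n-1}$. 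You instead work with the intact graph and subtract the two dimension formulas of Corollary \ref{Cor2}, $\bigl(a(a+nb)^{n-1}+(n-1)^{n-1}b^n\bigr)-a(a+nb)^{n-1}$; this is precisely the observation the authors record separately in Remark \ref{Rem1}(3). Your version is shorter and makes (3) independent of (1) and (2); the paper's version buys a second, consistent derivation and showcases the ``delete the root edges so that ${\rm PF}$ vanishes'' technique that it reuses for the bipartite counts in Propositions \ref{Prop4} and \ref{Prop5}. Both arguments are valid and rest on results already established in Section 2.
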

\begin{proof}
 By Lemma \ref{Lem2}, we know that the number of spherical $G$-parking functions and the number of spherical $(G - \{e_0\})$-parking
 functions
 are the same for any edge $e_0$ of $G$ through
the root $0$. 
Now, repeatedly applying Lemma \ref{Lem2}, we see that (1) and (2) hold.

Let $\mathbf{u}^{b}(\mathbf{m}) = 
(2b,3b,\ldots,nb,nb) \in \mathbb{N}^n$. Then 
as described in the Section 2, the 
Alexander dual $I(\mathbf{u}^{b}(\mathbf{m}))^{[\mathbf{(n+1)b-1}]}$ of 
the multipermutohedron ideal $I(\mathbf{u}^{b}(\mathbf{m}))$ with respect to 
$\mathbf{(n+1)b-1} = ((n+1)b-1, \ldots,(n+1)b-1) \in \mathbb{N}^n$ is 
$\mathcal{M}_{K_{n+1}^{a,b} - E}^{(n-2)}$. Also
${\rm PF}(K_{n+1}^{a,b} - E) = \emptyset$, as
$K_{n+1}^{a,b} - E$ is disconnected. Thus
\begin{eqnarray*}
 |{\rm sPF}(K_{n+1}^{a,b})| & = &|{\rm sPF}(K_{n+1}^{a,b} - E)| = 
\dim_{\mathbb{K}}\left( \frac{R}{\mathcal{M}_{K_{n+1}^{a,b} - E}^{(n-2)}} \right) \\
& = & {\rm Number ~of} ~ \lambda{\rm ~parking ~functions~ for}~\lambda =
 ((n-1)b,(n-2)b,\ldots,b,b)\\
& =& (n!) g_{n;n-2}(b) = b^n (n-1)^{n-1},
\end{eqnarray*}
where the polynomial $g_{n;n-2}(x)$ is given in the Remark \ref{Rem1}.
\hfill $\square$
\end{proof}
Let $K_{m+1,n}$ be the complete bipartite graph on  
$ V^{\prime} = \{0,1,\ldots,m\} \coprod \{m+1,\ldots,m+n\}$. Let 
$K_{m+1,n}^{a,b}$ be the complete bipartite multigraph on
$V^{\prime}$ (defined similar to $K_{n+1}^{a,b}$). More precisely, there
are $a$ number of edges in $K_{m+1,n}^{a,b}$
between the root $0$ and $j$, while
$b$ number of edges between $i$ and $j$, where
$i \in \{1,\ldots,m\}$ and $j \in \{m+1,\ldots,m+n\}$.
Let $E$ be the set of all edges of 
$K_{m+1,n}$ or $K_{m+1,n}^{a,b}$
through the root $0$.
\begin{proposition} We have
 $|{\rm sPF}(K_{m+1,n})|= |{\rm sPF}(K_{n+1,m})|$. More generally, 
\[ 
|{\rm sPF}(K_{m+1,n}^{a,b})|= |{\rm sPF}(K_{n+1,m}^{a,b})|. 
\]
\label{Prop4} 
\end{proposition}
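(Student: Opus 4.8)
The plan is to peel off every edge through the root from each of the two graphs, using Lemma \ref{Lem2} to keep the number of spherical parking functions unchanged at each step, and then to recognise that the two resulting rooted graphs are isomorphic. Let $E$ be the set of all edges of $K_{m+1,n}^{a,b}$ incident to the root $0$, and let $E'$ be the corresponding set for $K_{n+1,m}^{a,b}$. Deleting these edges one at a time and applying Lemma \ref{Lem2} at each deletion---exactly as in the proof of Proposition \ref{Prop3}---yields
\[
|{\rm sPF}(K_{m+1,n}^{a,b})| = |{\rm sPF}(K_{m+1,n}^{a,b} - E)|, \qquad
|{\rm sPF}(K_{n+1,m}^{a,b})| = |{\rm sPF}(K_{n+1,m}^{a,b} - E')|.
\]

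The key observation is that the two stripped graphs agree up to a relabelling of the non-root vertices. Each of $K_{m+1,n}^{a,b} - E$ and $K_{n+1,m}^{a,b} - E'$ consists of an isolated root $0$ together with the complete bipartite multigraph whose two colour classes have sizes $m$ and $n$ and in which each pair of vertices from distinct classes is joined by exactly $b$ edges; the parameter $a$ has disappeared because all the $a$-fold root edges were removed. Since this bipartite multigraph is symmetric in its two colour classes, there is a bijection $\sigma$ of the common non-root vertex set $[m+n]$ that carries one stripped graph onto the other and fixes the root. Such a rooted isomorphism induces a permutation of the variables $x_1, \ldots, x_{m+n}$ under which the monomial ideals $\mathcal{M}_{K_{m+1,n}^{a,b} - E}$ and $\mathcal{M}_{K_{n+1,m}^{a,b} - E'}$, as well as their skeleton ideals, correspond, and hence a bijection between the associated sets of spherical parking functions. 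Combining this with the two displayed equalities gives $|{\rm sPF}(K_{m+1,n}^{a,b})| = |{\rm sPF}(K_{n+1,m}^{a,b})|$, and the simple-graph assertion is the special case $a = b = 1$.

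The only step requiring care is the repeated invocation of Lemma \ref{Lem2}, whose hypothesis is that the \emph{larger} graph be connected. This holds throughout: as long as at least one root edge survives, the root is joined to the opposite colour class, which is in turn joined to the remaining class, so the graph is connected; the final deletion has a connected graph on the left and the disconnected isolated-root graph on the right, which is precisely the situation Lemma \ref{Lem2} permits. Beyond this bookkeeping there is no real obstacle, the substance of the argument being the manifest symmetry in $m$ and $n$ of the bipartite multigraph that remains once the root has been stripped.
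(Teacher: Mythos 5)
Your proposal is correct and follows the same route as the paper: repeatedly apply Lemma \ref{Lem2} to strip all root edges from both graphs, then observe that the two stripped graphs (an isolated root plus the $b$-fold complete bipartite multigraph on classes of sizes $m$ and $n$) are isomorphic under a relabelling of the non-root vertices, which identifies the corresponding skeleton ideals and hence the sets of spherical parking functions. Your extra remark on connectivity during the stripping process is a harmless piece of bookkeeping the paper leaves implicit.
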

\begin{proof}
 Let $E$ and $E^{\prime}$ be the set of all edges of
 $K_{m+1,n}^{a,b}$  and $K_{n+1,m}^{a,b}$
 through the root $0$, respectively.
On repeatedly applying the Lemma \ref{Lem2}, we see that 
\[ 
|{\rm sPF}(K_{m+1,n}^{a,b})| = |{\rm sPF}(K_{m+1,n}^{a,b} - E)|
\quad {\rm and} \quad  
|{\rm sPF}(K_{n+1,m}^{a,b})| = |{\rm sPF}(K_{n+1,m}^{a,b} - E^{\prime})|.
\]
Since graphs $K_{m+1,n}^{a,b} - E$ and 
$K_{n+1,m}^{a,b} - E^{\prime}$ are obtained from each other by 
interchanging vertices as $i \leftrightarrow n+i$  and
$m+j \leftrightarrow j$ (for $ i \in [m], j \in [n]$). Thus, 
$|{\rm sPF}(K_{m+1,n}^{a,b} - E)|
= |{\rm sPF}(K_{n+1,m}^{a,b} - E^{\prime})|$.

\hfill $\square$
\end{proof}

We now derive a combinatorial formula for 
$|{\rm sPF}(K_{m+1,n})| = |{\rm sPF}(K_{m+1,n} - E)$, using a free 
(non-minimal) cellular
resolution of the monomial ideal 
$\mathcal{M}_{K_{m+1,n} - E}^{(n-2)}$
supported on the order complex
${\mathbf{\Delta}} = \Delta(\Sigma_{m+n})$ of the Boolean poset
$\Sigma_{m+n}$ of non-empty subsets of $[m+n]$. 
Since $[m+n] = [m] \coprod [m+1,m+n]$,
every $A \in \Sigma_{m+n}$ has a disjoint
decomposition $A = A' \coprod A''$, where
$A' = A \cap [m]$ and $A'' = A \cap [m+1,m+n]$. The monomial label
$\mathbf{x}^{\alpha(A)}$ on the vertex $A$ is given by
\[
 \mathbf{x}^{\alpha(A)} = \begin{cases} \left( \prod_{j \in A'} x_j \right)^{n - |A''|}                
                               \left( \prod_{k \in A''} x_k \right)^{m -|A'|} & 
{\rm if}~ A' \ne [m]~{\rm and}~A'' \ne [m+1, m+n], \\
&\\
\left( \prod_{j \in A'} x_j \right) \left( \prod_{k \in A''} x_k \right) & {\rm otherwise}.
\end{cases}
\]
Define $\mu_{j,A}$ by setting $\mathbf{x}^{\alpha(A)} = \prod_{j \in A} x_j^{\mu_{j,A}}$. 
Clearly, the ideal $I_{{\mathbf{\Delta}}}$ generated by the monomial vertex labels 
$\mathbf{x}^{\alpha(A)}$; $A \in \Sigma_{m+n}$ is the monomial ideal 
$\mathcal{M}_{K_{m+1,n} - E}^{(n-2)}$. Since
$I_{\mathbf{\Delta}} = \mathcal{M}_{K_{m+1,n} - E}^{(n-2)}$
is an order monomial ideal in the sense of Postnikov and Shapiro \cite{PoSh}, 
the free complex $\mathbb{F}_{*}(\mathbf{\Delta})$ supported on the 
order complex ${\mathbf{\Delta}} = \Delta(\Sigma_{m+n})$ is a free resolution
of  $I_{\mathbf{\Delta}} = \mathcal{M}_{K_{m+1,n} - E}^{(n-2)}$.
Using the cellular resolution $\mathbb{F}_{*}(\mathbf{\Delta})$, 
the multigraded Hilbert series 
$H\left(\frac{R}{I_{\mathbf{\Delta}}}, \mathbf{x}\right)$
of $\frac{R}{I_{\mathbf{\Delta}}} = 
\frac{R}{\mathcal{M}_{K_{m+1,n} - E}^{(n-2)}}$ is given by
\begin{eqnarray}
\label{Eqn2} 
H\left(\frac{R}{I_{\mathbf{\Delta}}}, \mathbf{x}\right) & = & 
\frac{\sum_{i=0}^{m+n} (-1)^{i} \sum_{{\emptyset=A_0 \subsetneq A_1
\subsetneq \ldots \subsetneq A_i \subseteq [m+n]}} \prod_{\ell =1}^i 
\left( \prod_{j \in A_{\ell} \setminus A_{\ell-1}} 
x_j^{\mu_{j,A_{\ell}}} 
\right) }{(1-x_1) \cdots (1-x_n)}. 
\end{eqnarray}

For $ 1 \le i \le m+n$, let $\Gamma_i$ be the set of
order pairs $(\mathbf{s}, \mathbf{t})$ of $i$-tuples
$\mathbf{s}=(s_1,\ldots,s_i)$ and $\mathbf{t} = (t_1,\ldots,t_i)$ 
such that $0=s_0 \le s_1 \le \ldots \le s_i$, 
$0=t_0 \le t_1 \le \ldots \le t_i$, $s_i + t_i = m+n$ and
$s_{j-1}+t_{j-1} < s_j+t_j$ for $1 \le j \le i$. For 
$(\mathbf{s}, \mathbf{t}) \in \Gamma_i$, let 
$\mathfrak{T}(\mathbf{s}, \mathbf{t}) = 
\{ j \in [i] : s_j < m ~{\rm and}~ t_j < n \}$ and
set 
\[
 \mu(\mathbf{s}, \mathbf{t}) = 
 \prod_{j \in \mathfrak{T}(\mathbf{s},\mathbf{t})} ~ 
(n-t_j)^{s_j - s_{j-1}} (m-s_j)^{t_j - t_{j-1}}.
\]
 
\begin{proposition}
 The number of spherical $K_{m+1,n}$-parking functions is given by
\[
 |{\rm sPF}(K_{m+1,n})| = \sum_{i=1}^{m+n} (-1)^{m+n-i} 
\prod_{(\mathbf{s}, \mathbf{t}) \in \Gamma_i} { m \choose s_1,s_2-s_1, \ldots,s_i - s_{i-1}}
{ n \choose t_1,t_2-t_1, \ldots, t_i - t_{i-1}} \mu(\mathbf{s}, \mathbf{t}) .
\]
\label{Prop5} 
\end{proposition}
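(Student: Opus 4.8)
The plan is to turn the count into a single dimension computation and then read that dimension off the cellular resolution.

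First I would invoke Lemma \ref{Lem2}. Every edge of $E$ joins the root $0$ to a vertex of $\{m+1,\dots,m+n\}$, so deleting these edges one at a time and applying Lemma \ref{Lem2} repeatedly gives $|{\rm sPF}(K_{m+1,n})| = |{\rm sPF}(K_{m+1,n}-E)|$. The graph $K_{m+1,n}-E$ is disconnected, since the root $0$ (which in the bipartite structure is adjacent only to $\{m+1,\dots,m+n\}$) becomes an isolated vertex; hence $K_{m+1,n}-E$ has no spanning tree and ${\rm PF}(K_{m+1,n}-E)=\emptyset$. Consequently every standard monomial of $\frac{R}{\mathcal{M}_{K_{m+1,n}-E}^{(n-2)}}=\frac{R}{I_{\mathbf{\Delta}}}$ corresponds to a spherical parking function, so that
\[
|{\rm sPF}(K_{m+1,n})| = \dim_{\mathbb{K}}\left(\frac{R}{I_{\mathbf{\Delta}}}\right).
\]

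Next I would extract this dimension from the Hilbert series (\ref{Eqn2}). Writing $H(\mathbf{x})=\mathcal{K}(\mathbf{x})/\prod_{j=1}^{m+n}(1-x_j)$ with $\mathcal{K}$ the alternating chain-sum in the numerator, the dimension is the finite value $H(\mathbf{1})$. To resolve the resulting $0/0$, I would apply, for each variable occurring in a chain monomial, the splitting
\[
\frac{x^{e}}{1-x} = \frac{1}{1-x} - \left(1+x+\cdots+x^{e-1}\right),
\]
which separates a \emph{bounded} contribution (a polynomial whose value at $x=1$ is the box size $e$) from a \emph{free} contribution $1/(1-x)$. Expanding the product over all variables, the only terms finite at $\mathbf{x}=\mathbf{1}$ are those in which each of the $m+n$ variables is chosen bounded; this forces the underlying chain $\emptyset=A_0\subsetneq\cdots\subsetneq A_i$ to reach the full vertex set $A_i=[m+n]$, i.e.\ $s_i=m$ and $t_i=n$. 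Grouping such chains by the cardinality profile $s_{\ell}=|A_{\ell}\cap[m]|$, $t_{\ell}=|A_{\ell}\cap[m+1,m+n]|$ produces exactly the index set $\Gamma_i$; the number of chains with a given profile is $\binom{m}{s_1,\dots,s_i-s_{i-1}}\binom{n}{t_1,\dots,t_i-t_{i-1}}$, and since the exponent attached by $\alpha$ to a newly entering element of $[m]$ (resp.\ of $[m+1,m+n]$) at step $j$ is $n-t_j$ (resp.\ $m-s_j$) precisely when $j\in\mathfrak{T}(\mathbf{s},\mathbf{t})$, the product of the box sizes is $\mu(\mathbf{s},\mathbf{t})$. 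The overall sign is $(-1)^{i}$ from the cellular complex times $(-1)^{m+n}$ from the $m+n$ bounded splittings, i.e.\ $(-1)^{m+n-i}$, and assembling these pieces yields the stated formula.

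The step I expect to be the main obstacle is justifying that the free contributions genuinely drop out, i.e.\ that the indeterminacy at $\mathbf{x}=\mathbf{1}$ is resolved \emph{entirely} by the full-support, all-bounded terms. Finiteness of $H(\mathbf{1})$ is guaranteed a priori because $\frac{R}{I_{\mathbf{\Delta}}}$ is Artinian, but one still has to argue that the poles coming from partial-support chains cancel in the alternating sum rather than leaving a finite residue; this is where the order-complex (order monomial ideal) structure of $I_{\mathbf{\Delta}}$ underlying (\ref{Eqn2}) must be used, for instance by taking the limits $x_j\to 1$ iteratively and checking that each telescopes across the nested subsets of a chain. As consistency checks I would confirm that the resulting expression is invariant under $m\leftrightarrow n$ together with $\mathbf{s}\leftrightarrow\mathbf{t}$, recovering the symmetry $|{\rm sPF}(K_{m+1,n})|=|{\rm sPF}(K_{n+1,m})|$ of Proposition \ref{Prop4}, and that it reproduces the known small-parameter values.
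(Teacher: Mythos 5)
Your argument is correct and follows the paper's proof essentially verbatim: reduce to $\dim_{\mathbb{K}}\left(\frac{R}{I_{\mathbf{\Delta}}}\right)$ via Lemma \ref{Lem2} and the disconnectedness of $K_{m+1,n}-E$, evaluate the Hilbert series (\ref{Eqn2}) at $\mathbf{1}$ so that only full chains $A_i=[m+n]$ survive with weight $\prod_{\ell}\prod_{j}\mu_{j,A_{\ell}}$, and group chains by type $(\mathbf{s},\mathbf{t})$ --- your geometric-sum splitting being just an explicit form of the paper's appeal to L'Hospital's rule. The cancellation you flag as the main obstacle is immediate once you write the numerator of (\ref{Eqn2}) as $H\cdot\prod_{j=1}^{m+n}(1-x_j)$ with $H$ a polynomial: applying $\partial_{x_1}\cdots\partial_{x_{m+n}}$ and evaluating at $\mathbf{1}$ annihilates every product-rule term except the one in which each derivative hits its own factor $(1-x_j)$, giving $(-1)^{m+n}H(\mathbf{1})$ on one side and exactly the all-bounded, full-support chain contribution on the other.
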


\begin{proof}
Since the Artinian quotient $\frac{R}{I_{\mathbf{\Delta}}} = 
\frac{R}{\mathcal{M}_{K_{m+1,n} - E}^{(n-2)}}$ has finitely 
many standard monomials, we have 
\[
 \dim_{\mathbb{K}} \left(\frac{R}{I_{\mathbf{\Delta}}} \right) = 
\dim_{\mathbb{K}} \left(\frac{R}{\mathcal{M}_{K_{m+1,n} - E}^{(n-2)}} \right) = 
H\left(\frac{R}{I_{\mathbf{\Delta}}}, \mathbf{1}\right),
\]
where $\mathbf{1}=(1,\ldots,1)\in \mathbb{N}^n$. Letting 
$\mathbf{x}=(x_1,\ldots,x_n) \rightarrow
(1,\ldots,1) = \mathbf{1}$ in the rational function expression (\ref{Eqn2}) 
of $H(\frac{R}{I_{\mathbf{\Delta}}}, \mathbf{x})$, and applying
L'Hospital's rule, we get 
\[
  \dim_{\mathbb{K}} \left(\frac{R}{I_{\mathbf{\Delta}}} \right) =  
\sum_{i=0}^{m+n} (-1)^{m+n-i} \sum_{{\emptyset=A_0 \subsetneq A_1
\subsetneq \ldots \subsetneq A_i = [m+n]}} \prod_{\ell =1}^i 
\left( \prod_{j \in A_{\ell} \setminus A_{\ell-1}} 
\mu_{j,A_{\ell}} \right),
\]
where the summation runs over all chains 
$\emptyset=A_0 \subsetneq A_1
\subsetneq \ldots \subsetneq A_i = [m+n]$ in $\Sigma_{m+n}$. 
Let $s_j = |A'_j|$ and $t_j = |A''_j|$. Then 
$(\mathbf{s},\mathbf{t}) \in \Gamma_i$. In this case,  the chain
$\emptyset=A_0 \subsetneq A_1
\subsetneq \ldots \subsetneq A_i = [m+n]$ is said to be 
of {\em type} $(\mathbf{s},\mathbf{t}) \in \Gamma_i$
and for such chains, we have $\prod_{\ell =1}^i 
\left( \prod_{j \in A_{\ell} \setminus A_{\ell-1}} 
\mu_{j,A_{\ell}} \right) = \mu(\mathbf{s},\mathbf{t})$. Further, the number 
of chains in $\Sigma_{m+n}$ of type $(\mathbf{s}, \mathbf{t})$ is precisely
${ m \choose s_1,s_2-s_1, \ldots,s_i - s_{i-1}}
{ n \choose t_1,t_2-t_1, \ldots, t_i - t_{i-1}}$.
As the graph $K_{m+1,n} - E$ is disconnected, 
${\rm PF}(K_{m+1,n} - E) = \emptyset $. Thus,
$\dim_{\mathbb{K}} \left(\frac{R}{\mathcal{M}_{K_{m+1,n} - E}^{(n-2)}} \right) = 
|{\rm sPF}({K_{m+1,n} - E})|$. In view of Proposition \ref{Prop3}, 
we get the desired formula. \hfill $\square$
\end{proof}

From Proposition \ref{Prop5}, we clearly have 
$|{\rm sPF}(K_{m+1,n})| = |{\rm sPF}(K_{n+1,m})|$.
Further, proceeding as in Proposition \ref{Prop5}, it can be shown that
$ |{\rm sPF}(K_{m+1,n}^{a,b})| = b^{m+n} |{\rm sPF}(K_{m+1,n})|$.

We now compute the number $|{\rm sPF}(G)|$ of spherical $G$-parking functions
for $G = K_{n+1} - \{e\}$, where $e$ is an edge not through the root $0$.
 We first consider the case $e = e_1$, where 
 $e_1=(1,n)$ is the edge joining $1$ and $n$.
As $\widetilde{{\rm sPF}}(G) \subseteq {\rm PF}(G)$ 
for $G = K_{n+1} - \{ e_1\}$,
on applying Corollary \ref{Cor3}, we get
an injective map $\phi_G : {\rm sPF}(G) \longrightarrow \mathcal{U}_n^{\prime}$,
where $\mathcal{U}_n^{\prime} = \mathcal{U}_{G - \{0\}}$ is the set of uprooted
trees on $[n]$ with  no edge between $1$ and $n$ (i.e., $1 \nsim n$).

\begin{theorem} For $n \ge 3$
 and $G = K_{n+1} - \{e_1\}$, the map 
$\phi_G : {\rm sPF}(G) \longrightarrow
 \mathcal{U}_n^{\prime}$ is a bijection.
\label{Thm5} 
\end{theorem}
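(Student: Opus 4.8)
The plan is to exploit that $\phi_G$ is already injective by Corollary \ref{Cor3}, so only surjectivity remains, and to prove it by exhibiting, for each $T\in\mathcal U_n'$, an explicit spherical parking function mapping to it. Write $H=G-\{0\}=K_n-\{e_1\}$, the complete graph on $[n]$ with the single edge $(1,n)$ deleted, and let $r$ be the root of the uprooted tree $T$. Since $T$ is uprooted every child of $r$ is smaller than $r$, and as a spanning tree on $n\ge 3$ vertices the root has at least one child, so necessarily $r\ge 2$. Viewing $T$ as a spanning tree of $H$ rooted at $r$, Theorem \ref{Thm1} (inverting the Perkinson--Yang--Yu bijection for the simple graph $H$) yields a unique $H$-parking function $\widehat{\mathcal P}:[n]\setminus\{r\}\to\mathbb N$ with $\phi(\widehat{\mathcal P})=T$. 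I would then set $\widetilde{\mathcal P}(r)=0$ and $\widetilde{\mathcal P}(i)=\widehat{\mathcal P}(i)$ for $i\ne r$, and take the candidate preimage to be $\mathcal P=\widetilde{\mathcal P}+\mathbf 1$.

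The first step is a key lemma: $\widehat{\mathcal P}(i)\ge 1$ for every $i>r$. This makes $r$ the largest zero of $\widetilde{\mathcal P}$, which is exactly the vertex $\phi_G$ selects, so that $\phi_G(\mathcal P)$ re-runs the DFS on $H$ rooted at $r$ with input $\widehat{\mathcal P}$ and returns $T$, giving $\phi_G(\mathcal P)=T$. To prove the lemma I would trace the burning on $H$ rooted at $r$: initially only $r$ is on fire, and because $r\ge 2$ no $i>r$ can be the partner of $r$ in the deleted edge $(1,n)$ (that would force $r=1$), so every $i>r$ is adjacent to $r$ in $H$. The fire at $r$ thus meets its neighbours in decreasing order, encountering all vertices $>r$ first; if any such $i$ had $\widehat{\mathcal P}(i)=0$ it would be burnt straight from $r$ and become a child of $r$ larger than $r$, contradicting uprootedness. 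Hence each is dampened and $\widehat{\mathcal P}(i)\ge 1$.

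The main work, and the step I expect to be the real obstacle, is checking $\mathcal P\in{\rm sPF}(G)$, i.e. $\mathbf x^{\mathcal P}\in\mathcal M_G\setminus\mathcal M_G^{(n-2)}$. Membership in $\mathcal M_G$ is immediate: $\mathcal P(i)\ge 1$ for all $i$ forces $m_{[n]}=x_1\cdots x_n$ to divide $\mathbf x^{\mathcal P}$. For $\mathbf x^{\mathcal P}\notin\mathcal M_G^{(n-2)}$ I would use that for every $\emptyset\ne A\subseteq[n]$ one has $d_A^G(i)=1+d_A^{[n]}(i)$, the extra $1$ coming from the edge to the root $0$, so the needed inequality $\mathcal P(i)<d_A^G(i)$ collapses to $\widetilde{\mathcal P}(i)<d_A^{[n]}(i)$, the parking inequality for $H$ read inside $[n]$. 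For proper $A$ with $r\notin A$ this is precisely the parking property of $\widehat{\mathcal P}$ on $H$; for $A\ni r$ one tests $i=r$ and uses $\widetilde{\mathcal P}(r)=0<d_A^{[n]}(r)$, valid whenever $r$ has a neighbour outside $A$.

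The single subset where this argument breaks is the critical case produced by the deleted edge: when $r=n$ and $A=\{2,\dots,n\}$, the vertex $r=n$ has its unique non-neighbour $1$ as the only element outside $A$, so $d_A^{[n]}(n)=0$ and $i=r$ fails. Here I would instead invoke the parking property of $\widehat{\mathcal P}$ on the full complement $B=[n]\setminus\{n\}$: since $a_{1n}=0$ the parking witness cannot be $1$, so some $i\in\{2,\dots,n-1\}$ has $\widehat{\mathcal P}(i)=0$, and then $\widetilde{\mathcal P}(i)=0<a_{i1}=d_A^{[n]}(i)$ rescues the inequality. This is exactly where deleting an edge \emph{not} through the root, rather than through it, alters the analysis, and once it is settled surjectivity follows; combined with the injectivity from Corollary \ref{Cor3}, $\phi_G$ is a bijection.
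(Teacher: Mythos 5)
Your proposal is correct, but for surjectivity it takes a genuinely different route from the paper's. The paper starts from $T\in\mathcal{U}_n^{\prime}$ and pulls it back through the already-established bijection $\phi_n:{\rm sPF}(K_{n+1})\to\mathcal{U}_n$ of Theorem \ref{Thm2}, so the entire content of its argument is the claim that the resulting $\mathcal{P}\in{\rm sPF}(K_{n+1})$ also lies in ${\rm sPF}(G)$; this is proved by contradiction, by supposing some generator $m_A^{\prime}$ of $\mathcal{M}_G^{(n-2)}$ divides $\mathbf{x}^{\mathcal{P}}$ (say $1\in A$, $n\notin A$) and tracing the DFS burning process to conclude that the vertex $1$ could then never be burnt, contradicting that $\phi(\widehat{\mathcal{P}})$ is a spanning tree. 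You instead invert the Perkinson--Yang--Yu bijection directly on $H=K_n-\{e_1\}$ rooted at $r={\rm root}(T)$, so that $\phi_G(\mathcal{P})=T$ holds by construction, and you verify $\mathbf{x}^{\mathcal{P}}\in\mathcal{M}_G\setminus\mathcal{M}_G^{(n-2)}$ by a direct generator-by-generator check: the identity $d_A^G(i)=1+d_A^H(i)$ reduces everything to the parking inequalities for $\widehat{\mathcal{P}}$ on $H$, with the single exceptional subset $A=\{2,\dots,n\}$ when $r=n$ disposed of correctly via the parking condition on $\{1,\dots,n-1\}$ (whose witness cannot be $1$ because $a_{1n}=0$). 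Your preliminary lemma --- $\widehat{\mathcal{P}}(i)\ge 1$ for $i>r$, hence $r$ is the largest zero of $\widetilde{\mathcal{P}}$ and $\phi_G$ really does select it --- is essential and is correctly derived from the depth-first rule together with uprootedness. The trade-off: your route is more self-contained, gets the identity $\phi_G(\mathcal{P})=T$ for free (in the paper this is inherited from $\phi_n(\mathcal{P})=T$, which tacitly requires the DFS runs on $K_n$ and on $H$ to produce the same tree), and makes completely explicit where the deleted edge interacts with the parking inequalities; the paper's route recycles Theorem \ref{Thm2} wholesale at the cost of a longer combinatorial trace of the burning process.
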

\begin{proof}
Let $\mathcal{P} \in {\rm sPF}(G)$ and 
$\widetilde{\mathcal{P}} \in {\rm PF}(G)$ be the 
associated reduced spherical $G$-parking function. Choose
the largest vertex $r$ of $G - \{0\}$ such that 
$\widetilde{\mathcal{P}}(r) = 0$. We claim that 
$\widetilde{\mathcal{P}}(j) = 0 $ for some $j < r$. Otherwise, 
$\mathcal{P}(i) \ge 2, ~\forall~ i \in [n] \setminus \{r\}$, 
a contradiction to $\mathcal{P} \in {\rm sPF}(G)$. Now consider $r$
to be the root of the graph $G' = G - \{0\}$ on $[n]$. Then
$\widehat{\mathcal{P}}   
= \widetilde{\mathcal{P}} \mid_{[n] \setminus \{r\}}$ 
is a $G'$-parking function. On applying Theorem \ref{Thm1},
we get a spanning tree $\phi(\widehat{P})$ of the graph $G'$ with root $r$.
Since $\widehat{P}(i) \ge 1$ for $i >r$, 
all the edges $(r,i)$ are dampened. Hence, 
$\phi(\widehat{P})$ is a uprooted spanning tree of $G$ with root $r$. Define 
$\phi_G(\mathcal{P}) = \phi(\widehat{P})$. Clearly, 
$\phi_G: {\rm sPF}(G) \longrightarrow 
\mathcal{U}_n^{\prime}$ is injective.

We now show that $\phi_G$ is surjective.
 Let $T \in \mathcal{U}_n^{\prime}$ with root $r$. 
From Theorem \ref{Thm2}, the map 
$\phi_n : {\rm sPF}(K_{n+1}) \longrightarrow \mathcal{U}_n$
is bijective. Thus there exists 
$\mathcal{P} \in {\rm sPF}(K_{n+1})$ such that
$\phi_n(\mathcal{P}) = \phi(\widehat{\mathcal{P}}) = T$, where  
$\widetilde{\mathcal{P}}$
is the  reduced spherical parking 
function associated to $\mathcal{P}$ and $\widehat{\mathcal{P}}
= \widetilde{\mathcal{P}} \mid_{[n] \setminus \{r\}}$. 
\\
Claim : $ \mathcal{P} \in {\rm sPF}(G)$.

 Let $\mathcal{M}_{K_{n+1}} = \langle m_A : 
\emptyset \ne A \subseteq [n] \rangle$ and
$\mathcal{M}_{G} = \langle m_A^{\prime} : 
\emptyset \ne A \subseteq [n] \rangle$. Then
$m_A = m_A^{\prime}$ if either $\{1,n\} \subseteq A$ or
 $\{1,n\} \subseteq [n] \setminus A$.
Also, $x_1 m_A^{\prime} = m_A$  if
$\{1,n\} \cap A = \{1\}$ and  $x_n m_A^{\prime} = m_A$ 
if $\{1,n\} \cap A = \{n\}$. If
$ \mathcal{P} \notin {\rm sPF}(G)$, then there 
exists $A \subseteq [n]$ such that
$m_A^{\prime} \mid \mathbf{x}^{\mathcal{P}}$ but 
$m_B \nmid \mathbf{x}^{\mathcal{P}}$ for all 
$\emptyset \ne B \subsetneq [n]$.
We shall assume that $1 \in A$ but $n \notin A$. 
The other case, $n \in A$ but $1 \notin A$ 
is similar. 

Suppose $A = \{i_1,i_2,\ldots,i_t\}$ such that
 $1=i_1 < i_2 < \ldots < i_t < n$. As
$m_A = (\prod_{j \in A} x_j )^{n-t+1}$ and 
$m_A =x_1 m_A^{\prime} \nmid \mathbf{x}^{\mathcal{P}}$, 
we have $\mathcal{P}(1) = n-t$ and 
$\mathcal{P}(i_k) \ge n-t+1$ for $k=2,\ldots,t$.
Let $[n] \setminus A = \{r = j_1,j_2,\ldots,j_s\}$ 
such that $\mathcal{P}(j_1) \le 
\ldots \le \mathcal{P}(j_s)$. Then $s+t =n$. Since
 $\mathcal{P} \in {\rm sPF}(K_{n+1})$, 
we must have $\mathcal{P}(j_1) = 1, 
\mathcal{P}(j_2) < 2, \ldots, \mathcal{P}(j_s) < s$. 
This shows that $\widehat{\mathcal{P}}(j_2) = 0, \ldots, 
\widehat{\mathcal{P}}(j_s) < s-1$,
 $\widehat{\mathcal{P}}(1) =s-1$ and 
$\widehat{\mathcal{P}}(i_k) \ge s$, for $2 \le k \le t$.
Now we apply DFS algorithm to get spanning tree
 $\phi(\widehat{\mathcal{P}})$ with root $r$.
Starting from the root $r$, all the vertices
 $j_2, \ldots, j_s$ get burnt in the first $s-1$
steps. Also, whenever certain edges joining $j_l$ and $i_k$ 
 get dampened, it reduces the value
$\widehat{\mathcal{P}}(i_k)$ by $1$ for $k\ne 1$. Since
 $j_l= n$ for some $l$, after the $(s-1)^{th}$ step
 of DFS algorithm, $\widehat{\mathcal{P}}(1) = s-1$ 
and the reduced values of $\widehat{\mathcal{P}}(i_k)$ are all $\ge 1$
 for $2 \le k \le t$. The value 
$\widehat{\mathcal{P}}(1)$ reduces by at most $1$
 if the search backtracks from $j_l$ to $j_{l-1}$
 for $l \in \{2,\ldots,s\}$.
Again, as $j_l = n$ for some $l$ and $1\nsim n$, we see that
the reduced value of $\widehat{\mathcal{P}}(1)$ is
 $1$ even after the search backtracks
to the root $r=j_1$. Hence, $1$ is not a 
burnt vertex, a contradiction to
$\phi(\widehat{\mathcal{P}}) = T$. 
This proves the claim and the theorem. \hfill $\square$
\end{proof} 
\begin{remarks} {\rm
 \begin{enumerate}
  \item[{\rm (1)}] By renumbering vertices of $G$, we easily see that 
\[|{\rm sPF}(K_{n+1} - \{e\})| = 
|{\rm sPF}(K_{n+1} - \{e_1\})|=
 |\mathcal{U}_n^{\prime}|,
\]
for any edge $e$ between distinct vertices $i,j \in [n]$. 
\item[{\rm (2)}] Let $e=(n-1,n)$ be the
 edge in $K_{n+1}$ joining 
$n-1$ and $n$ and $G = K_{n+1} - \{e\}$. 
For $n \ge 3$, the injective map 
$\phi_G :{\rm sPF}(G) \longrightarrow 
\mathcal{U}_{G - \{0\}}$
need not be surjective.  In fact, 
for $n=4$, 
 $|{\rm sPF}(K_{4+1} - \{e\})| = 12$ but the 
number of  uprooted trees on $[4]$ with no edge between $3$ and $4$ is 
exactly $17$. 
\item[{\rm (3)}] Let $E_1=E(1,n)=\{e_{1n}^0,e_{1n}^1,\ldots,e_{1n}^{b-1}\}$ 
be the set of all edges joining $1$ and $n$ in the complete multigraph
$K_{n+1}^{a,b}$. We recall that $\mathcal{U}_n^b$ is the set of uprooted trees
$T$ on 
$[n]$ with a label $\ell : E(T) \rightarrow \{0,1,\ldots,b-1\}$ on its edges
and a weight $\omega(r) \in \{0,1,\ldots,b-1\}$ on its root $r$. 
Let $ \mathcal{U}_n^{\prime b } = \{ T \in \mathcal{U}_n^b : 1 \nsim n~
{\rm in~ T}\}$.
Then using Theorem \ref{Thm4}, 
the bijection of the Theorem \ref{Thm5} can be extended to a bijection
\[
\phi_n^b : {\rm sPF}(K_{n+1}^{a,b} - E_1) \longrightarrow \mathcal{U}_n^{\prime b}.
\]
 In particular, 
$|{\rm sPF}(K_{n+1}^{a,b} - E_1)| = |\mathcal{U}_n^{\prime b}| = b^n |\mathcal{U}_n^{\prime}|$.
 \end{enumerate}
}
\label{Rem2} 
\end{remarks}

We now determine the number
$|\mathcal{U}_n^{\prime}|$  of uprooted trees on
$[n]$ with $1 \nsim n$.

Let $\mathcal{T}_{n,0}$ be the set of labelled trees 
on $[n]$ such that the root has no child (or son) with smaller
labels. Let $\mathcal{A}_n$ be the set of labelled rooted-trees
on $[n]$ with  a non-rooted leaf $n$. Chauve, Dulucq and Guibert \cite{CDG}
constructed a bijection $\eta : \mathcal{T}_{n,0} \rightarrow \mathcal{A}_n$.
As earlier, let $\mathcal{U}_n$ be the set of uprooted trees on
$[n]$. Also, let $\mathcal{B}_n$ be the set of labelled rooted-trees on 
$[n]$ with a non-rooted leaf $1$. We see that there are bijections
$\mathcal{U}_n \rightarrow \mathcal{T}_{n,0}$ and 
$\mathcal{B}_n \rightarrow  \mathcal{A}_n$ obtained by simply changing
label $i$ to $n-i+1$ for all $i$. The bijection 
$\eta : \mathcal{T}_{n,0} \rightarrow \mathcal{A}_n$ induces a bijection
$\psi : \mathcal{U}_n \rightarrow \mathcal{B}_n$. For sake of 
completeness, we briefly describe construction of the
bijection $\psi$ essentially as in \cite{CDG}. 

Let $T \in \mathcal{U}_n$ with root $r$. Note that $r \ne 1$. \\
Step (1) : Consider a maximal increasing subtree
$T_0$ of $T$ containing $1$. Let $T_1, \ldots, T_l$ be the subtrees 
(with at least one edge) of $T$ obtained by deleting edges in $T_0$.
Let $r_i$ be the root of $T_i$ for $ 1 \le i \le l$. 
The root $r$ of $T$ must be a root of one of the subtrees $T_i$.
Let $r_j=r$. Then $1$ is a leaf of $T_j$. \\
Step (2) : If $T_0$ has $m$ vertices, then $T_0$ is 
determined by an increasing tree $\overline{T_0}$ on
$[m]$ and a set $S_0$ of labels on $T_0$. We write
 $T_0 = (\overline{T_0},S_0)$.\\
Step (3) : Let
$\overline{S_0} = (S_0 \setminus \{1\}) \cup \{r\}$. Then
$(\overline{T_0},\overline{S_0})$ determines an increasing
subtree $\widetilde{T_0}$ with root $r'= \min\{\overline{S_0}\}$.
Graft $T_j$ on the increasing subtree $\widetilde{T_0}$ at the root $r$
and obtain a tree $T_j^{\prime}$. Now graft $T_i$ ($i \ne j$) on 
$T_j^{\prime}$ at $r_i$ and obtain a tree $T^{\prime}$ with root 
$r'$. Also note that $1$ is a non-rooted leaf of $T^{\prime}$.

All the above steps can be reversed, thus $\psi(T) = T^{\prime}$
defines a bijection $\psi : \mathcal{U}_n \rightarrow \mathcal{B}_n$.

\begin{lemma}
$ |\mathcal{U}_n| = |\mathcal{B}_n| = (n-1)^{n-1}$.
\label{Lem3}  
\end{lemma}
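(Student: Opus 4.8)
The plan is to prove the two equalities separately, and the first one is essentially free. The bijection $\psi : \mathcal{U}_n \to \mathcal{B}_n$ constructed just above (obtained from the Chauve--Dulucq--Guibert bijection $\eta : \mathcal{T}_{n,0} \to \mathcal{A}_n$ after the relabeling $i \mapsto n-i+1$) immediately yields $|\mathcal{U}_n| = |\mathcal{B}_n|$. So the substance of the lemma is to show that this common cardinality equals $(n-1)^{n-1}$, and I would do this by counting the more transparent set $\mathcal{B}_n$ of rooted labelled trees on $[n]$ in which the vertex $1$ is a non-rooted leaf.

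To compute $|\mathcal{B}_n|$ I would \emph{delete the leaf} $1$. For $T \in \mathcal{B}_n$ the vertex $1$ is not the root and has no children, so it is joined by a single edge to its parent $p \in \{2,\ldots,n\}$; removing $1$ together with this edge leaves a rooted labelled tree $T'$ on the $(n-1)$-element vertex set $\{2,\ldots,n\}$, whose root (being different from $1$) is unchanged. Conversely, any rooted labelled tree on $\{2,\ldots,n\}$ together with a choice of parent $p \in \{2,\ldots,n\}$ reconstructs a unique tree in which $1$ is a non-rooted leaf. This sets up a bijection between $\mathcal{B}_n$ and the set of pairs consisting of a rooted labelled tree on $\{2,\ldots,n\}$ and a choice of parent vertex $p \in \{2,\ldots,n\}$.

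By Cayley's formula there are $(n-1)^{n-2}$ rooted labelled trees on an $(n-1)$-element set (that is, $(n-1)^{n-3}$ unrooted trees times $n-1$ choices of root), and there are $n-1$ choices for the parent $p$ of $1$. Hence
\[
|\mathcal{B}_n| = (n-1)\cdot (n-1)^{n-2} = (n-1)^{n-1},
\]
and combining this with the bijection $\psi$ gives $|\mathcal{U}_n| = |\mathcal{B}_n| = (n-1)^{n-1}$.

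I do not expect a serious obstacle here; the argument is elementary once $\psi$ is available. The only points requiring care are the routine verifications that the deletion and reattachment maps are mutually inverse — in particular that connectivity and the root are preserved precisely because $1$ is a leaf distinct from the root — and the small-$n$ boundary behaviour (one should assume $n \ge 2$ so that a non-rooted leaf can exist, and then the formula correctly returns $1$ at $n=2$).
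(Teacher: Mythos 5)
Your proposal is correct and matches the paper's own proof essentially verbatim: both use the bijection $\psi$ for the first equality, then count $\mathcal{B}_n$ by attaching (equivalently, deleting) the leaf $1$ to a rooted labelled tree on $\{2,\ldots,n\}$, giving $(n-1)^{n-2}\cdot(n-1)=(n-1)^{n-1}$ via Cayley's formula.
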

\begin{proof} The bijection $\psi : \mathcal{U}_n \rightarrow \mathcal{B}_n$ 
gives  $|\mathcal{U}_n| = |\mathcal{B}_n|$.
 The number of labelled rooted-trees on $\{2,3,\ldots,n\}$ by Cayley's formula is
$(n-1)^{n-2}$. Any tree in $\mathcal{B}_n$ is obtained uniquely
by attaching $1$ to any node
$i$ of a labelled rooted tree on $\{2,3,\ldots,n\}$. Since there are 
exactly $n-1$ possibility for $i$, we have
$|\mathcal{B}_n|=(n-1)^{n-2} (n-1) =(n-1)^{n-1}$. \hfill $\square$ 
\end{proof}

For $n \ge 3$, let $\mathcal{U}_n^{\prime} = \{ T \in \mathcal{U} : 1 \nsim n ~{\rm in~} T \}$. 
We shall determine the image $\psi(\mathcal{U}_n^{\prime}) \subseteq \mathcal{B}_n$
of $\mathcal{U}_n^{\prime}$ under the bijection $\psi : \mathcal{U}_n \rightarrow \mathcal{B}_n$. Let 
$\mathcal{B}_n^{\prime} = \{T' \in \mathcal{B}_n:
1 \nsim n ~{\rm in}~T'\}$. Set 
\begin{eqnarray*}
\mathcal{A} & = & \{T' \in \mathcal{B}_n^{\prime}:
~{\rm root}(T')=r'=n\}, \\
\mathcal{B}^{\prime} & = & \{ T' \in \mathcal{B}_n^{\prime} : ~{\rm root}(T')=r'\neq n ~{\rm with}~
r' \sim n ~{\rm and}~1~{\rm is~a~descendent~of~} n\}, \\
\mathcal{B}^{\prime \prime} & = & \{T' \in 
\mathcal{B}_n^{\prime} : {\rm root}(T')=r' \ne n ~
{\rm with}~r' \nsim n \}.
\end{eqnarray*}

\begin{lemma}
$\psi(\mathcal{U}_n^{\prime}) = \mathcal{A} \coprod 
\mathcal{B}^{\prime} \coprod 
\mathcal{B}^{\prime \prime}$.
 \label{Lem4}
\end{lemma}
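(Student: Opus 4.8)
The plan is to exploit that $\psi\colon \mathcal{U}_n \to \mathcal{B}_n$ is a bijection and that the target $\mathcal{A}\coprod\mathcal{B}'\coprod\mathcal{B}''$ sits inside $\mathcal{B}_n$ as one block of a two-block partition. Write $Q_1 = \mathcal{A}\coprod\mathcal{B}'\coprod\mathcal{B}''$ and $Q_2 = \mathcal{B}_n\setminus Q_1$, and split the source as $\mathcal{U}_n = \mathcal{U}_n'\coprod(\mathcal{U}_n\setminus\mathcal{U}_n')$. First I would reduce the claim to the two inclusions $\psi(\mathcal{U}_n')\subseteq Q_1$ and $\psi(\mathcal{U}_n\setminus\mathcal{U}_n')\subseteq Q_2$. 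Since $\psi$ is a bijection and $Q_1,Q_2$ are disjoint with $Q_1\cup Q_2 = \mathcal{B}_n$, these two inclusions automatically force $\psi(\mathcal{U}_n') = Q_1$, which is exactly the assertion; this route avoids having to prove surjectivity separately.

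For the first inclusion I take $T\in\mathcal{U}_n'$ (so $1\nsim n$ in $T$) and run the construction of $\psi$ step by step, keeping track only of the vertex $n$ and of the pair $\{1,n\}$. Recall that $\psi$ extracts the maximal increasing subtree $T_0$ through $1$, forms the pendant subtrees $T_1,\dots,T_l$, then relabels $1\mapsto r$ inside $T_0$ to produce $\widetilde{T_0}$ with new root $r' = \min\{\overline{S_0}\}$, and finally regrafts the pendant tree $T_j$ (which carries the old root $r$ and has $1$ as a leaf) together with the remaining $T_i$. I would organize the verification by the location of $n$ in $T$: (a) $n$ is the root $r$ of $T$; (b) $n$ lies on $T_0$ but $1\nsim n$, i.e. the path from $1$ to $n$ inside $T_0$ has length at least two; (c) $n$ lies in one of the pendant subtrees $T_i$. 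In each case I read off from the construction the root $r'$ of $T' = \psi(T)$, whether $r'=n$ or $r'\sim n$, and whether $1$ is a descendant of $n$; a direct check then places $T'$ in $\mathcal{A}$, in $\mathcal{B}'$, or in $\mathcal{B}''$ respectively, and in particular shows $1\nsim n$ persists in $T'$.

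For the complementary inclusion I take $T$ with $1\sim n$. Because $1$ is the smallest label and is adjacent to $n$, the edge $\{1,n\}$ must lie in the increasing subtree $T_0$, with $n$ a leaf-child of $1$ there. Tracing the relabeling $1\mapsto r$ and the subsequent regrafting, I would show that $T' = \psi(T)$ either retains an edge $\{1,n\}$ (so $T'\notin\mathcal{B}_n'$ and hence $T'\in Q_2$), or produces a tree whose root $r'\ne n$ satisfies $r'\sim n$ while $1$ fails to be a descendant of $n$ (so $T'$ falls in the excluded set and again lies in $Q_2$). Combined with the reduction above, this yields $\psi(\mathcal{U}_n') = \mathcal{A}\coprod\mathcal{B}'\coprod\mathcal{B}''$.

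I expect the main obstacle to be the separation of $\mathcal{B}'$ from the excluded block $\mathcal{C} = \{T'\in\mathcal{B}_n' : r'\ne n,\ r'\sim n,\ 1\text{ is not a descendant of }n\}$: both have root adjacent to $n$, and the only distinction is whether $1$ ends up a descendant of $n$ in $T'$. Deciding this requires following precisely how the pendant subtree $T_j$ is grafted onto $\widetilde{T_0}$ relative to the position of $n$, and checking that the newly created absence of the edge $\{1,n\}$ is exactly accounted for by this descendant condition. Verifying that the relation $1\nsim n$ is preserved going forward and reflected going backward through the relabeling $1\mapsto r$ is the delicate bookkeeping that the case analysis must pin down.
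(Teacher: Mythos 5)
Your reduction of the lemma to the two inclusions $\psi(\mathcal{U}_n')\subseteq \mathcal{A}\coprod\mathcal{B}'\coprod\mathcal{B}''$ and $\psi(\mathcal{U}_n\setminus\mathcal{U}_n')\subseteq \mathcal{B}_n\setminus(\mathcal{A}\coprod\mathcal{B}'\coprod\mathcal{B}'')$ is sound, and running the argument forward from $T$ (the paper instead pulls each $T'\in\mathcal{B}_n$ back to $T=\psi^{-1}(T')$ and decides when $T\in\mathcal{U}_n'$) is a legitimate variant. The genuine gap is in the case analysis you propose for the first inclusion: the trichotomy ``$n=r$'', ``$n\in T_0$'', ``$n$ in a pendant subtree'' does \emph{not} align with $\mathcal{A},\mathcal{B}',\mathcal{B}''$. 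Take $n=4$ and $T$ the path $4\to 2\to 1\to 3$ rooted at $4$: here $n$ is the root, but $T_0=\{1,3\}$, $\overline{S_0}=\{3,4\}$, $\widetilde{T_0}$ is $3\to 4$, and $\psi(T)$ is the path $3\to 4\to 2\to 1$ rooted at $3$, which lies in $\mathcal{B}'$, not $\mathcal{A}$. Similarly for $T=3\to 1\to 2\to 4$ one has $n=4\in T_0$ with $1\nsim 4$, yet $\psi(T)$ (root $2$, edges $2\to 3$, $3\to 4$, $3\to 1$) has $r'=2\nsim 4$ and lies in $\mathcal{B}''$, not $\mathcal{B}'$. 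The correct dividing lines are different: $\psi(T)\in\mathcal{A}$ exactly when $1$ is a leaf of $T$ \emph{and} $r=n$, since only then is $r'=\min(\{r\}\cup{\rm Son}_T(1))$ equal to $n$; and the split among $\mathcal{B}'$, $\mathcal{B}''$ and the excluded block is governed by whether $n$ becomes adjacent to the new root $r'$ and whether $n$ is an ancestor of $1$ in $T$ --- neither of which is determined by whether $n$ sits in $T_0$ or in a pendant piece. So the ``direct check'' you defer would fail as stated, and the cases must be reorganized around these invariants (which is essentially what the paper does, isolating the one delicate case $r'\ne n$, $r'\sim n$ and showing membership in $\psi(\mathcal{U}_n')$ there is equivalent to $1$ being a descendent of $n$ in $T'$).

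A second, smaller slip occurs in the complementary inclusion: you assert that if $1\sim n$ in $T$ then the edge $\{1,n\}$ must lie in $T_0$ with $n$ a child of $1$. This is false when $n={\rm par}_T(1)$: in that case the edge lies in the pendant subtree $T_j$ and survives into $T'$ (which is precisely the situation giving $T'\notin\mathcal{B}_n'$), whereas when $n\in{\rm Son}_T(1)$ the edge is destroyed and replaced by the edge joining $r'$ and $n$ in $\widetilde{T_0}$. Your two listed outcomes do cover both situations, but the justification offered for them does not; the needed observation (also the one underlying the paper's first step) is that the unique neighbour of the leaf $1$ in $T'$ is ${\rm par}_T(1)$, while the sons of $1$ get detached from $1$ by the relabelling $1\mapsto r'$.
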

\begin{proof}
 Let $T' \in \mathcal{B}_n$. Then there is a unique $T \in \mathcal{U}_n$ such that
$T' = \psi(T)$. Let $r$ and $r'$ be the roots of $T$ and $T'$, respectively. Clearly,
$r \ne 1$. Let ${\rm Son}_T(1)$ be the set of sons of $1$ in $T$. Then from the construction
 of $T'=\psi(T)$, $r' = \min\{ \{r\} \cup {\rm Son}_T(1)\}$. Also, the leaf $1$ in $T'$ is
adjacent to $j$ if and only if $j = {\rm par}_T(1)$ is the parent of $1$ in $T$. This 
shows that $1 \nsim n$ in $T$ if and only if $1 \nsim n$ in $T'$. Hence, $\psi(\mathcal{U}_n^{\prime})
\subseteq \mathcal{B}_n^{\prime}$. 
Further, we see that 
$r' = n$ if and only if $1$ is already a leaf in $T$, and in this 
case, $T' = \psi(T) = T$. 
In other words, $\mathcal{A} \subseteq 
\mathcal{U}_n^{\prime}$ and $\psi(T) = T$ for all
$T \in \mathcal{A}$. 

If $T' \in \mathcal{B}^{\prime \prime}$, then 
the unique $T \in \mathcal{U}_n$ with $\psi(T) = T'$
must have $1 \nsim n$ in $T$, that is, $T \in 
\mathcal{U}_n^{\prime}$. Now we consider the remaining case. Let $T' \in \mathcal{B}_n^{\prime}$ with
${\rm root}(T')=r' \ne n $ and $ r' \sim n$ in 
$T'$. We shall show that $\psi(T) = T'$ for $T \in 
\mathcal{U}_n^{\prime}$ if and only if 
$1$ is a descendent of $n$ in $T'$
(or equivalently, $T' \in \mathcal{B}^{\prime}$). Consider the
maximal increasing subtree $T_0^{\prime}$ of $T'$
containing the root $r'$. If $1$ is a descendent 
of a leaf $r_j'$ of $T_0^{\prime}$, then the maximal
increasing subtree $T_0$ of $T$ containing $1$ 
is obtained by replacing $r_j'$ with $1$ in the vertex set of $T_0^{\prime}$ and labeling it as indicated
 in Step (2) of the construction of $\psi$.
 Clearly, $r_j' = r$ is the root of $T$. If $r_j'=r
 \ne n$, then $1 \sim n$ in $T$ as $r' \sim n$ in $T'$.
 Thus, if $r_j' \ne n$, i.e., $1$ is not a descendent
 of $n$ in $T'$, then $T' \notin 
 \psi(\mathcal{U}_n^{\prime})$. On the other hand,
 if $r_j' = n$, i.e., $1$ is a 
 descendent of $n$ 
 in $T'$ with $1 \nsim n$, then ${\rm root}(T) = r=n$ and $1 \nsim n$ in
 $T$. 
\hfill $\square$
\end{proof}
\begin{proposition} For $n \ge 3$, we have
 $|\mathcal{U}_n^{\prime}| = (n-1)^{n-3}(n-2)^2 $.
\label{Prop6} 
\end{proposition}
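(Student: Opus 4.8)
The plan is to work entirely on the $\mathcal{B}_n$ side of the bijection $\psi\colon \mathcal{U}_n \to \mathcal{B}_n$ and to exploit Lemma~\ref{Lem4}. Observe first that the set $\mathcal{B}_n^{\prime}$ of all trees in $\mathcal{B}_n$ with $1 \nsim n$ is partitioned into four blocks according to the position of the root and of $n$: the block $\mathcal{A}$ (root $=n$); the block $\mathcal{B}^{\prime\prime}$ (root $\neq n$ and root $\nsim n$); and the two blocks with root $r' \neq n$ but $r' \sim n$ (so that $n$ is necessarily a child of $r'$), split according to whether $1$ is a descendant of $n$ (block $\mathcal{B}^{\prime}$) or not (a fourth block, call it $\mathcal{C}$). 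Since Lemma~\ref{Lem4} identifies $\psi(\mathcal{U}_n^{\prime}) = \mathcal{A} \coprod \mathcal{B}^{\prime} \coprod \mathcal{B}^{\prime\prime}$, and $\psi$ is a bijection, we obtain the clean reduction
\[
|\mathcal{U}_n^{\prime}| = |\mathcal{B}_n^{\prime}| - |\mathcal{C}|.
\]
This avoids having to evaluate $|\mathcal{B}^{\prime}|$ and $|\mathcal{B}^{\prime\prime}|$ individually, each of which otherwise leads to an Abel-type summation.

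Next I would count the two remaining cardinalities by elementary tree enumeration. A tree in $\mathcal{B}_n$ is built by choosing a rooted labelled tree on $\{2,\dots,n\}$ (there are $(n-1)^{n-2}$ of these, by Cayley's formula) and then attaching the leaf $1$ to one of its $n-1$ vertices; the condition $1 \nsim n$ forces $1$ to be attached to a vertex other than $n$, leaving $n-2$ choices, so $|\mathcal{B}_n^{\prime}| = (n-1)^{n-2}(n-2)$. For $\mathcal{A}$ one further fixes the root to be $n$: the number of rooted trees on $\{2,\dots,n\}$ with root $n$ is $(n-1)^{n-3}$, and again $1$ is attached to one of the $n-2$ vertices $\neq n$, giving $|\mathcal{A}| = (n-1)^{n-3}(n-2)$.

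The crux is then to show $|\mathcal{C}| = |\mathcal{A}|$, which I would do by an explicit re-rooting bijection rather than by summation. Given $T \in \mathcal{C}$ with root $r'$ and $n$ a child of $r'$, re-root the (edge-unchanged) underlying tree at $n$; since re-rooting preserves all edges, it keeps $1$ a non-rooted leaf and keeps $1 \nsim n$, so the result lies in $\mathcal{A}$. Conversely, given $S \in \mathcal{A}$ with root $n$, the hypothesis $1 \nsim n$ forces the root-to-$1$ path in $S$ to have length at least two, so it begins with a well-defined child $c \neq 1$ of $n$; re-rooting $S$ at $c$ returns a tree with root $c \neq n$, with $n$ a child of $c$, and with $1$ reachable from $c$ without passing through $n$ (hence $1$ not a descendant of $n$), i.e.\ a member of $\mathcal{C}$. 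The main point to verify carefully is that these two operations are mutually inverse: in the first direction the old root $r'$ is exactly the child of $n$ lying on the new root-to-$1$ path, which is the child selected by the reverse map. Granting $|\mathcal{C}| = |\mathcal{A}| = (n-1)^{n-3}(n-2)$, the reduction above yields
\[
|\mathcal{U}_n^{\prime}| = (n-1)^{n-2}(n-2) - (n-1)^{n-3}(n-2) = (n-1)^{n-3}(n-2)\bigl[(n-1)-1\bigr] = (n-1)^{n-3}(n-2)^2,
\]
as claimed. The only delicate points I anticipate are confirming that the four blocks genuinely exhaust $\mathcal{B}_n^{\prime}$ (so that the complement of $\psi(\mathcal{U}_n^{\prime})$ inside $\mathcal{B}_n^{\prime}$ is precisely $\mathcal{C}$) and checking that the re-rooting bijection respects all the defining root, adjacency, and descendant conditions.
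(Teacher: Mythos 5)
Your proposal is correct and is essentially the paper's own argument: the same partition of $\mathcal{B}_n^{\prime}$ by the position of the root and of $n$, the same Cayley-formula counts, and the same re-rooting bijection between $\mathcal{A}$ and the block of trees with root $r^{\prime} \neq n$, $r^{\prime} \sim n$, and $1$ not a descendant of $n$. The only difference is cosmetic bookkeeping --- you compute $|\mathcal{B}_n^{\prime}|$ and subtract that fourth block, whereas the paper identifies $|\mathcal{U}_n^{\prime}|$ with the count of all trees in $\mathcal{B}_n^{\prime}$ whose root is not $n$ --- and both give $(n-1)^{n-3}(n-2)^2$.
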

\begin{proof} By Lemma \ref{Lem4}, we have 
$|\mathcal{U}_n^{\prime}|= |\psi(\mathcal{U}_n^{\prime})| =
|\mathcal{A}| + |\mathcal{B}^{\prime}| 
+ |\mathcal{B}^{\prime \prime}|$.
 First we enumerate the subset
 $\mathcal{A} = \{ T' \in \mathcal{B}_n^{\prime} : 
~{\rm root}(T')=r'=n\}$.
The number of labelled trees on $\{2,3,\ldots,n\}$ with root $n$ is $(n-1)^{n-3}$. 
Since any tree in 
$\mathcal{A}$ is uniquely obtained by attaching $1$ to any node $i \in \{2, \ldots, n-1\}$
of a labelled tree on $\{2,\ldots,n\}$ with root $n$, we have
$|\mathcal{A}| = (n-1)^{n-3}(n-2)$. 

Let us consider the subset $\mathcal{C} =\{ T' \in \mathcal{B}_n^{\prime} : {\rm root}(T')= r'\ne n \}
\subseteq \mathcal{B}_n^{\prime}$.
Clearly, $\mathcal{B}= \mathcal{B}^{\prime} \coprod
\mathcal{B}^{\prime \prime} \subseteq \mathcal{C}$.
 The enumeration of $\mathcal{C}$ is similar
to that of $\mathcal{A}$, except
now the root $r' \in \{2,\ldots,n-1\}$ can take any one of the 
$n-2$ values. Thus $|\mathcal{C}| = (n-1)^{n-3}(n-2)^2$. We
can easily construct a 
bijective correspondence between $\mathcal{A}$ and $\mathcal{C} \setminus \mathcal{B}$.
Let $T'\in \mathcal{A}$. Then $1 \nsim n$ in $T'$ and ${\rm root}(T')=n$. 
Consider the unique path from the root $n$ to the 
leaf $1$ in $T'$. As $ 1 \nsim n$ in $T'$, the 
child $\tilde{r}$ of $n$ lying on this unique path is
different from $1$. Let $\tilde{T'}$ be rooted tree consisting of the tree $T'$ with the new 
root $\tilde{r}$. As ${\rm root}(\tilde{T'}) = 
\tilde{r} \ne n, ~\tilde{r} \sim n$ and 
$1$ is not a descendent of $n$ in $\tilde{T'}$,
we have  $\tilde{T'} \in \mathcal{C} \setminus 
\mathcal{B}$.
The mapping $T' \mapsto \tilde{T'}$
 from $\mathcal{A}$ to $\mathcal{C} \setminus \mathcal{B}$ is clearly a bijection.
 If $\tilde{T'} \in 
 \mathcal{C} \setminus \mathcal{B}$, then 
 ${\rm root}(\tilde{T'}) = \tilde{r} \ne n, ~\tilde{r}
 \sim n$ and $1$ is not a descendent of $n$ in 
 $\tilde{T'}$. Now unique $T' \in 
 \mathcal{A}$ that maps to $\tilde{T'}$
 is the rooted tree obtained from $\tilde{T'}$ 
by taking $n$ as the new
 root. Thus  
$|\mathcal{A}| = |\mathcal{C} \setminus \mathcal{B}|$ and hence,
$|\mathcal{U}_n^{\prime}| = |\mathcal{C}| = (n-1)^{n-3}(n-2)^2$.
\hfill $\square$
\end{proof}

\begin{theorem}
 Let $e$ be an edge of $K_{n+1}$ joining distinct
vertices $i, j \in [n]$. For $n \ge 3$, 
the number of spherical parking functions of $K_{n+1} \setminus \{e\}$  is given by
\[|{\rm sPF}(K_{n+1} \setminus \{e\})| = 
 |\mathcal{U}_n^{\prime}| = (n-1)^{n-3}(n-2)^2.
\]
\label{Thm6} 
\end{theorem}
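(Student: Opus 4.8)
The plan is to recognize Theorem \ref{Thm6} as a synthesis of three results already in hand: the DFS bijection of Theorem \ref{Thm5}, the vertex-symmetry of the complete graph recorded in Remark \ref{Rem2}(1), and the enumeration of Proposition \ref{Prop6}. The quantity $|{\rm sPF}(K_{n+1} \setminus \{e\})|$ a priori depends on the chosen non-root edge $e$, so the first thing I would establish is that it does not. Since $e$ joins two distinct vertices $i,j \in [n]$, I would pick a permutation $\sigma \in \mathfrak{S}_n$ with $\sigma(i)=1$ and $\sigma(j)=n$; regarding $\sigma$ as an automorphism of $K_{n+1}$ fixing the root $0$, it carries $K_{n+1}\setminus\{e\}$ isomorphically onto $K_{n+1}\setminus\{e_1\}$, where $e_1=(1,n)$. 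The induced relabeling $x_l \mapsto x_{\sigma(l)}$ is a ring automorphism of $R$, and because the degree $d_A(l)$ of a vertex $l$ relative to a set $A$ in $K_{n+1}\setminus\{e\}$ equals $d_{\sigma(A)}(\sigma(l))$ in $K_{n+1}\setminus\{e_1\}$, this automorphism sends the generator of $\mathcal{M}_{K_{n+1}\setminus\{e\}}^{(k)}$ indexed by $A$ to the generator indexed by $\sigma(A)$, for every $k$ simultaneously.

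With the symmetry reduction in place, the second step invokes Theorem \ref{Thm5}: for $n \ge 3$ and $G=K_{n+1}\setminus\{e_1\}$ the map $\phi_G : {\rm sPF}(G) \longrightarrow \mathcal{U}_n^{\prime}$ is a bijection, so that $|{\rm sPF}(K_{n+1}\setminus\{e_1\})| = |\mathcal{U}_n^{\prime}|$. The third step simply quotes Proposition \ref{Prop6}, which evaluates $|\mathcal{U}_n^{\prime}| = (n-1)^{n-3}(n-2)^2$. Chaining the three equalities $|{\rm sPF}(K_{n+1}\setminus\{e\})| = |{\rm sPF}(K_{n+1}\setminus\{e_1\})| = |\mathcal{U}_n^{\prime}| = (n-1)^{n-3}(n-2)^2$ yields the claim.

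Since every analytic ingredient has already been proved, no genuinely new argument is required, and the only point demanding care is the symmetry reduction itself. The hard part will be checking that the relabeling respects the \emph{full chain} of skeleton ideals rather than just the top ideal $\mathcal{M}_G$: I must confirm that $x_l \mapsto x_{\sigma(l)}$ preserves the grading by subset size $|A| \le k+1$, so that it carries $\mathcal{M}_{K_{n+1}\setminus\{e\}}^{(n-2)}$ onto $\mathcal{M}_{K_{n+1}\setminus\{e_1\}}^{(n-2)}$ as well. Once this is verified, the automorphism descends to a $\mathbb{K}$-linear isomorphism of quotients $\mathcal{M}_G/\mathcal{M}_G^{(n-2)} \cong \mathcal{M}_{\sigma(G)}/\mathcal{M}_{\sigma(G)}^{(n-2)}$, and since the monomials $\mathbf{x}^{\mathcal{P}}$ with $\mathcal{P} \in {\rm sPF}$ form the monomial basis of this quotient, it restricts to a bijection between the two spherical sets, giving the equality of cardinalities exactly. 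The theorem then follows immediately.
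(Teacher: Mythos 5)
Your proposal is correct and follows essentially the same route as the paper: the authors likewise reduce to the edge $e_1=(1,n)$ by renumbering vertices (Remark \ref{Rem2}(1)), then invoke the bijection of Theorem \ref{Thm5} and the count $|\mathcal{U}_n^{\prime}|=(n-1)^{n-3}(n-2)^2$ from Proposition \ref{Prop6}. Your explicit verification that the relabeling automorphism preserves each skeleton ideal $\mathcal{M}_G^{(k)}$ is a careful spelling-out of what the paper leaves implicit in the phrase ``by renumbering vertices.''
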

\begin{proof}
 In view of Theorem \ref{Thm5} and Remarks \ref{Rem2}, the result follows. \hfill $\square$ 
\end{proof}

 {\bf Acknowledgements} : The second author is thankful to  
 MHRD, Government of India for financial support.

\end{document}